\newtheorem{thm}{Theorem}[section]
\newtheorem{cor}[thm]{Corollary}
\newtheorem{conj}[thm]{Conjecture}
\newtheorem{lem}[thm]{Lemma}
\newtheorem{prop}[thm]{Proposition}
\newtheorem{rmk}[thm]{Remark}
\newcommand\thankssymb[1]{\textsuperscript{\@fnsymbol{#1}}}
\begin{document}

\title{Infinitude of palindromic almost-prime numbers}

\author[Aleksandr Tuxanidy]{Aleksandr Tuxanidy \thankssymb{2}}
\thanks{\thankssymb{2} Corresponding author}

\author{Daniel Panario}

\iffalse
\subjclass[2020]{Primary 11N36; Secondary 11B25, 11J71, 11K06, 11K31, 11K36, 11K38, 11L03, 11L05, 11L07, 11L15, 11N69, 11T23.\\}
\fi

\address{School of Mathematics and Statistics, Carleton University, 1125 Colonel By Drive, Ottawa, Ontario, K1S 5B6, Canada}
\email{AleksandrTuxanidyTor@cmail.carleton.ca, daniel@math.carleton.ca}

	\begin{abstract}
	It is proven that, in any given base, there are infinitely many palindromic numbers having at most six prime divisors, each relatively large. The work involves equidistribution estimates for the palindromes in residue classes to large moduli, offering upper bounds for moments and averages of certain products closely related to exponential sums over palindromes. 
\end{abstract}

	\maketitle
	
	\setcounter{tocdepth}{1}

\tableofcontents

%%%%%%%%%%%%%%%%%%%%%%%%%%%%%%%%%%%%%%%%%%%%%%%%%%%%%%%%%%%%%%%%%%%%%%%%%%%%%%%%%%%%%%%%%%%%%%%%%%%%%%%%%%%%%%%%%%%%%%%%%%%%%%%%%%%%%%%%%%%%%%%%%%%%%%%%%%%%%%%%%%%%%%%%%%%%%%%%%%%%%%%%%%%%%%%%%%%%

\section{Introduction}

\subsection{Tattarrattat}

Generally speaking, a {\em palindrome} is a finite sequence of objects which reads the same forwards as backwards. Any such sequence is said to be {\em palindromic}. For example the sequence of the (decimal) digits of the prime number
$$9609457639843489367549069$$ 
is a palindrome. According to SigmaGeek \cite{SigmaGeek} (see also Iwao \cite{Google}) it is the largest known palindromic prime appearing in the decimal expansion of the digits of $\pi$. 

One of the earliest historical palindromic objects, a SATOR square, was found in the remains of the ancient city of Pompeii (see for instance Sheldon \cite{Sheldon}) destroyed in the year 79 AD after the eruption of Mount Vesuvius. Palindromes appear in several facets of human endeavor, for instance in music (e.g. Joseph Haydn's Symphony No. 47, ``The Palindrome") in literature (e.g. Georges Perec's ``Le Grand Palindrome''). In recent decades, palindromes are studied intensively in various areas of research \cite{Banks, Banks-Hart-Sakata, Banks-Shparlinski, Cilleruelo, Col, Fici, Gao, Garefalakis, Gusfield, Irving, Porto, Rajasekaran}.
They also appear in nature; for example, unusually large portions of the human X and Y chromosomes are palindromic (in a slightly different sense; see Larionov-Loskutov-Ryadchenko \cite{Larionov}). 

\subsection{Conjecture and previous work} 

From here on out, $b$ denotes an integer larger than $1$. One says that a natural number $n$ is {\em $b$-palindromic}, or is a {\em $b$-palindrome}, if the sequence of its $b$-adic digits is palindromic. When the context is clear we simply say that $n$ is palindromic (or a palindrome).

Palindromic numbers belong, more broadly, in the category of sequences and functions defined in terms of digit constraints in integer bases. 
These have been studied abundantly in several studies. We mention for example the work of Bourgain \cite{Bourgain} and Swaenepoel \cite{Swaenepoel} on primes with prescribed digits, the recent work of Dartyge et al \cite{Dartyge} on reversible primes, Mauduit-Rivat \cite{maud} on the sum of digits of primes, and Maynard \cite{Maynard} on primes with missing digits.
We refer the reader to Dartyge et al \cite{Dartyge} for several other interesting works and references therein. With regards to palindromic numbers, we mention for instance the work of Banks-Hart-Sakata \cite{Banks-Hart-Sakata} showing that almost all palindromes are composite (see (\ref{eqn: BHS density}) below) Banks-Shparlinski \cite{Banks-Shparlinski} on palindromes with many prime factors, Cilleruelo-Luca-Baxter \cite{Cilleruelo} showing that if $b \geq 5$, every positive integer can be written as a sum of at most three $b$-palindromes, Col \cite{Col} on the distribution of palindromes in residue classes and palindromic almost-primes (see Theorems \ref{thm: Col}, \ref{thm: Col equi estimate} and (\ref{eqn: Col density}) below) and Irving \cite{Irving} giving a lower bound for the number of $b$-palindromic semiprimes in the interval $(b^2,b^3)$ with $b$ sufficiently large.

On the topic of palindromic numbers, perhaps one of the most difficult unsolved problems is the following conjecture (see for instance the concluding remarks in Banks-Hart-Sakata \cite{Banks-Hart-Sakata} and Banks-Shparlinski \cite{Banks-Shparlinski}). First for a real number $x \geq 1$, we let $\mathscr{P}_b(x)$ be the set of all positive $b$-palindromic integers at most $x$ in size.

\begin{conj}\label{conj: pal primes}
	Let $b \geq 2$ be an integer. Then there are infinitely many prime numbers that are $b$-palindromic. In fact
	$$
	\#\left\{p \in \mathscr{P}_b(x) \right\} \asymp_b  \dfrac{\#\mathscr{P}_b(x)}{\log x}
	$$ 
	for $x$ large.
\end{conj}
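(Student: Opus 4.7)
The plan is to attempt Conjecture~\ref{conj: pal primes} by emulating, in the palindromic setting, the program that establishes primes in arithmetic progressions and primes in thin digit-restricted sets: a two-pronged attack combining a Bombieri--Vinogradov type equidistribution theorem for $\mathscr{P}_b^*$ with a Vaughan or Heath-Brown identity that decomposes $\Lambda \cdot \mathbf{1}_{\mathscr{P}_b^*}$ into bilinear pieces. The upper bound $\#\{p \in \mathscr{P}_b(x)\} \ll_b \#\mathscr{P}_b(x)/\log x$ is the easier half, reducible to a Selberg or Brun upper-bound sieve applied to $\mathscr{P}_b^*(x)$ once one has modest control over $\mathscr{P}_b^*(x,0,p)$ for $p \leq x^{1/4}$. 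The asymptotic $\asymp_b$ in the conjecture then follows from matching this upper bound with a lower bound of the same order.

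For the lower bound, the first step is to prove a Siegel--Walfisz analogue: for every $A>0$, every modulus $q \leq (\log x)^A$ with $(q,b^3-b)=1$, and every $a$ with $(a,q)=1$,
$$
\#\mathscr{P}_b^*(x,a,q) = \frac{\#\mathscr{P}_b^*(x)}{\varphi(q)} + O_{A,b}\!\left(\#\mathscr{P}_b^*(x)(\log x)^{-A}\right).
$$
The input is an $\ell^\infty$ bound on the exponential sum $\sum_{n \in \mathscr{P}_b^*(x)} e_q(an)$, which via the Fourier representation of the palindrome indicator reduces to pointwise bounds on $|\Phi_N(a/q)|$ in the notation of \eqref{eqn: def of Phi}. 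Second, I would promote this to a Bombieri--Vinogradov inequality averaged over $q \leq Q$, aiming for a \emph{level of distribution} $\theta$ with $Q = (\#\mathscr{P}_b^*(x))^{\theta-\epsilon}$. Since $\#\mathscr{P}_b^*(x) \asymp_b x^{1/2}$, pushing $\theta$ past $1/2$ is the decisive analytic task; the moment bounds for $\Phi_N$ developed earlier in the paper provide the starting point, and the objective is to iterate Cauchy--Schwarz against the product structure of $\Phi_N$ until the resulting moduli range crosses the threshold.

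With $\theta > 1/2$ secured, a Heath-Brown identity reduces the prime count on $\mathscr{P}_b^*$ to Type I sums $\sum_{d} a_d\, \#\mathscr{P}_b^*(x/d,0,d)$, directly controlled by the equidistribution above, and Type II bilinear sums
$$
T(M,N) := \sum_{m \asymp M}\sum_{n \asymp N} \alpha_m \beta_n\, \mathbf{1}_{mn \in \mathscr{P}_b^*},
$$
with $MN \asymp x$ and $M,N$ constrained to a balanced range dictated by $\theta$. Expanding the palindrome indicator into its Fourier integral against $\Phi_N$, the Type II sum becomes an iterated bilinear exponential sum, to be attacked by Cauchy--Schwarz in $m$ followed by van der Corput differencing in $n$, exploiting the factorization of $\Phi_N$ as a product over digit positions to absorb the diagonal contribution.

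\textbf{The main obstacle.} The palindromic condition couples the most significant and least significant $b$-adic digits of $n$ simultaneously, not a contiguous block: when one writes $n = mn'$, the constraint ``$mn'$ is a palindrome'' binds the top digits of the product (dominated by the larger factor) to the bottom digits (dominated by the smaller), with no apparent mechanism for separating the variables. This is qualitatively different from the sum-of-digits setting of Mauduit--Rivat and the digit-restriction setting of Maynard, where the digital constraint is local in digit position and thereby compatible with bilinear decoupling. In consequence the Type II sums resist the standard Cauchy--Schwarz-then-Weyl-van-der-Corput treatment, and in my estimation this is precisely why Conjecture~\ref{conj: pal primes} remains open. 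Absent a genuinely new input --- for instance an arithmetic regularity or inverse theorem for the palindrome indicator, or a productive use of the digit-reversal involution $n \mapsto \mathrm{rev}(n)$ as an additive-combinatorial symmetry relating $mn'$ to $n' m$ in reversed base --- my plan would at best yield a Chen-type lower bound $\#\{n \in \mathscr{P}_b^*(x) : \Omega(n) \leq C\} \gg_b \#\mathscr{P}_b^*(x)/\log x$ for some small constant $C$, and will fall short of isolating honest primes with the claimed asymptotic order.
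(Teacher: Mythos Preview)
The statement you are attempting is labeled a \emph{conjecture} in the paper, and the paper does not prove it: immediately after stating it the authors write that ``Conjecture~\ref{conj: pal primes} seems far out of reach with current methods.'' So there is no proof in the paper for your proposal to be compared against.

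Your own diagnosis in the final paragraph is accurate and is essentially the reason the problem remains open. The Type~II sums are the genuine obstruction: the palindromic constraint links the top and bottom halves of the digit string, so writing $n = mn'$ does not separate the constraint into a factor depending on $m$ alone and one on $n'$ alone, and Cauchy--Schwarz followed by differencing does not decouple the variables the way it does in Mauduit--Rivat or Maynard. Your fallback prediction --- a Chen-type almost-prime result $\#\{n \in \mathscr{P}_b^*(x) : \Omega(n) \le C\} \gg_b \#\mathscr{P}_b^*(x)/\log x$ --- is precisely what the paper \emph{does} prove, namely Theorem~\ref{thm: at most 6 primes} with $C=6$, obtained from the equidistribution Theorem~\ref{thm: equidistribution} (level of distribution $x^{1/5-\epsilon}$) fed into a weighted linear sieve. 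No bilinear Type~II estimate is attempted.

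Two smaller points on your sketch. First, the Siegel--Walfisz analogue should have main term $\#\mathscr{P}_b^*(x)/q$, not $\#\mathscr{P}_b^*(x)/\varphi(q)$: palindromes coprime to $b^3-b$ equidistribute over \emph{all} residue classes modulo $q$ when $(q,b^3-b)=1$, as in Theorem~\ref{thm: equidistribution}, not just the invertible ones. Second, your normalization of the level of distribution is nonstandard: the paper measures $Q$ against $x$, not against $\#\mathscr{P}_b^*(x)\asymp x^{1/2}$, and achieves $Q\le x^{1/5-\epsilon}$. Even with the parity problem set aside, a Bombieri--Vinogradov input alone cannot produce primes without a Type~II estimate, so ``pushing $\theta$ past $1/2$'' in your sense would not by itself close the gap.
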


Conjecture \ref{conj: pal primes} seems far out of reach with current methods. As pointed out by Banks-Shparlinski \cite{Banks-Shparlinski}, one of the main difficulties stems from the large sparsity of $b$-palindromic numbers, hindering the efficacy of analytic methods. The sparsity of these is, in essence, as large as that of numbers with form $n^2+1$. However Landau's problem (posed in the year 1912) of whether or not there are infinitely many such primes, remains unsolved. On the other hand, Iwaniec \cite{Iwaniec} proved there are infinitely many integers $n$ such that $n^2+1$ has at most two primes factors. In view of recent results (see the discussion below and the following subsection) we believe that proving the analogous result, for $b$-palindromes, may be feasible over the upcoming years.

Some progress was made in the direction of Conjecture \ref{conj: pal primes}. In the year 2004, Banks-Hart-Sakata \cite{Banks-Hart-Sakata} showed that for $x$ large enough, 
\begin{equation}\label{eqn: BHS density}
\#\left\{p \in \mathscr{P}_b(x)\right\}\ll_b \dfrac{\log \log \log x}{\log \log x}\#\mathscr{P}_b(x).
\end{equation}
In 2009, Col \cite{Col} improved this to the expected
\begin{equation}\label{eqn: Col density}
\#\left\{p \in \mathscr{P}_b(x)\right\} \ll_b \dfrac{\#\mathscr{P}_b(x)}{\log x}.
\end{equation}
Col also obtained (see Theorem \ref{thm: Col} below) a lower bound for the amount of $b$-palindromic almost-primes, when the number of prime divisors is bounded above by a certain value depending only on $b$. In the context of the infinitude of such palindromes, Theorem \ref{thm: Col} was the first, and up to now unique, result of its kind. Here $\Omega(n)$ denotes the number of prime divisors of $n$, counted with multiplicity. 

\begin{thm}[Col \citelist{\cite{Col},  Corollary 2}]\label{thm: Col} 
	We have
	$$
	\#\left\{n \in \mathscr{P}_b(x) \ : \ \Omega(n) \leq k_b \right\} \gg_b \dfrac{\#\mathscr{P}_b(x)}{\log x}
	$$
	for some value $k_b > 0$ depending only on $b$ and satisfying $k_b \sim 24\pi b$ as $b \to \infty$. In particular $k_2=60$ and $k_{10} = 372$.
\end{thm}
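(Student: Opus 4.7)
The plan is to apply a weighted lower-bound sieve of Diamond--Halberstam--Richert type (analogous to Iwaniec's treatment of $n^2+1$) to the sequence $\mathscr{A} = \mathscr{P}_b^*(x)$, which has been tailored to avoid the local obstructions at the primes dividing $b^3-b$. Two ingredients are required. First, a Bombieri--Vinogradov-style level of distribution: for some $\theta = \theta_b > 0$ and every $A > 0$,
\[
\sum_{q \leq x^{\theta}} \max_{(a,q)=1} \left| \#\mathscr{P}_b^*(x,a,q) - \frac{\#\mathscr{P}_b^*(x)}{\varphi(q)}\right| \ll_{A,b} \frac{\#\mathscr{P}_b^*(x)}{(\log x)^A}.
\]
Second, the Richert weighted sieve, which converts such equidistribution into a lower bound
\[
\#\{n \in \mathscr{A} \ : \ \Omega(n) \leq k\} \gg_b \frac{\#\mathscr{A}}{\log x},
\]
valid whenever $k$ exceeds an explicit threshold determined by $1/\theta$ and the sifting dimension (which here equals $1$). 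Since $\#\mathscr{P}_b(x) \asymp \#\mathscr{P}_b^0(x) \asymp_b \#\mathscr{P}_b^*(x)$, this is enough to conclude Theorem \ref{thm: Col}.

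To establish the equidistribution, I would open the congruence by additive characters,
\[
\mathbf{1}_{n \equiv a (q)} = \frac{1}{q}\sum_{h (q)} e_q(h(n-a)),
\]
and reduce the discrepancy sum (after the usual Cauchy--Schwarz and dyadic decomposition in $N$ with $b^N \leq x$) to a moment bound of the form
\[
\sum_{q \leq Q} \sum_{a (q)}^{*} \left|\sum_{n \in \Pi_b^*(2N)} e_q(a n)\right|
\]
for $Q$ a small power of $b^N$. The palindromic structure, visible in writing $n$ in terms of its free digits $d_0,\ldots,d_N$ as a linear combination of the building blocks $b^j + b^{2N-j}$, factors the inner sum as a product over digits. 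Up to bounded local corrections coming from primes dividing $b^3-b$, its absolute value coincides with $\Phi_N(a/q)$ as defined in (\ref{eqn: def of Phi}). Thus the entire equidistribution problem collapses to averaged $L^1$ and higher-moment bounds for $\Phi_N$ over Farey fractions $a/q$ with $q \leq Q$.

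The principal difficulty is the Fourier-analytic control of $\Phi_N$. Each factor $\phi_b(\beta)$ attains its maximum $b$ precisely at the integers, so $\Phi_N(a/q)$ is large only when the $N-1$ real numbers $(a/q)(b^j + b^{2N-j})$ are simultaneously close to integers, a Diophantine condition governed by the multiplicative order of $b$ modulo $q$. A pointwise minor-arc bound of the shape $\Phi_N(a/q) \ll b^{(1-c_b)N}$, combined with a separate major-arc treatment near rationals of small denominator, should yield an admissible level of distribution satisfying $1/\theta_b \sim 24\pi b$ as $b \to \infty$; this is the step that ultimately dictates the asymptotic value of $k_b$. The Richert weighted sieve tuned to this $\theta_b$ then produces $k_b$, with the explicit numerical values $k_2 = 60$ and $k_{10} = 372$ arising from the best admissible $\theta$ at $b = 2, 10$ together with the precise parameter optimization built into the weighted sieve.
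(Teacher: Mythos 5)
Theorem \ref{thm: Col} is not proved in this paper at all: it is quoted verbatim from Col, and the only ``proof'' the paper offers is the attribution that it follows from Col's equidistribution estimate (Theorem \ref{thm: Col equi estimate}) combined with standard weighted-sieve machinery. Your outline reproduces that architecture correctly in broad strokes, and it also parallels the paper's own proof of the improved Theorem \ref{thm: at most 6 primes} (Lemma \ref{lem: sieve lemma} applied to $a_n = \mathbf{1}_{n \in \mathscr{P}^*_b}$ with the equidistribution input of Theorem \ref{thm: equidistribution}). So the skeleton is right; the problem is that every quantitative ingredient is asserted rather than obtained, and the one mechanism you do propose is not the one that works.

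Concretely: (i) the entire difficulty is the level-of-distribution estimate, and your route to it --- a pointwise minor-arc bound $\Phi_N(a/q) \ll b^{(1-c_b)N}$ for all $(a,q)=1$ with $q$ up to a fixed power of $x$ --- is not available. The known pointwise bound (Proposition \ref{prop: Linfty bound}, due to Col) saves only $\exp(-\sigma_\infty(b)M/\log q)$, hence is non-trivial only for $q \leq \exp(o(M))$, and power savings are known only for severely restricted moduli ($q \ll M^2$ with conditions on $\mathrm{ord}_q(b)$, or Bourgain-type prime moduli). Col's positive exponent $\beta_b$, like this paper's $1/5-\epsilon$, comes from bounding \emph{averages} over $q$ and $h$ of the products $\Phi_M$ (after the decomposition of Lemma \ref{lem: bound for exp sum of pals}, which yields a linear combination of $\Phi_M(\alpha b^{N-M})$, not a single $\Phi_N$), not from pointwise estimates. (ii) Your equidistribution hypothesis is mis-normalized for the sieve: the sieve needs $A_d = \#\mathscr{P}^*_b(x,0,d)$, i.e.\ the class $a \equiv 0\ (d)$, compared against $\#\mathscr{P}^*_b(x)/d$ (the density here is $g(d)=1/d$, not $1/\varphi(d)$); taking $\max_{(a,q)=1}$ with main term $X/\varphi(q)$ excludes precisely the residue class the sieve uses. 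Both Col's theorem and Theorem \ref{thm: equidistribution} are stated with $\max_{a\in\mathbb{Z}}$ and main term divided by $q$ for exactly this reason. (iii) The numerical claims are unsupported and inconsistent with the quoted data: Col's level is $\beta \sim 1/(6\pi b)$ (e.g.\ $\beta_2 = 1/30$, $\beta_{10}=1/186$), and the weighted sieve converts this into $k_2 = 60$, $k_{10} = 372$, $k_b \sim 24\pi b$; your claim that the level itself satisfies $1/\theta_b \sim 24\pi b$ and that $k_b$ is essentially $1/\theta_b$ does not match either figure, and nothing in your sketch derives the conversion. Those constants live in Col's paper, which is the actual source one must cite or reprove here.
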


Theorem \ref{thm: Col} was a consequence of Col's equidistribution estimate (see Theorem \ref{thm: Col equi estimate} below) for $b$-palindromes in residue classes to large moduli, combined with well-established facts from sieve theory. Here we use the notation
$$\mathscr{P}_b(x,a,q) := \left\{n \in \mathscr{P}_b(x) \ : \ n \equiv a (q)\right\}.$$ 

\begin{thm}[Col \citelist{\cite{Col},  Theorem 2}]\label{thm: Col equi estimate}
	There exists some value $\beta > 0$, depending only on $b$, such that for any $\epsilon,A > 0$,
	$$
	\sum_{\substack{q \leq x^{\beta - \epsilon} \\ (q,b^3-b)=1}} \sup_{y \leq x} \max_{a \in \mathbb{Z}} \left|\#\mathscr{P}_b(y,a,q) - \dfrac{\#\mathscr{P}_b(y)}{q}\right| \ll_{\epsilon,A,b} \dfrac{\#\mathscr{P}_b(x)}{\log^A x}. 
	$$
	For $b$ large, we may take $\beta \sim 1/6\pi b $. 
	Setting $\beta_b = \beta - \epsilon$, the inequality above holds for the following particular values.
	\begin{center} 
	\begin{tabular}{|c|c|c|c|c|c|c|c|c|c|}
	\hline	
	$b$ & $2$ & $3$ & $4$ & $5$ & $6$ & $7$ & $8$ & $9$ & $10$\\
	\hline
	$\beta_b$ &  $1/30$& $1/94$ &  $1/74$& $1/122$ & $1/114$ &  $1/158$&  $1/150$&  $1/194$&$1/186$\\
	\hline
	\end{tabular}
	\end{center}
\end{thm}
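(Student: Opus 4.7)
The plan is to follow the standard route from equidistribution to exponential sums, then exploit the product structure of palindromes. First, by orthogonality of additive characters modulo $q$,
\[
\#\mathscr{P}_b(y,a,q) - \frac{\#\mathscr{P}_b(y)}{q} = \frac{1}{q}\sum_{h=1}^{q-1} e_q(-ah)\, S_y(h/q),
\]
where $S_y(\alpha) := \sum_{n \in \mathscr{P}_b(y)} e(\alpha n)$. Taking suprema in $a$ and $y$ reduces the theorem to a Bombieri--Vinogradov type bound
\[
\sum_{\substack{q \leq Q \\ (q,b^3-b)=1}} \frac{1}{q} \sup_{y \leq x} \sum_{h=1}^{q-1} |S_y(h/q)| \ll_{\epsilon,A,b} \frac{\#\mathscr{P}_b(x)}{\log^A x},
\]
with $Q = x^{\beta-\epsilon}$. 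After splitting by number of digits, the hypothesis $(q,b^3-b)=1$, which handles the even-length palindromes (all divisible by $b+1$), lets me work exclusively with odd-length palindromes in $\Pi_b(2N)$ for $N$ up to about $\tfrac{1}{2}\log_b x$.

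Next I would write down the digit parameterization of $\Pi_b(2N)$, which immediately factorizes the exponential sum. Since
\[
n = m_0(b^{2N}+1) + \sum_{j=1}^{N-1} m_j(b^{2N-j}+b^j) + m_N b^N,\quad m_0\in[1,b),\ m_j\in[0,b),
\]
one has $|S_y(\alpha)| \leq b\,\Phi_N(\alpha)\,\phi_b(\alpha b^N)$ on each length class, with $\Phi_N$ and $\phi_b$ as in \eqref{eqn: phi def}--\eqref{eqn: def of Phi}. The problem is then to bound, for each $N$,
\[
T_N := \sum_{\substack{q \leq Q \\ (q,b^3-b)=1}} \frac{1}{q} \sum_{h=1}^{q-1} \Phi_N(h/q)\, \phi_b(b^N h/q).
\]
The trivial bound $\Phi_N \leq b^{N-1}$ is catastrophic; the task is to extract cancellation sufficient to save a factor of $\log^{-A} x$ uniformly in $q \leq Q$, after which summing a geometric series in $N$ closes the argument.

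The analytic heart of the approach is to combine a continuous moment estimate for $\Phi_N$ with a transfer to rationals. By expanding the product and exchanging integration with the discrete digit sums, one should establish bounds of the form
\[
\int_0^1 \Phi_N(\alpha)^{2k}\,d\alpha \ll_{k,b} b^{(2k - \delta(k,b))N}
\]
for some explicit saving $\delta(k,b) > 0$, reflecting square-root type cancellation amongst the $\phi_b$ factors. Gallagher's lemma and the large sieve then upgrade this to an average bound for $\Phi_N(h/q)^{2k}$ over Farey fractions with $q \leq Q$, losing at most a factor of $Q^2$. Applied to $T_N$ via Cauchy--Schwarz, the resulting estimate beats the main term by a power savings whenever $Q$ lies below an explicit power of $x$; optimizing in $k$ produces the claimed asymptotic $\beta \sim 1/(6\pi b)$.

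The main obstacle, and the source of the explicit $\beta_b$, is the $b$-adic Diophantine analysis needed to show that $\Phi_N(h/q)$ is rarely close to its trivial size. Because the twisted arguments $(b^n + b^{2N-n})h/q$ are all driven by one rational $h/q$, many factors can conspire to be simultaneously near their maximum $b$, and this must be ruled out on average. The decisive quantity is the multiplicative order $\text{ord}_q(b)$: small orders would inflate $\Phi_N$, so one needs the hypothesis $(q,b^3-b)=1$ plus a counting argument on $q$ to restrict the frequency with which such near-maximality occurs. Once this Diophantine input is secured, assembling the large-sieve machinery with the moment bound delivers the theorem.
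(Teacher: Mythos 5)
Your outline is not the paper's argument for this statement at all --- Theorem \ref{thm: Col equi estimate} is quoted from Col's paper and not reproved here --- but it does largely parallel the strategy this paper uses for its own, stronger Theorem \ref{thm: equidistribution}; judged on its own terms, however, it has a genuine gap at its analytic core. You claim the large sieve transfers the moment bound $\int_0^1\Phi_N^{2k}$ to Farey fractions ``losing at most a factor of $Q^2$''. It does not: the loss is $\delta^{-1}+L$ where $L$ is the length of $\Phi_N^{k}$ as a trigonometric polynomial, i.e.\ $Q^2+kb^{2N}$, and since $b^{2N}\asymp x\gg Q^2$ at any level $Q=x^{\beta}$ the dominant loss is $\asymp kx$. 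Concretely, the moment bound (cf.\ Proposition \ref{prop:2K-th moment2}) saves about $b^{2N}$, the sieve loses about $b^{2N}$ back, and after H\"older the net gain over the trivial bound $Qb^{N}$ for your $T_N$ is only about $Q^{1/2k}$, whereas the theorem requires a saving of $Q\log^{A}x$. So the scheme as stated does not reach any power level $x^{\beta}$. Repairing this needs an extra idea, e.g.\ the device used in this paper: split $\Phi_N=P_1P_2P_3$, handle $P_1,P_2$ by direct $L^2$ estimates (Lemma \ref{lem: L2 bound}), and apply the $2K$-th moment plus large sieve only to a truncated product $\Phi_{N-2M}$ whose length $Kb^{2(N-2M)}$ is balanced against $Q^2$, together with an $L^\infty$ bound (Proposition \ref{prop: Linfty bound}) for small moduli.

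Two further points. First, your bound $|S_y(\alpha)|\le b\,\Phi_N(\alpha)\,\phi_b(\alpha b^N)$ on ``each length class'' fails for the top, truncated class once you take $\sup_{y\le x}$: an incomplete digit range does not factor, and one needs the decomposition of Lemma \ref{lem: bound for exp sum of pals}, which produces the double sum $\sum_{M\le N}\Phi_M(\alpha b^{N-M})$. Second, the quantitative content of the statement --- $\beta\sim 1/6\pi b$ and the table of $\beta_b$ --- is nowhere derived; asserting that ``optimizing in $k$'' yields it is not a proof, and in fact the constant in Col's theorem does not arise from a large-sieve moment optimization. Relatedly, the decisive Diophantine input in Col's argument is a uniform lower bound of the shape $\sum_{n\le M}\|h(b^2-1)b^n/q\|^2\gg M/\log q$ for $(q,h(b^3-b))=1$ (this is exactly what drives Proposition \ref{prop: Linfty bound}), not a hypothesis on $\mathrm{ord}_q(b)$; the multiplicative-order conditions you invoke belong to the Banks--Hart--Sakata and Banks--Shparlinski bounds, which are restricted to much smaller moduli.
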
	

Theorem \ref{thm: Col equi estimate} (see also Proposition \ref{prop: Linfty bound} here) considerably improved over previous estimates of Banks-Hart-Sakata \cite{Banks-Hart-Sakata} valid for $\beta \ll (\log \log x)/\log x$ with the implied constant small enough (see Corollary 4.5 in \cite{Banks-Hart-Sakata}).

\subsection{Statement of results}

In the present work we obtain the following improved versions of Theorems \ref{thm: Col} and \ref{thm: Col equi estimate}. Here $P^-(n)$ denotes the smallest prime divisor of $n$.

\begin{thm}\label{thm: at most 6 primes}
	Let $b\geq 2$ be an integer. Then for $x \gg_b 1$ large enough, 
	$$
	\#\left\{n \in \mathscr{P}_b(x) \ : \ \Omega(n) \leq 6, \ P^{-}(n) \geq x^{1/21}\right\} \asymp_{b} \dfrac{\#\mathscr{P}_b(x)}{\log x}.
	$$
\end{thm}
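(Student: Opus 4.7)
My plan is to deduce Theorem~\ref{thm: at most 6 primes} by combining a Bombieri--Vinogradov style equidistribution estimate for $b$-palindromes, of the same shape as Theorem~\ref{thm: Col equi estimate} but with a substantially larger level of distribution, with a linear weighted sieve of Richert (or Diamond--Halberstam--Richert) type. The outline parallels Col's derivation of Theorem~\ref{thm: Col} from Theorem~\ref{thm: Col equi estimate}, but both inputs must be sharpened to cut the number of prime factors down to six under the additional constraint $P^-(n) \geq x^{1/21}$.

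First I would establish equidistribution of $\mathscr{P}_b^*$ in residue classes $a \pmod q$ for $q$ as large as $x^\beta$, for some $\beta$ large enough to feed the weighted sieve of the next step (and, ideally, independent of $b$). Col's approach writes $\mathbf{1}_{n \equiv a(q)}$ via additive characters and, after separating variables across the free digits of a $(2N+1)$-digit palindrome, reduces the error term to sums involving the product $\Phi_N(\alpha)$ defined in \eqref{eqn: def of Phi}. The pointwise bound on $\Phi_N$ that underlies Col's $\beta_b$ is the bottleneck. To break through, I would bound second (and higher) moments of $\Phi_N$ averaged over Farey fractions $a/q$ with $q$ up to a large power of $x$, exploiting the factorization into the digit factors $\phi_b(\alpha(b^n+b^{2N-n}))$ together with large sieve inequalities, in the spirit suggested by the abstract. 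Applying Cauchy--Schwarz then converts such moment estimates into a Bombieri--Vinogradov type inequality with a usable level $x^\beta$.

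Second I would apply the linear weighted sieve to $\mathscr{P}_b^*(x)$ with sifting parameter $z = x^{1/21}$, using Richert's logarithmic weights $w(n) = \sum_{z \leq p < y,\ p \mid n}(1 - \log p / \log y)$ for a carefully chosen $y$. These weights are arranged so that $1 - w(n) \leq 0$ whenever $n$ satisfies $\Omega(n) \geq 7$ and $P^-(n) \geq z$, hence the resulting weighted sum is a lower bound for the quantity in the theorem restricted to $\mathscr{P}_b^*$. The equidistribution estimate from the first step allows both the main and the linear sieve terms to be evaluated, producing a positive lower bound of order $\#\mathscr{P}_b^*(x)/\log x$ for suitable $\beta$ and $y$. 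Since $\#\mathscr{P}_b^*(x) \asymp_b \#\mathscr{P}_b(x)$, the transfer from $\mathscr{P}_b^*$ to $\mathscr{P}_b$ costs only a constant depending on $b$. The matching upper bound follows from a Rosser--Iwaniec or Selberg upper bound sieve applied with the same equidistribution, giving $\#\{n \in \mathscr{P}_b(x) : P^-(n) \geq x^{1/21}\} \ll_b \#\mathscr{P}_b(x)/\log x$ via the fundamental lemma.

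The decisive obstacle is the average estimate for $\Phi_N$ in the first step. The $L^\infty$ control used by Col loses a factor growing with $b$ and is simply not strong enough to reach a level of distribution compatible with cutting $\Omega(n)$ down to $6$. One must extract cancellation simultaneously across many frequencies $a/q$, and the combination of six prime factors together with the threshold $x^{1/21}$ in the theorem reflects the trade-off optimised by the level of distribution those moment estimates can achieve, plugged into the standard Richert weighted-sieve balancing condition.
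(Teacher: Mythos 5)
Your proposal follows essentially the same route as the paper: an equidistribution estimate for $\mathscr{P}_b^*$ in progressions to a large level (the paper's Theorem \ref{thm: equidistribution}, level $x^{1/5-\epsilon}$, proved via moments of $\Phi_N$ and the large sieve) fed into Richert's weighted linear sieve with $z = x^{1/21}$, together with the observation that $P^-(n) \geq x^{1/21} > b^3-b$ reduces the count over $\mathscr{P}_b$ to one over $\mathscr{P}_b^*$. The only cosmetic difference is that the paper extracts both the lower and the upper bound from a single quoted weighted-sieve result (Lemma \ref{lem: sieve lemma}, applied with $D = z^4 = x^{4/21}$, which satisfies $D \geq x^{1/\Delta_6+\epsilon}$ and lies below the level $x^{1/5-\epsilon}$), rather than running a separate upper-bound sieve.
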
 

Theorem \ref{thm: at most 6 primes} is a consequence of Theorem \ref{thm: equidistribution} below (discussed in the following subsections) facts from sieve theory and an analogue of Theorem \ref{thm: equidistribution} with square moduli. See Proposition \ref{prop: square equidistribution} for this. The latter is an application of the Baier-Zhao \cite{Baier-Zhao}
estimate for the large sieve with square moduli, as well as the $L^\infty$-type bound of Col \cite{Col} (see Proposition \ref{prop: Linfty bound} here).

In what follows
$$\mathscr{P}_b^*(x) := \{n \in \mathscr{P}_b(x) \ : \ (n,b^3-b)=1\}.$$

\begin{thm}\label{thm: equidistribution}
	Let $b\geq 2$ be an integer and let $x \geq 1$. Then for any $\epsilon > 0$,
	\begin{equation}\label{eqn: equi 1}
	\sum_{\substack{q \leq x^{1/5-\epsilon} \\ (q,b^3-b)=1}} \sup_{y \leq x }\max_{a \in \mathbb{Z}} \left|\sum_{\substack{n\in \mathscr{P}_b^*(y)}}\left(\mathbf{1}_{n \equiv a (q)} - \dfrac{1}{q}\right)\right| \ll_{b,\epsilon} \dfrac{\#\mathscr{P}_b^*(x)}{e^{\sigma(b, \epsilon) \sqrt{\log x}}} ,
	\end{equation}
	where $  \sigma(b,\epsilon) > 0$ is some value depending only on $b$ and $\epsilon$. 
\end{thm}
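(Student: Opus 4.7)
The strategy is to use Fourier analysis to convert the estimate into a bound on exponential sums over palindromes, exploit the digit structure of palindromes to reduce these to the products $\Phi_N(h/q)$, and then establish moment/average bounds on $\Phi_N$ over rationals $h/q$ with $q$ up to $x^{1/5-\epsilon}$.

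\textbf{Step 1 (Fourier expansion and digit parametrisation).} Since $(q,b^3-b)=1$ forces $(q,b)=1$, orthogonality of additive characters modulo $q$ gives
$$\#\mathscr{P}_b^*(y,a,q) - \frac{\#\mathscr{P}_b^*(y)}{q} = \frac{1}{q}\sum_{h=1}^{q-1} e_q(-ha)\sum_{n \in \mathscr{P}_b^*(y)} e_q(hn),$$
so it suffices to control the inner exponential sum uniformly in $a$. I would then split by palindrome length, restricting to $n \in \Pi_b^*(2N)$ for each $N \leq \tfrac{1}{2}\log_b y$, parameterise such an $n$ via its first $N+1$ digits through $n = m_N b^N + \sum_{1 \leq j < N} m_j(b^j+b^{2N-j}) + m_0(1+b^{2N})$, and detect both the leading-digit condition $m_0 \neq 0$ and the coprimality $(n,b^3-b)=1$ at a total cost of $O_b(1)$ (Möbius inversion on a fixed divisor). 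Once the free inner digits $m_1, \ldots, m_{N-1}$ are summed, the exponential sum factors into a product whose modulus is precisely $\Phi_N(h/q)$.

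\textbf{Step 2 (Reduction to a double average of $\Phi_N$).} After Step 1 and summation over $N$, the left-hand side of \eqref{eqn: equi 1} is bounded by
$$O_b\!\left(\sum_{N \leq \frac{1}{2}\log_b x} b \sum_{\substack{q \leq x^{1/5-\epsilon} \\ (q,b^3-b)=1}} \frac{1}{q}\sum_{h=1}^{q-1}\Phi_N(h/q)\right).$$
The task therefore reduces to proving a sufficient power-saving, over the trivial bound $\Phi_N \leq b^{N-1}$, on this inner double sum, uniformly for $N \asymp \log x$ and level $Q=x^{1/5-\epsilon}$, so as to yield the target saving $e^{-\sigma(b,\epsilon)\sqrt{\log x}}$.

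\textbf{Step 3 (Moment bounds and cancellation).} I would bound $\sum_{h=1}^{q-1}\Phi_N(h/q)$ through higher $L^k$ moments of $\Phi_N$, combining Hölder's inequality with the standard estimate $\phi_b(\beta) \leq \min(b, 1/(2\|\beta\|))$. The hypothesis $(q,b^3-b)=1$ ensures that $\text{ord}_q(b)$ is large, so the sequence $\{(b^j + b^{2N-j}) \pmod q\}_{1 \leq j < N}$ is sufficiently dispersed to yield genuine cancellation in the product. One then passes from the mean-value $\int_0^1 \Phi_N(\alpha)^k d\alpha$ back to the sum over rationals $h/q$ via an Erdős--Turán type inequality, and optimises the exponent $k$ together with the dyadic parameter $N$ to obtain the claimed sub-exponential saving.

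\textbf{Main obstacle.} The decisive difficulty is that $\Phi_N(h/q)$ can spike close to $b^{N-1}$ at rationals $h/q$ for which many of the angles $h(b^j+b^{2N-j})/q$ cluster near integers. To achieve level of distribution $1/5$ uniformly in $b$—a genuine improvement over the $1/6\pi b$ of Theorem \ref{thm: Col equi estimate}—these resonant rationals must be isolated and handled by a separate argument (for example a direct equidistribution or large-sieve input), while the smooth $L^k$-moment estimate absorbs the complementary range. Executing this bifurcation with uniformity in $b$, and with a moment exponent sharp enough to surpass Col's Weyl-type barrier, is the technical heart of the proof.
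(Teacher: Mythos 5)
Your skeleton (Fourier expansion, reduction to averages of $\Phi_M$ over Farey fractions, higher moments) points in the right general direction, but the core of your Step 3 rests on a false arithmetic claim and misses the two ingredients that actually make the theorem work. First, $(q,b^3-b)=1$ does \emph{not} ensure that $\mathrm{ord}_q(b)$ is large: $q$ may divide $b^k-1$ for small $k>2$ while being coprime to $b(b^2-1)$, so no dispersion of the angles $h(b^j+b^{2N-j})/q$ can be extracted from this hypothesis. Arguments that do rely on large order (as in Banks--Hart--Sakata) apply only to restricted moduli and, because of the diagonal terms, can never beat $b^{M/2}$, far short of what is needed here. Second, passing from $\int_0^1\Phi_N^k$ to sums over the points $h/q$ is not done by Erd\H{o}s--Tur\'an (that inequality controls discrepancy, not sums over well-spaced points); the relevant tool is the large sieve applied to $\Phi_N^K$ written as an exponential sum, and the genuine work is then bounding the resulting coefficient energy $\sum_\ell a_\ell^2$, i.e.\ counting solutions of $\sum_m(u_m-v_m)(b^m+b^{2N-m})=0$ weighted by composition counts $r(\cdot;K,b)$ -- this is Proposition \ref{prop:2K-th moment2}, and your proposal contains no substitute for it. Moreover, a direct moment bound on the full product is not enough: the paper must split $\Phi_N=P_1P_2P_3$, treat $P_1,P_2$ by elementary $L^2$ estimates, and raise only the tail $P_3$ to the $2K$-th power, exploiting the identity that $P_3$ is again a full product $\Phi_{N-2M}$ at a dilated argument, so that H\"older with exponents $(\ell,\ell,2K)$ can be applied without losing the structure.

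Finally, your plan cannot produce the stated saving $e^{-\sigma(b,\epsilon)\sqrt{\log x}}$ in the range of small moduli. For a fixed (or slowly growing) $q$, the quantity $\frac1q\sum_{h}\sup_{y\le x}|\cdots|$ involves only boundedly many points $h/q$, so no averaging over $h$ or $q$ -- and in particular no moment or large-sieve estimate -- yields cancellation there; one needs a \emph{pointwise} bound, namely Col's $L^\infty$ estimate (Proposition \ref{prop: Linfty bound}), giving a factor $\exp(-\sigma_\infty(b)M/\log q)$ for every individual $q$ coprime to $b^3-b$. The paper multiplies this with the averaged bound, and the optimization $e^{-\sigma_1\log R-\sigma_\infty M/\log R}\le e^{-c\sqrt{M}}$ over dyadic ranges $R$ is exactly where the $\sqrt{\log x}$ in the exponent comes from; your ``resonant rationals'' bifurcation does not identify this mechanism. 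Two further technical points you gloss over: the supremum over $y\le x$ forces incomplete digit blocks, so the exponential sum decomposes into the double family $\Phi_M(\alpha b^{N-M})$, $M\le N$ (Lemma \ref{lem: bound for exp sum of pals}), not a single $\Phi_N$; and detecting $(n,b^3-b)=1$ by M\"obius inversion introduces shifts $k/(b^3-b)$ into the argument, so all average bounds must be uniform in such shifts $\beta$, which is how Proposition \ref{prop: average bound} is stated and proved.
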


We now discuss some of the main
ingredients in the proof of Theorem \ref{thm: equidistribution}.

\subsection{Setup}

For the sake of brevity we denote by $E$ the left hand side of the inequality in Theorem \ref{thm: equidistribution}. Let $\mathscr{P}_b^0$, $\mathscr{P}_b^*$ be defined as in (\ref{def of P0}), (\ref{def of P*}), respectively. Set $\mathscr{P}_b^0(x) := \mathscr{P}_b^0 \cap [1,x]$. Since every $b$-palindrome $n \geq 1$ with $\lfloor \log_b n \rfloor$ odd (or equivalently, with an even number of digits in its $b$-adic expansion) is divisible by $b + 1$, a divisor of $b^3-b$, it follows
$
\mathscr{P}_b^* = \left\{n \in \mathscr{P}_b^0 \ : \ (n,b^3-b)=1\right\}.  
$
Then by
the M\"{o}bius inversion formula $$\mathbf{1}_{(n,b^3-b)=1} = \sum_{r \mid (n, b^3-b)} \mu(r),$$ 
the Fourier expansions 
$$
\mathbf{1}_{r \mid n} = \dfrac{1}{r}\sum_{k(r)}e_r(nk)
$$ and 
\begin{equation}\label{eqn: Fourier type expansion}
\mathbf{1}_{n\equiv a (q)} = \dfrac{1}{q}\sum_{h(q)} e_q(-ah)e_q(n h)
\end{equation}
(with the common notation $e_m(\alpha) := e(\alpha/m)$ with $e(\alpha) := e^{2\pi i \alpha}$)
one derives
$$
E \ll_b(\log x) \max_{k\in \mathbb{Z}}\sum_{\substack{2 \leq q \leq x^{1/5 - \epsilon} \\ (q,b^3-b)=1}} \dfrac{1}{q} \sum_{h(q)}^*\sup_{\substack{y \leq x }} \left|\sum_{n \in \mathscr{P}_b^0(y)} e\left(\dfrac{hn}{q} + \dfrac{kn}{b^3-b} \right)\right| .
$$
The asterisk above the $h$-sum signifies that $h$ runs over the invertible residue classes modulo $q$. 

The use of the decomposition in (\ref{eqn: Fourier type expansion}) is rather typical in the literature and it will shortly lead to qualitative gains in multiplicative algebraic structures, with the appearance of the products $\Phi_N$ defined in (\ref{eqn: def of Phi}). Unfortunately we pay the price of a quantitative loss of roughly $x^{1/5-\epsilon}$ in the trivial bound. Thus in order for such a Fourier-type approach to succeed, we must not only be able to recover the lost $x^{1/5-\epsilon}$, but gain much more (in fact gain an extra $\exp(\sigma\sqrt{\log x})$). 

From the works of Banks-Hart-Sakata \cite{Banks-Hart-Sakata} and Col \cite{Col} (see Lemma \ref{lem: bound for exp sum of pals} here) the inner sum admits a decomposition into linear combinations of the products $\Phi_N$ with $0 \leq N \leq \frac{1}{2}\log_b x$. After a dyadic subdivision of the interval $2 \leq q \leq x^{1/5 - \epsilon}$, the problem of bounding $E$ then boils down to bounding averages of the form
$$
S = \sum_{\substack{2 \leq q \leq Q \\ (q,b^3-b)=1}} \sum_{h(q)}^* \Phi_N\left(\dfrac{h}{q} + \dfrac{k}{b^3-b}\right) 
$$ 
for $Q \ll x^{1/5-\epsilon}$. The trivial bound gives $S \leq Q^2 b^N \leq Q^2 \sqrt{x}$ and our job is to beat this by much more than a factor of $Q$.

Thanks to the work of Col \cite{Col} (see Proposition \ref{prop: Linfty bound} here) we can readily dispose of the cases with $Q$ relatively small, say $Q \ll \exp(c\sqrt{\log x})$. We are thus left with considering $S$, or more strongly the average
\begin{equation}\label{eqn: def of T}
T = \sum_{\substack{q \leq Q \\ (q,b)=1}} \sum_{h(q)}^* \Phi_N\left(\dfrac{h}{q} + \beta \right),
\end{equation}
for $\exp(c\sqrt{\log x}) \ll Q \ll x^{1/5-\epsilon}$ and $\beta \in \mathbb{R}$. 

As a first rough thought, one could envision splitting the product $\Phi_N$ into two products, one over $1 \leq n \leq M$ and the other over $M < n < N$, applying Cauchy-Schwarz and end up considering averages of squares (just as Bombieri-Vinogradov's treatment of the Type II sums; see Iwaniec-Kowalski \cite{Iwaniec-Kowalski}). These could be attacked say via a direct argument (see for instance Lemma \ref{lem: L2 bound}) or the large sieve inequality (see Lemma \ref{lem: large sieve} below). The direct argument (namely that of expanding the square, switching orders of summation, using the orthogonality of the additive characters and bounding the resulting sums trivially) can at best aid in recovering a $Q$ from the trivial bound, and is thus insufficient. This brings us to the large sieve inequality.

\subsection{Large sieve}

First given a real number $0 < \delta \leq 1/2$, one says that a set of points $\alpha_1, \ldots, \alpha_R \in \mathbb{R}/\mathbb{Z}$ is $\delta${\em -spaced} if $\|\alpha_r - \alpha_s\| \geq \delta$ whenever $r \neq s$, where $\|\alpha\| := \min_{k\in \mathbb{Z}}|\alpha - k|$. 

\begin{lem}[Large sieve inequality]\label{lem: large sieve}
	For any set of $\delta$-spaced points $\alpha_1, \ldots, \alpha_R \in \mathbb{R}/\mathbb{Z}$ and any complex numbers $a_n$ with $M < n \leq M + N$, where $0 < \delta \leq 1/2$ and $N \geq 1$ is an integer, we have
	\begin{equation}\label{ineq: large sieve}
	\sum_{r=1}^R \left|\sum_{ M < n \leq M+N} a_n e(\alpha_r n)\right|^2 \leq \left(\delta^{-1} + N - 1\right) \sum_{M< n \leq M+N}|a_n|^2. 
	\end{equation}
\end{lem}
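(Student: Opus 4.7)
\emph{Plan.} The bound in (\ref{ineq: large sieve}) with the sharp constant $\delta^{-1} + N - 1$ is the Selberg--Montgomery--Vaughan form of the large sieve. My plan is to reduce to the dual statement and then apply Poisson summation against a non-negative band-limited majorant of $\mathbf{1}_{(M, M+N]}$.

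First, by the standard duality argument (the squared operator norm of the finite linear map $a \mapsto \bigl(\sum_n a_n e(\alpha_r n)\bigr)_{r=1}^R$ equals that of its adjoint), the inequality (\ref{ineq: large sieve}) is equivalent to the dual form
\begin{equation*}
\sum_{M < n \leq M+N} \left|\sum_{r=1}^R c_r \, e(\alpha_r n)\right|^2 \leq (\delta^{-1} + N - 1)\sum_{r=1}^R |c_r|^2,
\end{equation*}
valid for arbitrary complex $c_1, \ldots, c_R$. I would work with this dual form throughout, which is far more natural for Poisson summation.

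Next I would invoke Selberg's extremal majorant: there exists a non-negative function $F \in L^1(\mathbb{R})$ with $F \geq \mathbf{1}_{(M, M+N]}$ pointwise, whose Fourier transform $\widehat{F}(\xi) := \int_\mathbb{R} F(x) e(-x\xi)\, dx$ is continuous and supported in $[-\delta, \delta]$ (hence vanishes at $\pm \delta$), with total mass $\widehat{F}(0) = \int_\mathbb{R} F = N - 1 + \delta^{-1}$. Setting $T(\alpha) := \sum_r c_r e(\alpha_r \alpha)$, I would estimate
\begin{equation*}
\sum_{M < n \leq M+N} |T(n)|^2 \leq \sum_{n \in \mathbb{Z}} F(n)\, |T(n)|^2 = \sum_{r=1}^R\sum_{s=1}^R c_r \overline{c_s} \sum_{n \in \mathbb{Z}} F(n)\, e\bigl((\alpha_r - \alpha_s) n\bigr),
\end{equation*}
and apply Poisson summation to each inner sum, obtaining $\sum_{k \in \mathbb{Z}} \widehat{F}\bigl(k - (\alpha_r - \alpha_s)\bigr)$. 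On the diagonal $r = s$ only $k = 0$ contributes, yielding $\widehat{F}(0) = \delta^{-1} + N - 1$; off the diagonal, the spacing hypothesis $\|\alpha_r - \alpha_s\| \geq \delta$ together with the support of $\widehat{F}$ in $[-\delta, \delta]$ and the vanishing at the endpoints eliminates every term (since $\delta \leq 1/2$ ensures at most one integer $k$ could contribute, and for that $k$ we are on the boundary). Summing over $r, s$ yields the dual inequality.

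The main obstacle is the construction of the Selberg extremal majorant with the precise $L^1$ value $N - 1 + \delta^{-1}$; building it from translates and rescalings of the Beurling function is classical but not entirely elementary, and verifying the sharp mass requires care. A fully self-contained alternative, which I would mention if the majorant is to be avoided, is the Montgomery--Vaughan route via the Hilbert-type inequality $\bigl|\sum_{r \neq s} c_r \overline{c_s}/(\alpha_r - \alpha_s)\bigr| \leq \pi \delta^{-1} \sum_r |c_r|^2$ combined with an integration-by-parts identity applied to $|T(\alpha)|^2$, which recovers the same constant by different means.
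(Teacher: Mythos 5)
The paper does not actually prove this lemma: it is quoted as a known result with a pointer to Theorem 7.7 of Iwaniec--Kowalski, so there is no internal argument for your sketch to match or clash with. Your outline is the classical Selberg/Montgomery--Vaughan proof of exactly that cited theorem: the duality reduction is correct, the Poisson-summation computation is correct, and the observation that continuity of $\widehat{F}$ plus vanishing at $\pm\delta$ kills the boundary case $\|\alpha_r-\alpha_s\|=\delta$ is the right way to handle the off-diagonal terms; the Montgomery--Vaughan alternative via the Hilbert-type inequality that you mention is likewise a standard complete route. One small but real imprecision: you ask for a majorant $F\geq \mathbf{1}_{(M,M+N]}$ (an interval of length $N$) with mass $\widehat{F}(0)=N-1+\delta^{-1}$, but the Beurling--Selberg construction for an interval of length $L$ with $\operatorname{supp}\widehat{F}\subseteq[-\delta,\delta]$ yields mass $L+\delta^{-1}$, so majorizing the full length-$N$ interval would give the weaker constant $N+\delta^{-1}$. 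The sharp constant in (\ref{ineq: large sieve}) comes from majorizing only what is needed, namely the integer points $M<n\leq M+N$: take the Selberg majorant of the closed interval $[M+1,M+N]$, of length $N-1$, which dominates $|T(n)|^2$ at every relevant integer $n$ and has mass $(N-1)+\delta^{-1}$. With that one-line correction (and granting the classical majorant construction, which you rightly flag as the only nontrivial input), your plan is a correct and complete strategy, and it is in substance the proof behind the reference the paper cites.
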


\begin{proof}
	See for instance Theorem 7.7 in Iwaniec-Kowalski \cite{Iwaniec-Kowalski}. 	
\end{proof}

The large sieve inequality is a powerful and useful tool for tackling averages of exponential sums and has enjoyed several applications in the literature (see for instance \cite{Friedlander-Iwaniec, Iwaniec-Kowalski, maud, Maynard, Tenenbaum} to name a few). Unfortunately the quality of the bounds it offers highly depends on the density $\eta = \#\mathcal{S}/N$ of the set $\mathcal{S} \subseteq (M, M+N]$ in which the sequence $(a_n)$ is supported on, with better bounds for denser $\mathcal{S}$ and worse bounds for sparser $\mathcal{S}$. For example in the case when each $a_n \ll 1$ and $R \asymp Q^2 \asymp \delta^{-1}  \asymp N$, the large sieve inequality beats the trivial bound by a factor of $\#\mathcal{S} = \eta N \asymp Q \eta \sqrt{N}$. In particular in order for the large sieve to beat the trivial bound by much more than $Q$ (which is what we seek) one needs $\eta$ to be much larger than $1/\sqrt{N}$. Unfortunately sparse sets such as, say, the $b$-palindromes or close relatives, have density $ \ll 1/\sqrt{N}$ on $(M, M+N]$. For sequences supported on such sets, direct use of the large sieve proves insufficient.

An important idea for the case of the sparse sequences is to consider instead larger moments such as
$$
\sum_{r=1}^R \left|\sum_{ M < n \leq M+N} a_n e(\alpha_r n)\right|^{2K}
$$
for some integer $K \geq 2$; see for example Iwaniec-Kowalski \cite{Iwaniec-Kowalski}. One may arrive at these from smaller moments via H\"{o}lder's inequality, say. To explain the concept, note we may write 
$$
\left(\sum_{ M < n \leq M+N} a_n e(\alpha_r n)\right)^K = \sum_{KM < n \leq K(M+N)} b_n e(\alpha_r n),
$$
where the coefficients $b_n$ are now supported on the set $K\mathcal{S}$ comprised of the additions of any $K$ elements from $\mathcal{S}$. If the set $\mathcal{S}$ is sufficiently dissociated additively, then $K\mathcal{S}$ is much denser in $(KM, K(M+N)]$. In this case we can make a more effective use of the large sieve.

\subsection{Approach}

Let us now return to (\ref{eqn: def of T}). 
Motivated by the discussion above, we split the product $\Phi_N$ into three pieces as $\Phi_N = P_1P_2P_3$, where $P_1$ is the product over $1 \leq n \leq M$, $P_2$ is the product over $M < n \leq 2M$ and $P_3$ is the product over $2M < n < N$ (actually this is our approach for the case when $Q \gg b^{(\frac{2}{5} - \epsilon_0)N}$; the case $Q \ll b^{(\frac{2}{5} - \epsilon_0)N}$ is treated in a slightly different way). The choice of $M$ is optimized later. Applying H\"{o}lder's inequality with the triple $(\ell,\ell,2K)$ (where $K\geq 2$ is an integer to be chosen later and $\ell > 2$ is such that $2/\ell + 1/2K = 1$) one obtains
$$
T \leq \left(\sum_{\substack{q \leq Q \\ (q,b)=1}} \sum_{h(q)}^* P_1^\ell \right)^{1/\ell} \left(\sum_{\substack{q \leq Q \\ (q,b)=1}} \sum_{h(q)}^* P_2^\ell \right)^{1/\ell} \left(\sum_{\substack{q \leq Q \\ (q,b)=1} } \sum_{h(q)}^* P_3^{2K}\right)^{1/2K}. 
$$
For $K$ somewhat large (although bounded in terms of some parameters for our purposes) $\ell$ is close to $2$. The first two sums corresponding to $P_1, P_2$ are then treated by us in direct fashion (as in Lemma \ref{lem: L2 bound}). Here we avoid using the large sieve which, given the sparsity of the coefficient sequences corresponding to $P_1, P_2$, gives no significant gains over the direct approach.

There are two reasons why we have chosen to raise $P_3$ to the $2K$-th power and not $P_1$ nor $P_2$, namely: 1. the following algebraic identity
$$
\sum_{\substack{q \leq Q \\ (q,b)=1}} \sum_{h(q)}^*P_3^{2K} = \sum_{\substack{q \leq Q \\ (q,b)=1}} \sum_{h(q)}^* \Phi_{N-2M}^{2K}\left(\dfrac{h}{q} + \beta b^{2M}\right); 
$$
and 2. the sequence of coefficients corresponding to $\Phi_{N-2M}$ is denser than those corresponding to $P_1,P_2$. 
The trivial bound shows the above is at most $ Q^2 b^{2K(N-2M)} \leq Q^2 x^{K(1 - \frac{2M}{N})}$ and our main goal then reduces to beating this by much more than $Q$ for certain suitable choices of $M,K$. This is accomplished via Proposition \ref{prop:2K-th moment2} giving upper bounds for moments of $\Phi_N$ for any $N\geq 2$. Its proof relies partly on the large sieve.

\section{Acknowledgments}

We would like to express our gratitude to Qiang Wang for useful advice and to Igor E. Shparlinski for conversations involving the square moduli and notifying us of Col's work \cite{Col}. We are also deeply indebted to C\'{e}cile Dartyge, Hirotaka Kobayashi, Yuta Suzuki and Ryota Umezawa for pointing out one important omission in an earlier draft of the work and for referring us to the books by Greaves \cite{Greaves} and Halberstam-Richert \cite{Halberstam-Richert}. Finally we thank the anonymous referees for several helpful suggestions.  
D. Panario was funded by the Natural Science and Engineering Research Council of Canada (NSERC), reference number RPGIN-2018-05328.

\section{Paper structure}

The rest of the work goes as follows. 
In Section \ref{sec: notation} we describe the notation used in the work. Most of it is typical in the literature, but we also introduce notation that is particular to us. 
In Section \ref{sec: preparatory lemmas} we compile several technical tools needed in the following two sections. In Section \ref{sec: exp sums} we discuss the current (but limited) state of knowledge on exponential sums over palindromes, as well as slightly add to the literature via Proposition \ref{prop: product bound for modulus p} and Corollary \ref{cor: bound using product of Bourgain sequence} there. In Section \ref{sec: moments} we prove Proposition \ref{prop:2K-th moment2} giving an upper bound for moments of $\Phi_N$. In Section \ref{sec: average bound} we prove Proposition \ref{prop: average bound} bounding an average of $\Phi_N$. In Section \ref{sec: equi estimate} we prove Theorem \ref{thm: equidistribution} giving average equidistribution estimates for palindromes in residue classes. In Section \ref{sec: square} we prove Proposition \ref{prop: square equidistribution} giving equidistribution estimates for square moduli. We then conclude the work in Section \ref{sec: final proof} with a proof of Theorem \ref{thm: at most 6 primes}.

\section{Notation}\label{sec: notation}

Here we describe the notations used in the work, some of which we already encountered; these are restated here for the sake of clarity.

\subsection{Literary notation}

We use the asymptotic notations $X \ll Y$, $Y \gg X$, $X = O(Y)$, all signifying here that $|X| \leq C|Y|$ for some unspecified constant $C > 0$. The notation $X \asymp Y$ is used to assert that both $X \ll Y$ and $Y \ll X$, simultaneously. Dependence of the implied constants on parameters will be denoted by a subscript. The symbol $o(X)$ denotes a quantity satisfying $o(X)/X \to 0$ as $X \to \infty$.  

The letters $\mu, \tau, \varphi$ denote the M\"{o}bius, divisor and Euler's totient functions, respectively. For 
an integer $n \geq 1$, the symbol $\Omega(n)$ denotes the total number of prime factors of $n$, counting their multiplicity. The symbol $P^-(n)$ denotes the smallest prime factor of $n$. The letter $p$ is reserved to denote a prime number.

The notation $n \equiv a (q)$ denotes the assertion that $n$ is congruent to $a$ modulo $q$. We use $d \mid n$ to assert that $d$ divides $n$, and $(m,n)$ to represent the greatest common divisor (GCD) of $m$ and $n$. If $(m,n)=1$, we use the notation $\text{ord}_m(n)$ for the multiplicative order of $n$ modulo $m$.  

The symbol 
$$\sum _{a (q)}$$
denotes a summation over all the residue classes $a$ modulo $q$, whereas 
$$\sum_{a(q)}^*$$ 
denotes a sum over the invertible residue classes $a$ modulo $q$. 

Given a function $f : \mathbb{R} \to \mathbb{C}$, we denote by $\|f\|_1$ its $L^1$-norm and by $\|f\|_\infty$ its $L^{\infty}$-norm (whenever these exist).

For a real number $x$ and an integer $q \geq 1$, we use the notation $e(x) := e^{2\pi i x}$ for the complex exponential, $e_q(x) := e(x/q)$ for the additive character modulo $q$ and $\exp(x) := e^x$ for the exponential. If $x > 0$, its natural logarithm is denoted by $\log x$ and its base-$b$ logarithm by $\log_b x$.

We use $\|x\| $ to symbolize the distance from $x$ to its nearest integer; that is, $\|x\| := \min_{k \in \mathbb{Z}}|x-k|$. Moreover $\lfloor x \rfloor$ denotes the floor of $x$ and $\{x\} := x - \lfloor x \rfloor$ its fractional part. 

We use the standard trigonometric notations $\csc(x) = 1/\sin(x)$ and $\cot(x) = \cos(x)\csc(x)$.

Given a statement $E$, we use $\mathbf{1}_E$ to symbolize the indicator function of $E$; that is, $\mathbf{1}_E = 1$ if $E$ is true and $\mathbf{1}_E = 0$ if $E$ is false. For example, $\mathbf{1}_{n \equiv a (q)}$ equals $1$ if $n \equiv a (q)$ and is zero otherwise. 

Given real numbers $\alpha_1, \ldots, \alpha_N$, their {\em discrepancy} $D_N(\alpha_1, \ldots, \alpha_N)$ is defined by
\begin{equation}\label{eqn: discrepancy}
	D_N(\alpha_1, \ldots, \alpha_N) := \sup_{0 \leq c \leq d \leq 1}\left|\dfrac{ \# \left\{1 \leq n \leq N \ : \ c \leq \{\alpha_n\} < d  \right\}}{N} - (d-c)\right|
\end{equation}
and their {\em star-discrepancy} $D_N^*(\alpha_1, \ldots, \alpha_N)$ is defined as
\begin{equation}\label{eqn: star discrepancy}
	D_N^*(\alpha_1, \ldots, \alpha_N) := \sup_{0 \leq d \leq 1} \left|\dfrac{ \# \left\{1 \leq n \leq N \ : \ \{\alpha_n\} < d \right\}}{N} - d\right|.
\end{equation}

\subsection{Non-standard notation}

The letter $b$ is reserved to denote an integer larger than $1$. The notation $\mathscr{P}_b$ symbolizes the set of all positive $b$-palindromic integers. We define the sets $\mathscr{P}_b^0, \mathscr{P}_b^*$, as follows: 
\begin{align}
	\mathscr{P}_b^0 &:= \left\{ n \in \mathscr{P}_b  \ : \ \lfloor \log_b(n) \rfloor \equiv 0 (2) \right\},
	\label{def of P0}
	\\
	\mathscr{P}_b^* &:= \left\{ n \in \mathscr{P}_b \ : \ (n,b^3-b)=1 \right\}. \label{def of P*}
\end{align}
From the definition above one observes that $\mathscr{P}_b^0$ is comprised exactly of all positive $b$-palindromes with an odd number of digits (in their $b$-adic expansion). 
Since every $b$-palindrome with an even number of digits is divisible by $b + 1$, and $b + 1$ is a divisor of $b^3-b$, it follows $\mathscr{P}_b^* \subset \mathscr{P}_b^0$. 
In fact
\begin{align*}
	\mathscr{P}_b^0 &= \bigcup_{N=0}^\infty \Pi_b(2N),\\
	\mathscr{P}_b^* &= \bigcup_{N=0}^\infty \Pi_b^*(2N),
\end{align*}
where for an integer $N \geq 0$, 
\begin{align}
	\Pi_b(N) &:= \mathscr{P}_b \cap \left[b^N, b^{N+1}\right), \label{eqn: def of pi} \\
	\Pi_b^*(N) &:= \left\{ n \in \Pi_b(N) \ : \ (n,b^3-b)=1  \right\}. \label{eqn: def of pi star}
\end{align}

Given a set $\mathscr{A}$ of integers, a real number $x$ and integers $a,q$, we define the sets
\begin{align*}
	\mathscr{A}(x) &:= \mathscr{A} \cap \left\{ n \in \mathbb{Z} \ : \ n \leq x\right\},\\
	\mathscr{A}(x,a,q) &:= \left\{n \in \mathscr{A}(x) \ : \ n \equiv a (q)  \right\}.
\end{align*} 
Thus for instance 
\begin{align}
	\mathscr{P}_b^*(x) &:= \mathscr{P}_b^* \cap [1,x]\label{def: P star},\\
	\mathscr{P}_b^*(x,a,q) &:= \left\{ n \in \mathscr{P}_b^*(x) \ :\ n \equiv a (q)   \right\},\label{def: P star cong}
\end{align}
with our notation. 

Given a real number $\alpha$ and an integer $N\geq 0$, we define
\begin{equation}\label{eqn: phi def}
	\phi_b(\alpha) := \left|\sum_{0 \leq m < b}e(\alpha m)\right|
\end{equation}
and
\begin{equation}\label{eqn: def of Phi}
	\Phi_N(\alpha) := \prod_{1 \leq n < N} \phi_b\left(\alpha\left(b^n + b^{2N-n}\right)\right)
\end{equation}
with the convention $\Phi_N(\alpha) = 1$ for $N \leq 1$. The definition of $\Phi_N$ also depends on $b$ but for the sake of brevity we omit it on the left hand side above. 

Finally given integers $n,K,b$ with $K,b \geq 2$, we let 
\begin{equation}\label{eqn: num of compo}
	r(n;K,b) := \#\left\{ (m_1, \ldots, m_K) \in \left([0,b) \cap \mathbb{Z}\right)^K \ : \ m_1 + \cdots + m_K = n\right\} 
\end{equation}
be the number of additive compositions of $n$ into $K$ integers $m_j$, each satisfying $0 \leq m_j < b$.

%%%%%%%%%%%%%%%%%%%%%%%%%%%%%%%%%%%%%%%%%%%%%%%%%%%%%%%%%%%%%%%%%%%%%%%%%%%%%%%%%%%%%%%%%%%%%%%%%%%%%%%%%%%%%%%%%%%%%%%%%%%%%%%%%%%%%%%%%%%%

\section{Technical lemmas}\label{sec: preparatory lemmas}

In this section we go over several technical tools needed in the following two sections. The first two are classical in the theory of uniform distribution of sequences modulo $1$.

\begin{lem}[Koksma-Hlawka]\label{lem: Koksma-Hlawka}
Suppose $f : [0,1] \to \mathbb{R}$ is a bounded integrable function of bounded variation $V(f)$. Then for any real numbers $\alpha_1, \ldots, \alpha_N$ with $N \geq 1$ an integer,
$$
\left|\dfrac{1}{N} \sum_{n=1}^N f(\{\alpha_n\}) - \int_0^1 f(x)dx  \right| \leq V(f)D_N^*(\alpha_1, \ldots, \alpha_N), 
$$
where $D_N^*(\alpha_1, \ldots, \alpha_N)$ is their star-discrepancy as defined in (\ref{eqn: star discrepancy}).
\end{lem}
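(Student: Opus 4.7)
The plan is to realize both quantities inside the absolute value as Riemann--Stieltjes integrals and then use integration by parts, with the star-discrepancy controlling the difference between the empirical distribution of the $\{\alpha_n\}$ and the uniform distribution.

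First I would introduce the empirical distribution function $F_N(t) := \frac{1}{N}\#\{1 \leq n \leq N : \{\alpha_n\} < t\}$ for $t \in [0,1]$, with the convention $F_N(1) = 1$. By the very definition (\ref{eqn: star discrepancy}) of the star-discrepancy one has the pointwise bound $|F_N(t) - t| \leq D_N^*(\alpha_1,\ldots,\alpha_N)$ for every $t \in [0,1]$, and by construction $F_N(0) = 0$ and $F_N(1) = 1$, so the function $U_N(t) := F_N(t) - t$ vanishes at both endpoints.

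Next I would write
\begin{equation*}
\frac{1}{N}\sum_{n=1}^N f(\{\alpha_n\}) - \int_0^1 f(x)\,dx = \int_0^1 f(t)\,dU_N(t),
\end{equation*}
the first term being the Riemann--Stieltjes integral of $f$ against the atomic measure $dF_N$ (which places mass $1/N$ at each point $\{\alpha_n\}$), and the second being $\int_0^1 f(x)\,dx$. Since $f$ is of bounded variation I would reduce to the case where $f$ is monotone by writing $f = f_1 - f_2$ with $f_1, f_2$ nondecreasing and $V(f_1) + V(f_2) = V(f)$. For monotone $f$, the Riemann--Stieltjes integral $\int_0^1 U_N(t)\,df(t)$ makes sense (the integrator is monotone and the integrand $U_N$ has only finitely many jumps, none of which coincide with jumps of $f$ after a harmless perturbation, or one first approximates $f$ by smooth monotone functions). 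Integration by parts then gives
\begin{equation*}
\int_0^1 f(t)\,dU_N(t) = \bigl[f(t)\,U_N(t)\bigr]_0^1 - \int_0^1 U_N(t)\,df(t) = -\int_0^1 U_N(t)\,df(t),
\end{equation*}
the boundary contribution vanishing thanks to $U_N(0)=U_N(1)=0$.

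Finally, the monotone case is concluded by the estimate
\begin{equation*}
\left|\int_0^1 U_N(t)\,df(t)\right| \leq D_N^*(\alpha_1,\ldots,\alpha_N)\int_0^1 df(t) = D_N^*(\alpha_1,\ldots,\alpha_N)\bigl(f(1)-f(0)\bigr),
\end{equation*}
and summing this over the monotone pieces $f_1, f_2$ yields the claimed bound with $V(f)$ on the right-hand side. The step I expect to require the most care is the justification of the Riemann--Stieltjes integration by parts, since both $F_N$ and a bounded-variation $f$ may harbor jump discontinuities; the cleanest route is to first establish the identity for continuous (or even smooth) monotone $f$ by classical integration by parts and then pass to the general bounded-variation case by a standard approximation argument, so that no common discontinuities appear in the integration by parts step.
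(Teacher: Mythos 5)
Your argument is correct, and it is essentially the classical proof of Koksma's inequality; the paper itself gives no proof, deferring to Kuipers--Niederreiter, and what you write is in substance the argument found there. The one genuinely delicate point is the one you flag: the Riemann--Stieltjes integration by parts (and even the identity $\int_0^1 f\,dF_N = \frac1N\sum_n f(\{\alpha_n\})$) can fail or be ambiguous when $f$ jumps at one of the sample points $\{\alpha_n\}$. Your proposed fix works, but note that the approximation must respect the values of $f$ \emph{at the sample points}, since changing $f$ there changes the sum; a mollification of a monotone $f$ need not converge to $f(\{\alpha_n\})$ at a jump. A clean way to do it: for monotone $f$ take the piecewise-linear interpolant $f_k$ of $f$ on a mesh that includes all the points $\{\alpha_n\}$, so that $f_k$ is continuous and monotone, $f_k(\{\alpha_n\})=f(\{\alpha_n\})$ exactly, $f_k(1)-f_k(0)=f(1)-f(0)$, and $\int_0^1 f_k\to\int_0^1 f$ as the mesh shrinks; apply your integration-by-parts argument to $f_k$ (where no common discontinuities occur, since $f_k$ is continuous) and pass to the limit, then conclude for general $f$ via the Jordan decomposition $f=f_1-f_2$ with $V(f)=V(f_1)+V(f_2)$, exactly as you propose. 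Alternatively, the textbook route orders the sample points $x_1\le\cdots\le x_N$ and writes the error as $\sum_{n=0}^{N}\int_{x_n}^{x_{n+1}}\left(x-\tfrac{n}{N}\right)df(x)$, integrating a continuous function against $df$, which sidesteps the common-discontinuity issue altogether; either way the bound $\sup_t|F_N(t)-t|\le D_N^*$ finishes the proof as you say.
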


\begin{proof}
See \cite{Kuipers}. 
\end{proof}

\begin{lem}[Erd\H{o}s-Tur\'{a}n]\label{lem: erdos-turan}
Let $\alpha_1, \ldots, \alpha_N$ be real numbers with $N \geq 1$ an integer. Then
$$
D_N(\alpha_1, \ldots, \alpha_N) \ll \dfrac{1}{H} + \sum_{1 \leq h \leq H} \dfrac{1}{h} \left|\dfrac{1}{N}\sum_{n=1}^N e(h\alpha_n)\right|
$$
for any $H\geq 1$,
where $D_N(\alpha_1, \ldots, \alpha_N)$ is the discrepancy defined in (\ref{eqn: discrepancy}).
\end{lem}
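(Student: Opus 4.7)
The plan is to reduce the discrepancy bound to estimates on exponential sums by bracketing the indicator function of an arbitrary subinterval of $[0,1]$ by trigonometric polynomials of low degree. Recall that by definition
$$D_N(\alpha_1,\ldots,\alpha_N) = \sup_{0 \leq c \leq d \leq 1}\left|\frac{1}{N}\sum_{n=1}^N \mathbf{1}_{[c,d)}(\{\alpha_n\}) - (d-c)\right|,$$
so it suffices, for each fixed interval $I = [c,d) \subset [0,1]$, to bound the corresponding expression by the quantity on the right hand side of the asserted inequality, uniformly in $I$.

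First I would invoke the classical Beurling--Selberg (or Vaaler) construction to produce two trigonometric polynomials $S_H^{\pm}$ of degree at most $H$, depending on $c,d$, such that $S_H^{-}(x) \leq \mathbf{1}_I(x) \leq S_H^{+}(x)$ for all $x \in \mathbb{R}/\mathbb{Z}$, with $\int_0^1 S_H^{\pm}(x)\,dx = (d-c) + O(1/H)$ and Fourier coefficients $\widehat{S_H^{\pm}}(h)$ satisfying $|\widehat{S_H^{\pm}}(h)| \ll 1/|h|$ for $0 < |h| \leq H$. Such majorants and minorants are standard objects in analytic number theory (see Vaaler's paper, or Chapter~1 of Montgomery's \emph{Ten Lectures on the Interface Between Analytic Number Theory and Harmonic Analysis}).

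Next I would evaluate $\frac{1}{N}\sum_{n=1}^N S_H^{\pm}(\{\alpha_n\})$ by expanding $S_H^{\pm}$ as a Fourier series. Since $S_H^{\pm}$ has degree at most $H$, we obtain
$$\frac{1}{N}\sum_{n=1}^N S_H^{\pm}(\{\alpha_n\}) = (d-c) + O\!\left(\frac{1}{H}\right) + \sum_{1 \leq |h| \leq H} \widehat{S_H^{\pm}}(h) \cdot \frac{1}{N}\sum_{n=1}^N e(h\alpha_n).$$
Combining this with $S_H^{-} \leq \mathbf{1}_I \leq S_H^{+}$ and using $|\widehat{S_H^{\pm}}(h)| \ll 1/|h|$ together with $|e(-h\alpha_n)| = |e(h\alpha_n)|$ to fold the negative indices onto the positive ones yields
$$\left|\frac{1}{N}\sum_{n=1}^N \mathbf{1}_{I}(\{\alpha_n\}) - (d-c)\right| \ll \frac{1}{H} + \sum_{1 \leq h \leq H} \frac{1}{h}\left|\frac{1}{N}\sum_{n=1}^N e(h\alpha_n)\right|.$$
Taking the supremum over all subintervals $[c,d) \subset [0,1]$ gives the claim, since the right hand side no longer depends on $c$ or $d$.

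The main obstacle, conceptually, is the construction in the first step: one needs majorants and minorants that are simultaneously of low degree and have small $L^1$ gap from $\mathbf{1}_I$, with well-behaved Fourier coefficients. This is precisely the content of the Beurling--Selberg extremal problem, which I would quote as a black box rather than re-derive. Everything else is routine Fourier analysis. Once the bracketing polynomials are in hand, the argument is a short computation.
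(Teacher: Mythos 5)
Your proposal is correct. Note that the paper does not actually prove this lemma: it is quoted as a classical fact with a pointer to Kuipers--Niederreiter, where the inequality is established by the original Erd\H{o}s--Tur\'{a}n-style argument (smoothing the indicator of an interval by convolution with Fej\'{e}r-type kernels and elementary estimates). Your route through the Beurling--Selberg/Vaaler extremal majorants and minorants is the standard modern alternative (it is exactly how the inequality is derived in Chapter~1 of Montgomery's \emph{Ten Lectures}), and it is sound: the majorant/minorant $S_H^{\pm}$ of degree at most $H$ satisfies $\widehat{S_H^{\pm}}(0)=(d-c)+O(1/H)$ and $|\widehat{S_H^{\pm}}(h)|\leq \frac{1}{H+1}+\min\left(d-c,\frac{1}{\pi|h|}\right)\ll 1/|h|$ for $1\leq |h|\leq H$, uniformly in $c,d$, which is all your computation needs; folding $-h$ onto $h$ by conjugation is fine, and the final bound is indeed uniform in the interval, so the supremum can be taken at the end. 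The only caveats are cosmetic: you must take the construction for intervals modulo $1$ (so that $d=1$ and wrap-around cause no issue), and if $H$ is not an integer one applies the construction with $\lfloor H\rfloor$ and absorbs the difference into the implied constant. Compared with the classical proof cited in the paper, your approach trades a self-contained elementary argument for a quoted extremal-function black box, and in exchange gives a cleaner derivation with sharper numerical constants; for the purposes of this paper, where only the stated $\ll$ bound is used, either route is fully adequate.
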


\begin{proof}
	See \cite{Kuipers}. 
\end{proof}

For the following, one says that a function $f : \mathbb{R} \to \mathbb{C}$ is {\em smooth} if $f$ is infinitely differentiable; that is, $f$ has continuous derivatives of all orders. 

\begin{lem}[Smooth exponential sum bounds]\label{lem: smooth exp sums}
	Let $f : \mathbb{R} \to \mathbb{C}$ be a smooth compactly supported function and let $\alpha$ be a real number. Then 	
	\begin{equation}\label{eqn: trivial bound}
		\left|\sum_{n \in \mathbb{Z}}f(n)e(\alpha n) \right| \leq \|f\|_1 + \dfrac{\|f'\|_1}{2}
	\end{equation}
	and
	\begin{equation}\label{eqn: sum by parts bound}
		\left|\sum_{n \in \mathbb{Z}}f(n)e(\alpha n) \right| \leq \dfrac{\|f^{(k)}\|_1}{|2 \sin(\pi \alpha)|^k}
	\end{equation}
	for all natural numbers $k\geq 1$.
\end{lem}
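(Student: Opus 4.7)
The plan is to treat the two inequalities separately; both are standard computations, with (\ref{eqn: trivial bound}) arising as a pointwise sampling bound and (\ref{eqn: sum by parts bound}) via iterated discrete summation by parts.

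For (\ref{eqn: trivial bound}), I would first apply the triangle inequality to discard the phase, reducing matters to showing $\sum_{n \in \mathbb{Z}} |f(n)| \leq \|f\|_1 + \|f'\|_1/2$. For each integer $n$, integrating the identity $f(n) = f(x) - \int_n^x f'(t)\, dt$ over $x \in [n-1/2,\, n+1/2]$ yields
$$
f(n) \;=\; \int_{n-1/2}^{n+1/2} f(x)\, dx \;-\; \int_{n-1/2}^{n+1/2}\!\int_n^x f'(t)\, dt\, dx.
$$
Swapping the order of integration in the double integral produces a weight $|t - n| \leq 1/2$, so the triangle inequality gives $|f(n)| \leq \int_{n-1/2}^{n+1/2} |f(x)|\, dx + \tfrac{1}{2} \int_{n-1/2}^{n+1/2} |f'(t)|\, dt$. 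Summing over $n \in \mathbb{Z}$ and using that the intervals $[n-1/2,\, n+1/2]$ tile $\mathbb{R}$ (up to a null set) gives the desired bound.

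For (\ref{eqn: sum by parts bound}), set $S := \sum_n f(n) e(\alpha n)$. A discrete summation by parts based on the shift $n \mapsto n-1$ yields $(1 - e(\alpha))\, S = \sum_n \nabla f(n)\, e(\alpha n)$, where $\nabla f(n) := f(n) - f(n-1)$. Iterating $k$ times produces
$$
(1 - e(\alpha))^k\, S \;=\; \sum_{n \in \mathbb{Z}} \nabla^k f(n)\, e(\alpha n).
$$
Induction on $k$ combined with the fundamental theorem of calculus gives the representation $\nabla^k f(n) = \int_{[0,1]^k} f^{(k)}(n - s_1 - \cdots - s_k)\, ds_1 \cdots ds_k$. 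Taking absolute values, summing over $n$, and observing that for each fixed $(s_1, \ldots, s_{k-1})$ the translated intervals $[n - s_1 - \cdots - s_{k-1} - 1,\ n - s_1 - \cdots - s_{k-1}]$ partition $\mathbb{R}$ as $n$ varies over $\mathbb{Z}$, one obtains $\sum_n |\nabla^k f(n)| \leq \|f^{(k)}\|_1$. Dividing through by $|1 - e(\alpha)|^k = |2\sin(\pi\alpha)|^k$ completes the proof.

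There is no substantive obstacle here; the only minor subtleties are that the symmetric choice of interval $[n-1/2,\, n+1/2]$ in the first part is essential to obtain the constant $1/2$ rather than the weaker $1$, and that the iterated backward differences in the second part should be controlled via their integral representation rather than through the combinatorial expansion $\nabla^k f(n) = \sum_{j=0}^k (-1)^j \binom{k}{j} f(n-j)$, which would cost binomial blow-ups instead of returning the clean $L^1$ norm of $f^{(k)}$.
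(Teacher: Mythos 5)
Your proof is correct. Note that the paper does not actually prove this lemma itself --- it only cites Lemma 3.1 of Tao's paper \cite{Tao} --- so your argument is a genuinely self-contained substitute for a step the paper delegates to a reference. Both halves check out: the symmetric sampling identity $f(n)=\int_{n-1/2}^{n+1/2}f(x)\,dx-\int_{n-1/2}^{n+1/2}\int_n^x f'(t)\,dt\,dx$ produces, after Fubini, a weight $|t-n|\le 1/2$ and hence the constant $1/2$ in (\ref{eqn: trivial bound}); and the shift identity $(1-e(\alpha))S=\sum_n\nabla f(n)e(\alpha n)$, iterated $k$ times and combined with the representation $\nabla^k f(n)=\int_{[0,1]^k}f^{(k)}(n-s_1-\cdots-s_k)\,ds_1\cdots ds_k$, gives $\sum_n|\nabla^k f(n)|\le\|f^{(k)}\|_1$ by the tiling argument, after which $|1-e(\alpha)|=2|\sin(\pi\alpha)|$ yields (\ref{eqn: sum by parts bound}). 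All interchanges of sum and integral are justified by compact support, and your remark that the combinatorial expansion of $\nabla^k$ would only give a $2^k$-type bound rather than $\|f^{(k)}\|_1$ is exactly right. The one trivial caveat worth a word in a write-up: when $\alpha\in\mathbb{Z}$ the denominator in (\ref{eqn: sum by parts bound}) vanishes and the bound is vacuous, so the division by $(1-e(\alpha))^k$ should be performed only for $\alpha\notin\mathbb{Z}$; this does not affect anything, since that is the only case in which the inequality has content.
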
	

\begin{proof}
	See for instance Lemma 3.1 in \cite{Tao}. 
\end{proof}	

The following lemma appears, either implicitly or explicitly, on several works on digital functions. See for instance \cite{Bourgain, Col, maud, Maynard, Morgenbesser}. We give a proof for the convenience of the reader.

\begin{lem}[Ergodic-type integral bound]\label{lem: integral of product}
	Let $f : \mathbb{R} \to \mathbb{R}_0^+$ be a bounded, locally integrable, $1$-periodic function. Then for any integer $N \geq 1$,
	$$
	\int_{0}^1 \prod_{0 \leq n < N} f(\alpha b^n)d\alpha \leq \left(\sup_{0 \leq\theta \leq 1}\dfrac{1}{b}\sum_{n(b)} f\left(\dfrac{n + \theta}{b} \right)\right)^N. 
	$$	
\end{lem}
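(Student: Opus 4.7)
The plan is to prove this by induction on $N$, reducing $I_N := \int_0^1 \prod_{0 \leq n < N} f(\alpha b^n)\, d\alpha$ to $I_{N-1}$ times the supremum quantity on the right. The base case $I_0 = 1$ is immediate from the empty-product convention, so everything hinges on establishing the contraction $I_N \leq C \cdot I_{N-1}$ with $C := \sup_{0 \leq \theta \leq 1} \tfrac{1}{b}\sum_{n(b)} f\bigl((n+\theta)/b\bigr)$.

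The key step is a standard $b$-adic substitution. First I would partition $[0,1)$ into the $b$ intervals $[k/b,(k+1)/b)$ for $0 \leq k < b$ and on each of them make the change of variables $\alpha = (k+\theta)/b$, $d\alpha = d\theta/b$. For $n = 0$ the factor becomes $f((k+\theta)/b)$, while for each $n \geq 1$ we have $\alpha b^n = b^{n-1}(k+\theta) = b^{n-1}\theta + (\text{integer})$, so $1$-periodicity of $f$ makes the factor equal $f(b^{n-1}\theta)$, which no longer depends on $k$. Summing over $k$ then yields
$$
I_N \;=\; \int_0^1 \left(\frac{1}{b}\sum_{k(b)} f\!\left(\frac{k+\theta}{b}\right)\right)\prod_{0 \leq m < N-1} f(b^m \theta)\, d\theta.
$$

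At this stage I would use the nonnegativity of $f$ (this is exactly where the hypothesis $f \geq 0$ enters) to bound the parenthesized average by $C$ pointwise and pull it outside the integral. The remaining integral is precisely $I_{N-1}$, giving $I_N \leq C \cdot I_{N-1}$. Iterating, or equivalently induction on $N$, produces $I_N \leq C^N$, which is the claimed bound.

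I do not expect any real obstacle. The only conceptual point worth emphasizing is that the $1$-periodicity of $f$ is what collapses the $n \geq 1$ factors to functions of $\theta$ alone after the substitution, cleanly separating off a single averaging operator acting only on the $n=0$ factor; the nonnegativity of $f$ is then what legitimizes replacing that average by its supremum while keeping the inequality.
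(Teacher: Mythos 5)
Your argument is correct and is essentially the paper's own proof: the same induction, the same $b$-adic substitution $\alpha=(k+\theta)/b$ with $1$-periodicity collapsing the $n\geq 1$ factors to $f(b^{n-1}\theta)$, and the same pointwise bounding of the resulting average by its supremum (using $f\geq 0$). The only cosmetic difference is that you start the induction at the empty product $I_0=1$ while the paper verifies $N=1$ directly, which changes nothing.
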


\begin{proof}
	By induction on $N \geq 1$. For $N=1$, we have
	$$
	\int_0^1 f(\alpha)d\alpha = \dfrac{1}{b}\sum_{0 \leq h < b} \int_0^{1} f\left(\dfrac{h + \theta}{b} \right)d\theta
	$$
	and the claim follows. Suppose now that the claim holds for some $N \geq 1$. We have
	\begin{align*}
		\int_0^1 \prod_{0 \leq n < N+1} f(\alpha b^n)d\alpha 
		&=\dfrac{1}{b}\sum_{0 \leq h < b} \int_0^{1} \prod_{0 \leq n < N+1} f\left(\left(\dfrac{h + \theta}{b}\right) b^n\right)d\theta
		\\
		&=
		\int_0^{1} \dfrac{1}{b}\sum_{0 \leq h < b}f\left(\dfrac{h + \theta}{b} \right) \prod_{0 \leq n < N}f\left(\theta b^n\right)d\theta.
	\end{align*}
	The last equality holds since $f$ is $1$-periodic by assumption. The above is
	$$
	\leq \sup_{0 \leq \theta \leq 1} \dfrac{1}{b}\sum_{0 \leq h < b}f\left(\dfrac{h + \theta}{b} \right)	\int_0^{1}\prod_{0 \leq n < N}f\left(\alpha b^n\right)d\alpha
	$$
	and the claim for $N+1$ now follows from the inductive hypothesis.	
\end{proof}

The following lemma gives asymptotics for the number of compositions with some restrictions. We were unable to obtain a reference in the literature and we thus give a proof. If one allows the error term to depend on $n$, one should be able to considerably improve it with a more detailed analysis of the integrals involved. Nevertheless the error term here suffices for our purposes.

\begin{lem}[Compositions with restrictions]\label{lem:comp2}
	Let $n,K,b$ be integers with $K,b \geq 2$ and let $r(n;K,b)$ be as defined in (\ref{eqn: num of compo}). We have 
	$$
	\dfrac{r(n;K,b)}{b^K} =  \sqrt{\dfrac{6}{\pi(b^2-1)K}}\exp\left(-\dfrac{6}{(b^2-1)K}\left(n - \dfrac{(b-1)K}{2}\right)^2\right) + O\left(\dfrac{1}{bK^{3/2}}\right).
	$$
\end{lem}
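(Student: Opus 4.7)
The statement is a local central limit theorem for the sum of $K$ i.i.d.\ uniform variables on $\{0,1,\ldots,b-1\}$: the ratio $r(n;K,b)/b^K$ is the probability that such a sum equals $n$, and the Gaussian in the claim is its normal approximation (mean $K(b-1)/2$, variance $K(b^2-1)/12$). I would prove it by a direct circle-method argument. Orthogonality yields
\begin{align*}
r(n;K,b)=\int_{-1/2}^{1/2}\psi_b(\alpha)^K e(-\alpha n)\,d\alpha,
\qquad
\psi_b(\alpha)=\sum_{0\le m<b}e(\alpha m).
\end{align*}
Factoring $\psi_b(\alpha)=e(\alpha(b-1)/2)\,D_b(\alpha)$ with Dirichlet kernel $D_b(\alpha)=\sin(\pi b\alpha)/\sin(\pi\alpha)$, setting $n'=n-K(b-1)/2$ and normalizing $g(\alpha)=D_b(\alpha)/b$ then gives
\begin{align*}
\frac{r(n;K,b)}{b^K}=\int_{-1/2}^{1/2}g(\alpha)^K e(-\alpha n')\,d\alpha.
\end{align*}
The function $g$ is even and real with $g(0)=1$, and a direct expansion yields $g(\alpha)=1-\pi^2(b^2-1)\alpha^2/6+O((b\alpha)^4)$ with an absolute implied constant, uniform in $b$ for $|b\alpha|$ bounded.

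Next I would split the integral at a cutoff $\delta$ of order $K^{-1/2+\eta}/b$ for a fixed small $\eta>0$, so that $K(b\delta)^2$ grows to infinity while $K(b\delta)^4$ tends to zero. On the central arc $|\alpha|\le\delta$, exponentiating the Taylor expansion gives
\begin{align*}
g(\alpha)^K=\exp\!\left(-\tfrac{K\pi^2(b^2-1)}{6}\alpha^2\right)\bigl(1+O(K(b\alpha)^4)\bigr),
\end{align*}
and the Gaussian Fourier transform identity $\int_{\mathbb R}e^{-A\alpha^2}e(-\alpha n')\,d\alpha=\sqrt{\pi/A}\exp(-\pi^2(n')^2/A)$ applied with $A=K\pi^2(b^2-1)/6$ supplies the stated main term $\sqrt{6/(\pi(b^2-1)K)}\exp(-6(n')^2/((b^2-1)K))$. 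After the rescaling $\alpha=\beta/(b\sqrt K)$, the Edgeworth remainder integrates against the Gaussian to contribute $\asymp Kb^4/A^{5/2}\asymp 1/(bK^{3/2})$ uniformly in $n'$, and completing the truncated Gaussian integral to all of $\mathbb R$ introduces only an $\exp(-cKb^2\delta^2)$ tail, super-polynomially small by the choice of $\delta$.

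On the complementary region I would split further. For $\delta<|\alpha|\le 1/(2b)$, the sine product formula $\sin(y)/y=\prod_{k\ge 1}(1-y^2/(k\pi)^2)$ combined with $\log(1-x)\le-x$ gives $\log(\sin(y)/y)\le-y^2/6$ for $|y|\le\pi$, whence $g(\alpha)\le\exp(-c(b^2-1)\alpha^2)$ and the contribution is $\ll\exp(-cKb^2\delta^2)/(b\delta)$, negligible. For $1/(2b)<|\alpha|\le 1/2$, the bound $|g(\alpha)|\le(b|\sin\pi\alpha|)^{-1}$ shows $|g|$ is at most an absolute constant $M<1$ (the secondary lobes decay), and Parseval's identity $\int|g|^2\,d\alpha=1/b$ then bounds the contribution by $\ll M^{K-2}/b$. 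All tail errors are absorbed into $O(1/(bK^{3/2}))$ once $K$ is moderately large; for bounded $K$ the asymptotic holds by adjusting the implicit constant against the trivial bound $r(n;K,b)\le b^K$.

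The principal obstacle is tuning $\delta$ so that the various error sources are simultaneously controlled in both $K$ and $b$: the Edgeworth correction demands $K(b\delta)^4$ small while the Gaussian and intermediate-range tails demand $Kb^2\delta^2$ large, and this must be done while keeping the implied constants independent of $b$. A secondary technical point is verifying $g(\alpha)\le\exp(-c(b^2-1)\alpha^2)$ throughout the full range $|b\alpha|\le 1/2$, needed for the intermediate tail. Uniformity in $n$ is automatic from the Fourier framework, since both the main term and the error estimates hold for all $n'$.
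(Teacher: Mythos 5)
Your proposal is correct in substance and follows essentially the same route as the paper: Fourier inversion, the Dirichlet--kernel factorization, a Gaussian approximation on a central arc whose main term comes from the Fourier transform of the Gaussian, and a quartic correction producing the $O(1/(bK^{3/2}))$ error. The only real divergence is in the complementary range: the paper keeps a single cutoff at $\delta/b$ with a \emph{fixed} $\tfrac12<\delta<1$ (the sign of the higher-order terms in the log-expansion makes the $1+O(K b^4\alpha^4)$ step valid there without shrinking the arc), and on $\delta/b<|\alpha|\le 1/2$ simply bounds $|\psi_b(\alpha)|\le \tfrac1{2|\alpha|}$, so that $\int_{\delta/b}^{1/2}\alpha^{-K}\,d\alpha\ll b^{K-1}(2\delta)^{-(K-1)}/K\ll b^{K-1}K^{-3/2}$ in one line; you instead shrink the central arc to $\asymp K^{-1/2+\eta}/b$, use the pointwise bound $g(\alpha)\le\exp(-c(b^2-1)\alpha^2)$ (this is Lemma \ref{lem: exponential bound}) on the intermediate range, and a sup-norm/Parseval bound on $[1/(2b),1/2]$. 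Both work; the paper's tail treatment is a bit more economical.

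One bookkeeping point you should repair: since the target error is $O(1/(bK^{3/2}))$ with an absolute constant, every error term must carry a factor $1/b$, uniformly even when $b$ is enormous compared to $K$. Your quartic term does ($Kb^4A^{-5/2}\asymp 1/(bK^{3/2})$), and the Gaussian tails do as well once you evaluate them as $\ll e^{-cKb^2\delta^2}/(b\sqrt{K})$; but the forms you wrote, $e^{-cKb^2\delta^2}$ and $e^{-cKb^2\delta^2}/(b\delta)$, carry no $1/b$ at all (note $1/(b\delta)\asymp K^{1/2-\eta}$), so as stated they are not $O(1/(bK^{3/2}))$ for $b$ very large. Your outer-range bound $M^{K-2}/b$ is fine, and indeed $M=1/\sqrt2$ works since $b\sin(\pi\alpha)\ge b\sin(\pi/(2b))\ge\sqrt2$ for $\alpha\ge 1/(2b)$ and $b\ge2$. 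Finally, the fallback for bounded $K$ via the trivial bound $r(n;K,b)\le b^K$ only yields an $O(1)$ error, not $O(1/b)$; use $r(n;K,b)\le b^{K-1}$ instead (all but one digit determine the last), or note that once the tails carry the $1/b$ no separate case is needed because $\sup_{K\ge2}K^{3/2}e^{-cK^{2\eta}}<\infty$.
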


\begin{proof}
	An application of the identity
	\begin{equation}\label{eqn: orth identity}
	\mathbf{1}_{\ell =0} = \int_{-1/2}^{1/2} e(\ell \alpha)d\alpha
	\end{equation}
	valid for integers $\ell$, yields
	$$
	r(n;K,b) = \int_{-1/2}^{1/2} e(-\alpha n) \left(\sum_{0 \leq m < b}e(\alpha m)\right)^Kd\alpha.
	$$
	In what follows $\delta$ denotes a fixed real number satisfying $1/2 < \delta < 1$. We allow implied constants to depend on $\delta$. To evaluate the integral above we split it into two pieces, one over $|\alpha| \leq \delta/b$ and the other over $\delta/b < |\alpha| \leq 1/2$. We treat each separately. 
	
	When $\alpha \not\in \mathbb{Z}$, the sum inside the brackets above equals
	$$
	\dfrac{e(\alpha b)-1}{e(\alpha)-1} = e\left(\dfrac{b-1}{2}\alpha\right) \dfrac{\sin(\pi \alpha b)}{\sin(\pi \alpha)}.
	$$
	By Euler's product formula,
	$$
	\dfrac{\sin(\pi \alpha b)}{\sin(\pi \alpha)} = b \prod_{m=1}^\infty \left(1 - \dfrac{\alpha^2 b^2}{m^2}\right)\left(1 - \dfrac{\alpha^2}{m^2} \right)^{-1}.
	$$
	If $|\alpha| < 1/b$, the logarithm of the product over $m\geq 1$ equals
	\begin{equation}\label{eqn: log expansion in terms of zeta}
	-\sum_{m\geq 1} \sum_{k\geq 1} \dfrac{(b^{2k}-1)\alpha^{2k}}{m^{2k}k} = - \sum_{k\geq 1} \dfrac{\zeta(2k)}{k}\left(b^{2k}-1\right)\alpha^{2k},
	\end{equation}
	where $\zeta$ is the Riemann zeta function. For $|\alpha| \leq \delta/ b$, the terms with $k\geq 2$ contribute $\ll \alpha^4 b^4$ to the sum. Then it follows 
	$$
	\left(\dfrac{\sin(\pi \alpha b)}{\sin(\pi \alpha)}\right)^K = b^Ke^{-\zeta(2)(b^2-1)K\alpha^2}\left(1 + O\left(\alpha^4  b^4 K\right)\right)
	$$
	for $0 < |\alpha| \leq \delta/b$. 
	Thus, after dividing by $b^K$,
	\begin{align*}
	&\dfrac{1}{b^K}\int_{-\delta/b}^{\delta/b} e(-\alpha n) \left(\sum_{0 \leq m < b} e(\alpha m)\right)^Kd\alpha\\
	 &= \int_{-\delta/b}^{\delta/b} e\left(\dfrac{b-1}{2}\alpha K - \alpha n\right)e^{-\zeta(2)(b^2-1)K\alpha^2}\left(1 + O\left(\alpha^4  b^4 K\right)\right)d\alpha\\
	&=
	\int_{\mathbb{R}} e\left(\dfrac{b-1}{2}\alpha K - \alpha n\right)e^{-\zeta(2)(b^2-1)K\alpha^2} d\alpha \\
	&\qquad 
	+ O\left(b^4 K\int_{\mathbb{R}}e^{-\zeta(2)(b^2-1)K\alpha^2}\alpha^4 d\alpha + \int_{\delta/b}^\infty e^{-\zeta(2)(b^2-1)K\alpha^2}d\alpha\right).
	\end{align*}
	The quantity inside of the $O$-brackets can be shown to be $\ll b^{-1}K^{-3/2}$ after substitutions of variables. By the well-known formula
	$$
	\int_{\mathbb{R}}e^{-at^2}e(-\theta t)dt = \sqrt{\dfrac{\pi}{a}} e^{-\pi^2\theta^2/a} \hspace{2em} (\theta \in \mathbb{R}, \ a > 0) 
	$$
	for the Fourier transform of the Gaussian $e^{-at^2}$ and the fact $\zeta(2) = \pi^2/6$, the integral outside of the $O$-brackets equals
	$$
	\sqrt{\dfrac{6}{\pi (b^2-1)K}} \exp\left(-\dfrac{6}{(b^2-1)K}\left(n - \dfrac{(b-1)K}{2}\right)^2\right).
	$$

	To conclude it suffices to show that
	$$
	\int_{\delta/b < |\alpha| \leq 1/2} e(-\alpha n)\left(\sum_{0 \leq m < b}e(\alpha m)\right)^Kd\alpha \ll \dfrac{b^{K-1}}{K^{3/2}}.
	$$
	The sum inside the brackets is at most $|\csc(\pi \alpha)| \leq 1/2|\alpha|$ (for $|\alpha| \leq 1/2$) in absolute value. Then the absolute value of the integral above is 
	$$\leq 2^{1-K}\int_{\delta/b }^{1/2}\alpha^{-K}d\alpha \leq \dfrac{b^{K-1}}{(2\delta)^{K-1}(K-1)} \ll \dfrac{b^{K-1}}{K^{3/2}}
	$$
	for $\delta > 1/2$ fixed. 
\end{proof}

We also need some trigonometric facts involving the function $\phi_b$ defined as in (\ref{eqn: phi def}). It will be useful to note that
$$
\phi_b(\alpha) = \begin{cases}
	b &\mbox{ if } \alpha \in \mathbb{Z},\\
	|\sin(\pi \alpha b)/\sin(\pi \alpha)| &\mbox{ otherwise.}
\end{cases}
$$
Clearly $\phi_b$ is $1$-periodic and even; thus $\phi_b(\alpha) = \phi_b(\{\alpha\}) = \phi_b(\|\alpha\|)$ for any real $\alpha$.

The following fact, but with $\|\alpha\|$ in the range $\|\alpha\|^2 \leq 6/\pi^2(b^2-1)$, appears in \cite{MR} (see their Lemma 3).

\begin{lem}\label{lem: exponential bound}
	If $\|\alpha\| \leq 1/b$, then
	$$
	\phi_b(\alpha) \leq b \exp\left(-\dfrac{\pi^2 }{6}(b^2-1)\|\alpha\|^2\right).
	$$	
\end{lem}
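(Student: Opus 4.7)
The plan is to reduce to the case $\alpha \in [0, 1/b]$ using the $1$-periodicity and even symmetry of $\phi_b$, which turn the hypothesis $\|\alpha\| \leq 1/b$ into a plain interval constraint $\alpha' := \|\alpha\| \in [0, 1/b]$. The two endpoint cases can be dispatched directly: at $\alpha' = 0$ one has $\phi_b(0) = b$ with equality on the right-hand side, and at $\alpha' = 1/b$ one has $\sin(\pi b \alpha') = \sin(\pm\pi) = 0$, so $\phi_b(\alpha') = 0 \leq b\exp(\cdots)$ trivially.

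For $\alpha' \in (0, 1/b)$, I would invoke Euler's sine product, exactly as in the proof of Lemma~\ref{lem:comp2}, to write
$$
\phi_b(\alpha') \;=\; \left|\frac{\sin(\pi\alpha' b)}{\sin(\pi\alpha')}\right| \;=\; b\prod_{m=1}^{\infty}\frac{1 - \alpha'^{\,2}b^2/m^2}{1 - \alpha'^{\,2}/m^2}.
$$
Under the constraint $\alpha' < 1/b$, every factor is strictly positive since $\alpha'^{\,2}b^2 < 1 \leq m^2$, so the product is positive and its logarithm is well-defined. Taking logarithms and expanding each $\log(1-x)$ as a power series gives exactly the expansion derived in \eqref{eqn: log expansion in terms of zeta}, namely
$$
\log\!\left(\frac{\phi_b(\alpha')}{b}\right) \;=\; -\sum_{k \geq 1}\frac{\zeta(2k)}{k}(b^{2k}-1)\,\alpha'^{\,2k}.
$$

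Now the key observation: every term of this sum is \emph{non-negative} (because $\zeta(2k) > 0$, $b^{2k} - 1 > 0$, and $\alpha'^{\,2k} \geq 0$), and in particular the sum is at least its $k=1$ term. Hence
$$
\log\!\left(\frac{\phi_b(\alpha')}{b}\right) \;\leq\; -\zeta(2)(b^2-1)\alpha'^{\,2} \;=\; -\frac{\pi^2}{6}(b^2-1)\|\alpha\|^2,
$$
and exponentiating yields the claim.

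There is no substantive obstacle. The only subtlety is ensuring that the Euler product is positive term-by-term (so that its logarithm exists and can be expanded), and this is precisely what the hypothesis $\|\alpha\| \leq 1/b$ buys; the boundary point $\|\alpha\| = 1/b$ has to be peeled off separately because the logarithm blows up there, but in that case the bound is trivial. In effect, the lemma is a sharper \emph{pointwise} reading of the power-series calculation already carried out inside Lemma~\ref{lem:comp2}, where higher-order terms were merely absorbed into an error; here they are discarded in the correct direction to yield a clean inequality.
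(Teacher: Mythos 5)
Your proof is correct and follows essentially the same route as the paper's own (terse) argument: dispatch the endpoints $\|\alpha\| \in \{0, 1/b\}$ directly, and for $0 < \|\alpha\| < 1/b$ use the Euler product and the expansion \eqref{eqn: log expansion in terms of zeta}, dropping the non-positive terms with $k \geq 2$ to retain only $-\zeta(2)(b^2-1)\|\alpha\|^2$. Your write-up simply makes explicit the positivity of the product factors and the sign argument that the paper leaves implicit.
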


\begin{proof}
	We have $\phi_b(0) = b$ and $\phi_b(1/b)=0$, whence the result holds for $\|\alpha\| \in \{0, 1/b \}$. For $0 < \|\alpha\| < 1/b$, the claim follows from (\ref{eqn: log expansion in terms of zeta}). 
\end{proof}

\begin{lem}\label{lem: bound for phi}
	If $\alpha$ and $0\leq \delta \leq 2/3b$ are real numbers with $\|\alpha\| \geq \delta$, then $\phi_b(\alpha) \leq \phi_b(\delta)$.
\end{lem}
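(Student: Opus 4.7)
The plan is to reduce the statement to a one-variable monotonicity problem on $[0,1/2]$ and then split into two regimes according to the location of $\alpha$ relative to $1/b$, the first zero of $\phi_b$.

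First I would use that $\phi_b$ is $1$-periodic and even to replace $\alpha$ by $\|\alpha\|$; since $0\leq \delta \leq 2/(3b) \leq 1/2$ we already have $\phi_b(\delta)=\phi_b(\|\delta\|)$. Hence the claim reduces to: for $0 \leq \delta \leq 2/(3b)$ and $\delta \leq \alpha \leq 1/2$, we have $\phi_b(\alpha) \leq \phi_b(\delta)$.

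\emph{First regime: $\delta \leq \alpha \leq 1/b$.} I would prove that $\phi_b$ is strictly decreasing on $[0,1/b]$, so the conclusion follows at once. To show this, compute the logarithmic derivative of $\sin(\pi b \alpha)/\sin(\pi \alpha)$ (which is positive on $(0,1/b)$) and observe that the sign of $\phi_b'$ on $(0,1/(2b))$ is governed by the sign of $\tan(\pi b\alpha) - b\tan(\pi\alpha)$. The inequality $\tan(\pi b\alpha) \geq b\tan(\pi\alpha)$ follows from the convexity of $\tan$ on $[0,\pi/2)$ together with $\tan(0)=0$: the slope from the origin is non-decreasing, giving $\tan(\pi b\alpha)/(\pi b \alpha) \geq \tan(\pi\alpha)/(\pi\alpha)$. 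On the complementary sub-interval $(1/(2b),1/b)$ the required inequality is immediate from signs, since there $\cos(\pi b \alpha)<0<\cos(\pi\alpha)$ while $\sin(\pi b \alpha),\sin(\pi\alpha)>0$.

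\emph{Second regime: $1/b \leq \alpha \leq 1/2$.} Use the crude estimate $\phi_b(\alpha) \leq 1/\sin(\pi\alpha)\leq 1/\sin(\pi/b)$, valid because $\sin(\pi\alpha)$ attains its minimum on $[1/b,1/2]$ at $\alpha=1/b$. By the monotonicity from the first regime and $\delta \leq 2/(3b)$, we have $\phi_b(\delta) \geq \phi_b(2/(3b)) = (\sqrt{3}/2)/\sin(2\pi/(3b))$. So it suffices to show the trigonometric inequality
\[
\tfrac{\sqrt{3}}{2}\sin(\pi/b) \geq \sin(2\pi/(3b)) \qquad (b \geq 2).
\]
Setting $u = \pi/(3b) \in (0,\pi/6]$ and using the identities $\sin(3u)=3\sin u - 4\sin^3 u$ and $\sin(2u)=2\sin u\cos u$, this reduces (after dividing by $\sin u >0$) to $2\sqrt{3} c^2 - 2c - \sqrt{3}/2 \geq 0$, where $c = \cos u \in [\sqrt{3}/2,1)$. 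Evaluating at $c=\sqrt{3}/2$ gives $0$, and the derivative $4\sqrt{3}c - 2$ is already positive at that point, so the quadratic is non-negative on the required range.

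The main obstacle is really this last trigonometric inequality; its sharpness at $b=2$ (where equality holds throughout the second regime) explains why the constant $2/(3b)$ in the hypothesis is tight and rules out a purely qualitative softer argument. The monotonicity step is standard, and everything else is bookkeeping.
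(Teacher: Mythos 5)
Your proof is correct, and it is genuinely self-contained where the paper is not: the paper disposes of this lemma by citing Mauduit--Rivat \cite{MR}, Lemma 5, so your argument supplies a proof the paper never writes out. The two-regime structure is sound: the reduction to $\|\alpha\|\in[\delta,1/2]$ via periodicity and evenness is fine; on $[0,1/b]$ your monotonicity argument works (on $(0,1/(2b))$ the sign of $b\cot(\pi b t)-\cot(\pi t)$ is indeed decided by $\tan(\pi b t)-b\tan(\pi t)$, and convexity of $\tan$ with $\tan 0=0$ gives $\tan(\pi b t)\ge b\tan(\pi t)$; on $(1/(2b),1/b)$ the sign argument is immediate); and on $[1/b,1/2]$ the chain $\phi_b(t)\le 1/\sin(\pi/b)$, $\phi_b(\delta)\ge\phi_b(2/(3b))=(\sqrt3/2)/\sin(2\pi/(3b))$ reduces everything to $(\sqrt3/2)\sin(3u)\ge\sin(2u)$ for $u=\pi/(3b)\in(0,\pi/6]$, and your quadratic $2\sqrt3c^2-2c-\sqrt3/2\ge0$ on $c\in[\sqrt3/2,1)$ checks out (it vanishes at $c=\sqrt3/2$ and is increasing there since the vertex lies at $c=1/(2\sqrt3)$). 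One small correction to your closing remark: the equality at $b=2$ is sharpness of \emph{your} intermediate trigonometric inequality, not of the lemma's hypothesis --- for $b=2$ one has $\phi_2(t)=2\cos(\pi t)$ on $[0,1/2]$, which is decreasing, so the conclusion actually holds for every $\delta\le 1/2$; the loss comes from the crude bound $|\sin(\pi b t)|\le1$, and the true threshold for general $b$ is somewhat larger than $2/(3b)$. This does not affect the validity of the proof.
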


\begin{proof}
	See Lemma 5 in \cite{MR}.
\end{proof}	

%%%%

\begin{lem}\label{lem: bound for product of pair}
	Let $\alpha,\beta, \gamma$ be real numbers. Then either 
	$$
	\left\|\alpha\left(b^{\beta} + b^{\gamma + 1}\right) \right\| \geq \dfrac{\|\alpha(b^2-1)b^{\gamma}\|}{b+1} 
	$$
	or
	$$
	\left\| \alpha \left(b^{\beta + 1} + b^{\gamma}\right)\right\|\geq \dfrac{\|\alpha(b^2-1)b^{\gamma}\|}{b+1} 
	$$
	(or both).
	Moreover
	$$
	\phi_b\left(\alpha\left(b^{\beta} + b^{\gamma + 1}\right)\right)\phi_b\left(\alpha \left(b^{\beta + 1} + b^{\gamma}\right)\right) \leq b \phi_b \left(\dfrac{\|\alpha(b^2-1)b^\gamma\|}{b+1}\right).
	$$	
\end{lem}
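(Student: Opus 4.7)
The plan is to exploit a single algebraic identity that links the three quantities $\alpha(b^\beta+b^{\gamma+1})$, $\alpha(b^{\beta+1}+b^\gamma)$ and $\alpha b^\gamma(b^2-1)$. Specifically, a direct computation yields
\[
b\bigl(b^\beta+b^{\gamma+1}\bigr)-\bigl(b^{\beta+1}+b^\gamma\bigr)=b^{\beta+1}+b^{\gamma+2}-b^{\beta+1}-b^\gamma=b^\gamma(b^2-1),
\]
so after multiplying by $\alpha$ and applying the two elementary properties of the distance-to-nearest-integer norm, namely $\|x+y\|\leq\|x\|+\|y\|$ and $\|kx\|\leq k\|x\|$ for positive integers $k$, one obtains
\[
\|\alpha(b^2-1)b^\gamma\|\leq b\,\|\alpha(b^\beta+b^{\gamma+1})\|+\|\alpha(b^{\beta+1}+b^\gamma)\|.
\]

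The first assertion then follows immediately: the right-hand side is at most $(b+1)$ times the maximum of the two norms, so that maximum is at least $\|\alpha(b^2-1)b^\gamma\|/(b+1)$. This gives the ``either/or'' dichotomy stated in the lemma.

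For the product bound, set $\delta:=\|\alpha(b^2-1)b^\gamma\|/(b+1)$ and observe that $\delta\leq 1/(2(b+1))\leq 2/(3b)$, so Lemma~\ref{lem: bound for phi} is applicable with this value of $\delta$. Assume (without loss of generality, by symmetry of the argument) that it is the first of the two norms that satisfies $\|\alpha(b^\beta+b^{\gamma+1})\|\geq\delta$. Then Lemma~\ref{lem: bound for phi} gives $\phi_b(\alpha(b^\beta+b^{\gamma+1}))\leq\phi_b(\delta)$, while the trivial bound $\phi_b\leq b$ handles the other factor. Multiplying yields the desired inequality.

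I do not expect any real obstacle here; the only place where one needs to be mildly careful is the verification that $\delta$ lies in the allowed range $[0,2/3b]$ for Lemma~\ref{lem: bound for phi}, and the bookkeeping for the symmetric case. Everything else is a direct consequence of the algebraic identity above together with the two elementary facts about $\|\cdot\|$.
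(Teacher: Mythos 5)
Your proof is correct and follows essentially the same route as the paper: both rest on the identity $b\bigl(b^{\beta}+b^{\gamma+1}\bigr)-\bigl(b^{\beta+1}+b^{\gamma}\bigr)=b^{\gamma}(b^2-1)$, the triangle inequality for $\|\cdot\|$ (you state it directly, the paper argues the contrapositive), and then Lemma \ref{lem: bound for phi} together with the trivial bound $\phi_b\leq b$ for the remaining factor. Your explicit check that $\delta\leq 1/(2(b+1))\leq 2/(3b)$ is a welcome detail the paper leaves implicit.
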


\begin{proof}
	We closely follow the argument in Lemma 6 of Mauduit-Rivat \cite{MR}.
	Making the substitutions $u = \alpha(b^\beta + b^{\gamma+1})$ and $\delta = \|\alpha(b^2-1)b^\gamma\|/(b+1)$, we need to show that either $\|u\| \geq \delta$ or $\|bu - \alpha(b^2-1)b^{\gamma}\| \geq \delta$, and that 
	\begin{equation}\label{eqn: product ineq}
		\phi_b(u)\phi_b(bu - \alpha (b^2-1)b^\gamma) \leq b\phi_b(\delta).
	\end{equation}
	Suppose $\|bu - \alpha(b^2-1)b^\gamma\| < \delta$. The triangle inequality yields
	\begin{align*}
		b\|u\| \geq \|b u\| &= \left\| bu - \alpha(b^2-1)b^\gamma + \alpha(b^2-1)b^\gamma\right\| \\
		&\geq  \left\| \alpha(b^2-1)b^\gamma\right\| - \left\| bu - \alpha(b^2-1)b^\gamma\right\| \\
		&\geq (b+1)\delta - \delta = b\delta.
	\end{align*}
	Thus $\|u\| \geq \delta$. Consequently either $\|u\| \geq \delta$ or $\|bu -\alpha(b^2-1)b^\gamma\|\geq \delta$, while (\ref{eqn: product ineq}) now follows from Lemma \ref{lem: bound for phi}.
\end{proof}	

The following lemma may be useful when considering exponential sums over multisets of the form
$$
\mathcal{S} = \left\{ \left\{\sum_{n=1}^N c_n \alpha_n \ : \ 0 \leq c_1, \ldots, c_N < b \right\}\right\}
$$
for real numbers $\alpha_1, \ldots, \alpha_N$ with a sufficiently small star-discrepancy. This is due to the equality
$$
\left|\sum_{\gamma \in \mathcal{S}}e(\gamma) \right| = \prod_{n=1}^N \phi_b\left(\alpha_n\right). 
$$

\begin{lem}[Weyl product bound]\label{lem: bound for product using KH and ET}
	There exists an absolute constant $A > 0$ such that, for any real numbers $\alpha_1, \ldots, \alpha_N$ with $N \geq 1$ an integer, 
	$$
	\prod_{n =1}^N \phi_b(\alpha_n) \leq \left(\dfrac{2}{D_N^*} \right)^{Ab N D_N^* },
	$$
	where $D_N^* = D_N^*(\alpha_1, \ldots, \alpha_N)$ is their star-discrepancy as defined in (\ref{eqn: star discrepancy}).
\end{lem}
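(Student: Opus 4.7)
I would apply the Koksma--Hlawka inequality (Lemma \ref{lem: Koksma-Hlawka}) to a truncated version of $\log\phi_b$. First observe that one may assume $D_N^* \leq 1/b$: for $D_N^* > 1/b$ one has $\log b \leq \log(2b) \leq A b D_N^* \log(2/D_N^*)$ for any $A\geq 1$, so the trivial bound $\prod_n \phi_b(\alpha_n) \leq b^N$ already implies the claim. Fix a parameter $M>0$ to be chosen and define the bounded, piecewise-smooth function
$$
f_M(\alpha) := \max\!\bigl(\log\phi_b(\alpha),\, -M\bigr).
$$
Since $f_M \geq \log\phi_b$ pointwise,
$$
\log \prod_{n=1}^N \phi_b(\alpha_n) \;\leq\; \sum_{n=1}^N f_M(\{\alpha_n\}) \;\leq\; N\!\int_0^1 f_M + N\, V(f_M)\, D_N^*,
$$
the second inequality being Koksma--Hlawka. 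The task then reduces to estimating $\int_0^1 f_M$ and the total variation $V(f_M)$.

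For the integral, I would use $\int_0^1 \log\phi_b = 0$ (which follows from the factorization $\phi_b(\alpha)=|1-e(\alpha b)|/|1-e(\alpha)|$ together with the classical identity $\int_0^1 \log|1-e(\alpha)|\,d\alpha = 0$). Near each zero $\alpha=k/b$ of $\phi_b$ (with $k=1,\dots,b-1$), the linear approximation $\phi_b(\alpha) \sim \pi b|\alpha-k/b|/|\sin(\pi k/b)|$ shows that $\{\log\phi_b < -M\}$ consists of $b-1$ small intervals of half-length $T_k \asymp e^{-M}|\sin(\pi k/b)|/b$. A direct integration of $-M-\log\phi_b$ over each such interval yields $\asymp T_k$, and after summing and using $\sum_{k=1}^{b-1}|\sin(\pi k/b)| \asymp b$, one gets $\int_0^1 f_M = \int_0^1 (f_M - \log\phi_b) = O(e^{-M})$ with an \emph{absolute} implied constant. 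For the variation, note that $\phi_b$ has only $O(b)$ monotone pieces on $[0,1]$ (bounded between consecutive zeros $k/b$), and on each such piece $f_M$ takes values in $[-M,\log b]$; hence $V(f_M) \leq 2b(M+\log b)$.

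Choosing $M := \log(2/D_N^*)$, which satisfies $M \geq \log(2b) > \log b$ in the regime $D_N^* \leq 1/b$, one gets $V(f_M) \leq 4bM$ and $\int_0^1 f_M \leq c\, D_N^*$. Plugging into Koksma--Hlawka,
$$
\log\prod_{n=1}^N \phi_b(\alpha_n) \;\leq\; c\,N D_N^* \,+\, 4 b N D_N^* \log(2/D_N^*),
$$
and since $b\geq 2$ and $\log(2/D_N^*) \geq \log(2b) > 1$, the second term absorbs the first. Exponentiating yields $\prod \phi_b(\alpha_n) \leq (2/D_N^*)^{A b N D_N^*}$ with some absolute $A>0$. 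The main obstacle I expect is the estimate $\int_0^1 f_M = O(e^{-M})$ uniformly in $b$: each of the $b-1$ logarithmic singularities of $\log\phi_b$ might naively contribute, and the needed cancellation hinges on the sine weights in the widths $T_k$ together with $\sum_k|\sin(\pi k/b)|\asymp b$. Ensuring that neither this estimate nor the variation bound introduces a spurious factor of $\log b$ beyond $M$ is precisely what makes the initial reduction to $D_N^* \leq 1/b$ convenient.
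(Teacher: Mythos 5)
Your proposal is correct and follows essentially the same route as the paper: regularize $\log \phi_b$ near the zeros of $\phi_b$, apply Koksma--Hlawka (Lemma \ref{lem: Koksma-Hlawka}), bound the integral and the total variation of the regularized function, and optimize the cutoff against the discrepancy (your $M=\log(2/D_N^*)$ plays the role of the paper's $\epsilon = D_N^*/2b$, where $\phi_b$ is instead modified on $\epsilon$-neighbourhoods of its zeros following Aistleitner et al.). The only differences are technical --- truncating $\log\phi_b$ in value rather than altering $\phi_b$ near its zeros, and using $\int_0^1\log\phi_b=0$ plus a monotone-piece count instead of the paper's cotangent-integral computations --- and the obstacle you flag, $\int_0^1 f_M = O(e^{-M})$ uniformly in $b$, does go through: near each zero $k/b$ one has $\phi_b(\alpha)\geq 2b\,|\alpha-k/b|$ (since $|\sin \pi\alpha|\leq 1$), so each of the $b-1$ sublevel intervals contributes at most $e^{-M}/b$.
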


\begin{proof}
	We adapt an argument of Aistleitner et al  \cite{Aistleitner} (see the proof of Theorem 1 there).  
	Let $0 < \epsilon \leq 1/2b$ to be chosen later. For a real number $x$, we define 
	$$
	\phi_{b,\epsilon}(x) := \begin{cases}
	\sin(\pi b \epsilon)/|\sin(\pi x)| &\mbox{ if } |x - \frac{k}{b}| < \epsilon \text{ for some integer $k \not\equiv 0 (b)$}\\
	\phi_b(x) &\mbox{ otherwise.}
	\end{cases}
	$$
	Clearly $\phi_{b,\epsilon}$ is $1$-periodic and continuous satisfying $\phi_{b,\epsilon}(x) \asymp_{b,\epsilon} 1$ and $\phi_{b,\epsilon}(x) \geq \phi_{b}(x)$ for every $x$. In particular
	$$
	\prod_{1 \leq n \leq N} \phi_b(\alpha_n) \leq 
	\prod_{1 \leq n \leq N} \phi_{b,\epsilon}(\alpha_n).
	$$
	Denoting by $P_N$ the product on the right hand side above, taking logarithms and applying the Koksma-Hlawka inequality (Lemma \ref{lem: Koksma-Hlawka}) one has
	\begin{equation}\label{eqn 1}
	\dfrac{\log P_N}{N} = \dfrac{1}{N}\sum_{1 \leq n \leq N} \log \phi_{b,\epsilon}(\alpha_n) \leq \int_0^1 \log\phi_{b,\epsilon}(x)dx +  V(\log \circ \phi_{b,\epsilon}) D^*_N,
\end{equation}
	where $V(\log \circ \phi_{b,\epsilon})$ is the total variation of $\log \circ \phi_{b,\epsilon} $ on $[0,1]$. 
	
	We first bound $V(\log \circ \phi_{b,\epsilon})$. By the definition of $\phi_{b,\epsilon}$, if 
	$$
	x \in \mathcal{S} := [0,1]- \bigcup_{0 < k < b}\left(\frac{k}{b} - \epsilon, \frac{k}{b} + \epsilon\right) = [0,\epsilon]\cup[1-\epsilon,1] \cup \bigcup_{0 \leq k < b}\left[\dfrac{k}{b} + \epsilon, \dfrac{k+1}{b} - \epsilon\right],
	$$ 
	then $\phi_{b,\epsilon}(x) = \phi_b(x) = |\sin(\pi b x)|/\sin(\pi x)$, which we interpret as $b$ if $x \in \{0,1\}$. Over each of the intervals on the right hand side above, $\phi_b$ is continuously differentiable and non-vanishing. Since $\phi_b(x) = \phi_b(\|x\|)$ and $\frac{d}{dt} \log \sin(t) = \cot(t)$, it follows
	\begin{align*}
	&\int_{\mathcal{S}} \left| \dfrac{d}{dx}\log \phi_{b,\epsilon}(x)\right|dx\\
	& \leq 2 \pi \int_0^\epsilon \left|b\cot(\pi b x) - \cot(\pi x)\right|dx + \pi  \sum_{0 \leq k < b}\int_{\frac{k}{b} + \epsilon}^{\frac{k+1}{b}-\epsilon}\left(b |\cot(\pi b x)| + |\cot(\pi x)|\right)dx.
\end{align*}  
Since $\cot(x) = x^{-1} + O(x)$ for $0 < |x| \leq \pi/2$ and $\epsilon \leq 1/2b$ by assumption, the integral over $[0,\epsilon]$ is $\ll 1$. Substituting variables and using $|\cot(\pi t)| = \cot(\pi \|t\|)$, we also have
$$
b\int_{\frac{k}{b} + \epsilon}^{\frac{k+1}{b} - \epsilon}|\cot(\pi b x)|dx = \int_{k + b\epsilon}^{k+1 - b\epsilon} |\cot(\pi x)|dx = 2\int_{b\epsilon}^{1/2}\cot(\pi x)dx \ll \log(1/b\epsilon). 
$$
Thus
$$
\int_{\mathcal{S}} \left| \dfrac{d}{dx}\log \phi_{b,\epsilon}(x)\right|dx \ll b\log(1/b\epsilon) + \pi \sum_{0 \leq k < b}\int_{\frac{k}{b} + \epsilon}^{\frac{k+1}{b} - \epsilon}|\cot(\pi x)|dx.
$$ 
For $x \in [0,1] - \mathcal{S}$, we have $\phi_{b,\epsilon}(x) = \sin(\pi b \epsilon)/\sin(\pi x )$; hence $|\frac{d}{dx} \log \phi_{b,\epsilon}(x)| = \pi |\cot(\pi x)|$. Thus
$$
\int_{[0,1] - \mathcal{S}} \left| \dfrac{d}{dx}\log \phi_{b,\epsilon}(x)\right|dx = \pi \sum_{0 < k < b}\int_{\frac{k}{b} - \epsilon}^{\frac{k}{b} + \epsilon}|\cot(\pi x)|dx
$$
and so
\begin{align}
&V\left(\log \circ \phi_{b,\epsilon}\right)\nonumber\\ 
&\ll b\log(1/b\epsilon) + \pi \sum_{0 \leq k < b}\int_{\frac{k}{b} + \epsilon}^{\frac{k+1}{b} - \epsilon}|\cot(\pi x)|dx + \pi \sum_{0 < k < b}\int_{\frac{k}{b} - \epsilon}^{\frac{k}{b} + \epsilon}|\cot(\pi x)|dx\nonumber\\
&=b\log(1/b\epsilon) + \pi \int_\epsilon^{1-\epsilon} |\cot(\pi x)|dx = b\log(1/b\epsilon) + 2\pi \int_\epsilon^{1/2} \cot(\pi x)dx \nonumber\\
&\ll b\log(1/b\epsilon). \label{eqn 2}
\end{align}

	With regards to the integral $\int_0^1\log \phi_{b,\epsilon}(x)dx$, splitting it similarly according to the definition of $\phi_{b,\epsilon}$, using $\phi_b(x) \leq b$, the additivity of the logarithm, and ignoring the terms $\log \sin(\pi b \epsilon)$ (these are non-positive) one derives 
	\begin{align}
	\int_0^1 \log \phi_{b,\epsilon}(x)dx &\leq 2\epsilon \log b +  2\int_{\epsilon}^{1/2} \log \csc(\pi x)dx + \sum_{0 \leq k < b} \int_{\frac{k}{b} + \epsilon}^{\frac{k+1}{b} - \epsilon} \log |\sin(\pi b x)|dx \nonumber\\
	&=2\epsilon \log b +  2\int_{\epsilon}^{b\epsilon} \log \csc(\pi x)dx \nonumber\\
	&\ll b\epsilon \log(1/b \epsilon).\label{eqn 3} 
	\end{align}
	The equality above follows after substituting $bx$ with $u$ and using
	$$
	\int_{k+b\epsilon}^{k+1 - b\epsilon}\log |\sin(\pi u)|du  = \int_{b\epsilon}^{1-b\epsilon} \log |\sin (\pi u)|du = -2 \int_{b\epsilon}^{1/2} \log \csc(\pi u)du.
	$$
	Now the result follows from (\ref{eqn 1}), (\ref{eqn 2}), (\ref{eqn 3}) if we let $\epsilon = D_N^*/2b $ using the fact $0 < 1/2N \leq D_N^* \leq 1$ (see for instance Theorems 1.2 and 1.3 in Section 2 of \cite{Kuipers} for this).
\end{proof}

Our final tool is the following version of Vinogradov's lemma. 

\begin{lem}[Vinogradov-type lemma]\label{lem: vinogradov}
	Let $A,B, \theta$ be real numbers, $A,B > 0$, and let $q\geq 2$ be an integer. Then
	$$
	\sum_{n(q)} \min\left(A, B\csc^2\left(\pi \dfrac{n + \theta}{q}\right)\right) \leq \min\left(A, B\csc^2\left(\dfrac{\pi \|\theta\|}{q}\right)\right) + \left(1 - \dfrac{4}{\pi^2}\right)Bq^2.
	$$
\end{lem}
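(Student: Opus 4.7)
The plan is to single out from the sum the one residue class on which $\csc^2$ is largest, and to estimate the remaining $q-1$ terms via a closed-form identity. By invariance of the left-hand side under $\theta \mapsto \theta+1$ (reindexing $n$) and under $\theta \mapsto -\theta$ (reindexing $n \mapsto -n$, using that $\csc^2$ is even), one may assume $\theta \in [0,1/2]$ and hence $\|\theta\|=\theta$. The edge case $\theta=0$ follows at once from the classical identity $\sum_{n=1}^{q-1}\csc^2(\pi n/q) = (q^2-1)/3$ together with $(q^2-1)/3 \leq (1 - 4/\pi^2)q^2$, so assume $\theta \in (0,1/2]$. The residue $n \equiv 0 \pmod{q}$ then minimizes $\|(n+\theta)/q\|$, with minimum value $\theta/q = \|\theta\|/q$; I would bound its contribution by $\min(A, B\csc^2(\pi\|\theta\|/q))$, which already matches the first term on the right-hand side of the lemma.

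The problem thus reduces to showing
\[
\sum_{n \not\equiv 0\,(q)} B\,\csc^2\!\Bigl(\tfrac{\pi(n+\theta)}{q}\Bigr) \;\leq\; \Bigl(1-\tfrac{4}{\pi^2}\Bigr)Bq^2.
\]
For this the key ingredient is the identity
\[
\sum_{n=0}^{q-1}\csc^2\!\Bigl(\tfrac{\pi(n+\theta)}{q}\Bigr) \;=\; q^2\,\csc^2(\pi\theta),
\]
which is immediate from the Mittag--Leffler expansion $\pi^2\csc^2(\pi z) = \sum_{k \in \mathbb{Z}}(z-k)^{-2}$: apply it at $z=(n+\theta)/q$, sum over $n=0,\ldots,q-1$, and re-index the resulting double sum as $\sum_{m\in\mathbb{Z}}(m+\theta)^{-2} = \pi^2\csc^2(\pi\theta)$. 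Combined with the elementary bound $\csc^2(\pi\theta/q)\geq q^2/(\pi\theta)^2$ (from $|\sin x|\leq|x|$), the displayed inequality reduces to
\[
g(y) := \csc^2(y) - \tfrac{1}{y^2} \;\leq\; 1 - \tfrac{4}{\pi^2} \qquad \text{for } y=\pi\theta \in (0,\pi/2].
\]

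Since $g(\pi/2) = 1 - 4/\pi^2$, it suffices to prove that $g$ is nondecreasing on $(0,\pi/2]$. I expect this to be the most delicate step, as it is precisely what makes the constant $1-4/\pi^2$ tight, with equality at $\theta=1/2$. The cleanest verification reuses the same Mittag--Leffler expansion: writing $g(y) = \pi^{-2}\sum_{k\neq 0}(y/\pi-k)^{-2}$ and pairing the $\pm k$ terms gives, for each $k\geq 1$, a contribution whose $y$-derivative equals $-(2/\pi)\bigl[(y/\pi-k)^{-3}+(y/\pi+k)^{-3}\bigr]$; on $(0,\pi/2]$ the bracket is negative, because $0 < k - y/\pi < k + y/\pi$ forces $(k-y/\pi)^{-3} > (k+y/\pi)^{-3}$. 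Summing the nonnegative pair contributions yields $g'(y)\geq 0$ and completes the proof.
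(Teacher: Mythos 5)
Your proof is correct, and its skeleton coincides with the paper's: after reducing to $\theta=\|\theta\|\in[0,1/2]$, both arguments isolate the residue $n\equiv 0\ (q)$ (whose term is exactly $\min(A,B\csc^2(\pi\|\theta\|/q))$) and evaluate the remaining sum in closed form via the expansion $\pi^2\csc^2(\pi x)=\sum_{k\in\mathbb{Z}}(x+k)^{-2}$, which gives $\sum_{1\le n<q}\csc^2\bigl(\pi(n+\theta)/q\bigr)=q^2\csc^2(\pi\theta)-\csc^2(\pi\theta/q)$ for $0<\theta\le 1/2$ and $(q^2-1)/3$ at $\theta=0$. The only divergence is in how the resulting elementary inequality is finished: the paper observes that $H_q(r):=q^2\csc^2(\pi r)-\csc^2(\pi r/q)$ is convex in $r$ on $[0,1/2]$, so it is maximized at an endpoint, checks $H_q(0)\le H_q(1/2)$, and bounds $H_q(1/2)=q^2-\csc^2(\pi/2q)\le(1-4/\pi^2)q^2$ using $\sin x\le x$; you instead discard $\csc^2(\pi\theta/q)$ down to $q^2/(\pi\theta)^2$ via the same $\sin x\le x$ and prove that $g(y)=\csc^2(y)-y^{-2}$ is nondecreasing on $(0,\pi/2]$ by termwise differentiation of the Mittag--Leffler series (legitimate, since the differentiated series converges uniformly there), with $g(\pi/2)=1-4/\pi^2$. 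Both finishes are valid and of comparable length; yours trades the two-endpoint convexity comparison for a one-variable monotonicity statement and makes the tightness at $\theta=1/2$ transparent, while the paper's avoids differentiating the series by quoting convexity of $\csc^2(\pi x)$ directly.
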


\begin{proof}
	We can write $\theta = m + r$ for some integer $m$ and real number $-1/2 < r \leq 1/2$. In fact $|r| = \|\theta\|$. Since $m + n$ uniquely covers each residue class modulo $q$ when so does $n$, the sum equals
	$$
	S := \sum_{0 \leq n < q} \min\left(A, B\csc^2\left(\pi \dfrac{n + r}{q}\right)\right).
	$$
	If $r < 0$, we may replace $n$ with $-n$ in the summand. Thus without loss of generality we may assume that $r \geq 0$. Now note
	$$
	S \leq \min\left(A, B\csc^2\left( \dfrac{\pi r}{q}\right)\right) + B H_q(r), 
	$$
	where for $0 \leq r \leq 1/2$, 
	$$
	H_q(r) :=  \sum_{1 \leq n < q} \csc^2\left(\pi \dfrac{n + r}{q} \right). 
	$$
	Since $x \mapsto \csc^2(\pi x)$ is convex in $0 < x < 1$, so is $H_q(r)$ in $0 \leq r \leq 1/2$. From the well-known identity
	$$
	\pi^2\csc^2(\pi x) = \sum_{n \in \mathbb{Z}} \dfrac{1}{(n + x)^2}
	$$
	for $x \not\in \mathbb{Z}$,
	one derives (see for instance \cite{Hofbauer})
	$$
	H_q(r) = \begin{cases}
	(q^2-1)/3 &\mbox{ if } r=0, \\
	q^2 \csc^2(\pi r) - \csc^2(\pi r/q) &\mbox{ if } 0 < r \leq 1/2.
	\end{cases}
	$$
	One can show that $H_q(0)\leq H_q(1/2)$ (say by using the inequality $\sin(\pi x) \geq 2^{3/2} x$ valid for $0 \leq x \leq 1/4$) and thus $H_q(r) \leq H_q(1/2)$ for $0 \leq r \leq 1/2$ by the convexity of $H_q(r)$ in $[0,1/2]$. We also have $H_q(1/2) = q^2 - \csc^2(\pi/2q) \leq q^2(1 - 4/\pi^2)$ since $\sin(x) \leq x$ for $x \geq 0$. The result now follows.
\end{proof}

\section{Exponential sums over $\mathscr{P}_b^0(x)$}\label{sec: exp sums}

The current state of knowledge on exponential sums over palindromes, specifically on bounds for these, is rather limited. To our understanding, the literature on this topic is thus far comprised of the works of Banks-Hart-Sakata \cite{Banks-Hart-Sakata}, Banks-Shparlinski \cite{Banks-Shparlinski} and Col \cite{Col}.
Here we give a brief exposition of what is known, as well as add slightly to the literature via the results in Proposition \ref{prop: product bound for modulus p} and Corollary \ref{cor: bound using product of Bourgain sequence} for special prime moduli. These are consequences of a result of Bourgain \cite{Bourg} giving upper bounds for certain types of exponential sums over finite fields with prime order. However these are not employed by us in our proofs of Theorems \ref{thm: at most 6 primes} and \ref{thm: equidistribution}. As such, Proposition \ref{prop: product bound for modulus p} and Corollary \ref{cor: bound using product of Bourgain sequence} may be regarded as addenda to the work, stated here in the interests of expanding the subject.

For the sake of simplicity, and indeed, for our purposes, we focus exclusively on exponential sums over $\mathscr{P}_b^0(x)$; that is, over $b$-palindromes, at most $x$ in size, with an odd number of digits in their $b$-adic expansion. Sums over palindromes with an even number of digits are rather similar in nature and all results here extend naturally, albeit with minor differences, to these.

The starting point, just as that in the works \cite{Banks-Hart-Sakata}, \cite{Col}, is that such sums can essentially be decomposed into linear combinations of objects enjoying rather useful multiplicative and algebraic structures.
These properties are crucial in the upcoming sections. In this regard we have the following lemma, implicit in the works \cite{Banks-Hart-Sakata}, \cite{Col}. We give a proof for the convenience of the reader.

\begin{lem}\label{lem: bound for exp sum of pals}
	For any $\alpha,x \in \mathbb{R}$, $x \geq 1$,
	$$
	\left| \sum_{n \in \mathscr{P}_b^0(x)} e(\alpha n)\right| \leq b^2 \sum_{0 \leq N \leq \frac{1}{2}\log_b x} \sum_{0 \leq M \leq N} \Phi_{M}\left(\alpha b^{N-M}\right),
	$$
	where $\Phi_M$ is as defined in (\ref{eqn: def of Phi}).
\end{lem}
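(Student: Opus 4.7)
The plan is to decompose twice. First, split $\mathscr{P}_b^0(x) = \bigsqcup_{N \leq \log_b(x)/2} (\Pi_b(2N) \cap [1,x])$ by the number of digits, reducing the lemma to the per-block estimate
\[ \Bigl|\sum_{n \in \Pi_b(2N) \cap [1,x]} e(\alpha n)\Bigr| \leq b^2 \sum_{M=0}^{N} \Phi_M(\alpha b^{N-M}). \]
Second, within each block, use the base-$b$ digits of $y := \min(\lfloor x\rfloor, b^{2N+1}-1)$ to classify palindromes by the highest position at which they strictly undercut $y$. The key algebraic point is that any palindrome $n \in \Pi_b(2N)$ satisfies $n_j = n_{2N-j}$ and decomposes as $n = n_N b^N + \sum_{j=0}^{N-1} n_j(b^j + b^{2N-j})$, so it is determined by its top $N+1$ digits.

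For each $M \in \{1, \ldots, N\}$, the palindromes with first undercut at position $k := N + M$ satisfy: $n_j = y_j$ for $j > k$ (fixed, with their palindromic reflections at the bottom also fixed); $n_k \in \{0,\ldots,y_k-1\}$ (or $\{1,\ldots,y_k-1\}$ when $k = 2N$, to enforce $n_{2N} \geq 1$); the middle digit $n_N$ free; and the top paired digits $n_{N+1}, \ldots, n_{N+M-1}$ free in $\{0,\ldots,b-1\}$ with their reflections then determined. I would then exploit the multiplicative factorisation of $e(\alpha n)$ over these independent digit blocks: the fixed digits produce a unimodular constant, while each free digit (or paired digit) contributes a geometric sum. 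Taking absolute values and bounding the $n_k$ factor by $b$ yields
\[ b \cdot \phi_b(\alpha b^N) \cdot \prod_{m=1}^{M-1} \phi_b\bigl(\alpha(b^{N+m} + b^{N-m})\bigr) \leq b^2 \prod_{m=1}^{M-1}\phi_b\bigl(\alpha(b^{N+m} + b^{N-m})\bigr). \]
The residual cases (the unique equal-top palindrome and the first undercut at $k = N$) together contribute at most $y_N + 1 \leq b \leq b^2 \Phi_0(\alpha b^N)$, supplying the $M = 0$ term of the claimed bound.

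To finish, I would verify the index identification $\prod_{m=1}^{M-1}\phi_b(\alpha(b^{N+m} + b^{N-m})) = \Phi_M(\alpha b^{N-M})$, which follows from $b^{N-M}(b^n + b^{2M-n}) = b^{N-M+n} + b^{N+M-n}$ combined with the substitution $n \mapsto M - n$ in the product defining $\Phi_M$. Summing the bound over $M$ and then over $N$ gives the lemma. The main obstacle is combinatorial bookkeeping: in each first-undercut case one must carefully track which digits are forced by $y$, which are coupled by the palindrome constraint, and which remain genuinely free, and then ensure the index conventions align with the specific form $\Phi_M(\alpha b^{N-M})$ demanded by the lemma statement; the analytical content itself is reduced to the trivial bounds $\phi_b \leq b$ and triangle inequalities.
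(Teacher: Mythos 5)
Your proposal is correct and follows essentially the same route as the paper: split by the number of digits, classify the palindromes in each block by the most significant position at which they undercut a reference $y$, factor the exponential sum over the free paired digits, bound the truncated and middle-digit sums trivially by $b$, and reindex to recognize $\Phi_M\left(\alpha b^{N-M}\right)$. The only (harmless) difference is that the paper takes $y$ to be the largest palindrome of the block not exceeding $x$, whereas you compare against $\min(\lfloor x\rfloor, b^{2N+1}-1)$ and absorb the possibly non-palindromic reference into the single equal-top residual term.
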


\begin{proof}
	From the definition of $\mathscr{P}_b^0(x)$ and the triangle inequality,
	$$
	\left| \sum_{n \in \mathscr{P}^0_b(x)} e(\alpha n)\right| \leq \sum_{0 \leq N \leq \frac{1}{2} \log_b x} \left|\sum_{\substack{n \in \Pi_b(2N) \\ n\leq x}}e(\alpha n)\right|, 
	$$
	where
	\begin{equation}\label{eqn: def of Pi}
		\Pi_b(2N) := \left\{n \in \mathscr{P}_b \ : \ \lfloor \log_b n \rfloor = 2N\right\} = \mathscr{P}_b \cap [b^{2N}, b^{2N + 1}).
	\end{equation}
	The sum inside the vertical brackets equals
	$$
	\sum_{\substack{n \in \Pi_b(2N) \\ n\leq y}}e(\alpha n),
	$$
	where $y$ is the largest palindrome in $\Pi_b(2N)$ satisfying $y \leq x$. If no such palindrome $y$ exists, the sum is trivially zero. Otherwise we may write 
	$$
	y = 	 y_N b^N + \sum_{0 \leq j < N} y_j \left(b^j + b^{2N-j}\right)
	$$ for some digits $0 \leq y_j < b$ with $y_0 > 0$. 
	
	We now seek to establish the inequality 
	\begin{equation}\label{eqn: palind exp sum}
		\left|\sum_{\substack{n \in \Pi_b(2N) \\ n\leq y } } e(\alpha n)\right| \leq b^2 \sum_{0 \leq M \leq N} \Phi_{M}\left(\alpha b^{N-M}\right).
	\end{equation}  
	To this end, we first define for an integer $\lambda \geq 0$,	
	$$
	\Psi_\lambda(\alpha) := \sum_{0 \leq c_N < b} e\left(\alpha c_N b^N\right) \prod_{\lambda < n < N} \sum_{0 \leq c < b} e\left(\alpha c \left(b^n + b^{2N-n}\right)\right).
	$$ 
	We have
	\begin{align*}
		&\sum_{\substack{n \in \Pi_b(2N) \\ n\leq y } } e(\alpha n) \\
		&= \Psi_0(\alpha) \sum_{1 \leq c < y_0} e\left(\alpha c\left(1 + b^{2N}\right)\right) \\
		&
		+ \Psi_1(\alpha) e\left(\alpha y_0\left(1 + b^{2N}\right)\right) 
		\sum_{0 \leq c < y_1}e\left(\alpha c \left(b + b^{2N-1}\right)\right) \\
		&+ 
		\Psi_2(\alpha) e\left(\alpha y_0\left(1 + b^{2N}\right) + \alpha y_1 \left(b + b^{2N-1}\right)\right) \sum_{0 \leq c < y_2} e\left(\alpha c\left(b^2 + b^{2N-2}\right)\right) \\
		&+\cdots\\
		&+
		\Psi_{N-1}(\alpha) e\left(\alpha\sum_{0 \leq j < N-1} y_j \left( b^j + b^{2N-j}\right)\right)\sum_{0 \leq c < y_{N-1}} e\left(\alpha c\left(b^{N-1} + b^{2N-(N-1)}\right)\right)\\
		&+
		e\left(\alpha\sum_{0 \leq j < N} y_j \left( b^j + b^{2N-j}\right)\right) \sum_{0 \leq c \leq y_N } e\left(\alpha c b^N\right). 
	\end{align*}
	Now the result follows when we note that 
	$$
	\prod_{\lambda < n < N} \sum_{0 \leq c < b} e\left(\alpha c \left(b^n + b^{2N-n}\right)\right) = \prod_{1 \leq n < N - \lambda} \sum_{0 \leq c < b} e\left(\alpha b^{\lambda}  c \left(b^n + b^{2(N-\lambda)-n}\right)\right) 
	$$	
	and take absolute values. 
\end{proof}	

From the lemma above it is clear that to study upper bounds for sums such as $\sum_{n \in \mathscr{P}_b^0(x)}e(\alpha n)$, it is sufficient to study the products $\Phi_N$ or, more generally, products such as
\begin{equation}\label{def: P_M}
P_M(\alpha) := \prod_{1 \leq n \leq M} \phi_b\left(\alpha\left(b^n + b^{2N-n}\right)\right)
\end{equation}
with $M \leq 2N$.
Of course the product's definition depends also on $b,N$, but for the sake of brevity we omit these on the left hand side above. 

Let us now state Col's result in our context. This will be needed later on. We give a proof somewhat different to that of Col \cite{Col} in its initial steps, although it ultimately boils to considering lower bounds for sums such as $\sum_{n \leq M}\|ab^n/q\|^2$, just as in Col's argument.

\begin{prop}[Col \citelist{\cite{Col}, Corollary 4}]\label{prop: Linfty bound}
	Let $h,q,k,M,N$ be integers with $q \geq 2$, $(q,h(b^3-b))=1$ and $0 \leq M \leq 2N$. Then for $P_M$ defined as in (\ref{def: P_M}),
	$$
	P_M\left(\dfrac{h}{q} + \dfrac{k}{b^3-b}\right) \ll_b b^{M} \exp\left(-\sigma_\infty(b) \dfrac{M}{\log q}\right)
	$$
	uniformly in $h,k,N$, where $\sigma_\infty(b) > 0$ is some value depending only on $b$.
\end{prop}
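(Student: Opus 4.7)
The plan is to pair consecutive factors in $P_M$ via Lemma \ref{lem: bound for product of pair}, reducing the product to one involving $\phi_b$ applied to a geometric progression modulo $q$; then to apply the exponential bound Lemma \ref{lem: exponential bound} to each factor; and finally to establish a Diophantine lower bound on the resulting sum of squared fractional parts.

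First I would write $\alpha = h/q + k/(b^3 - b)$ and $K = \lfloor M/2 \rfloor$. Applying Lemma \ref{lem: bound for product of pair} with $\beta = 2i - 1$ and $\gamma = 2N - 2i$ will bound the product of the factors of $P_M$ at the indices $n = 2i - 1$ and $n = 2i$ by $b\,\phi_b(\|\alpha(b^2 - 1)b^{2N - 2i}\|/(b+1))$, for each $i \in \{1, \ldots, K\}$; any unpaired factor (when $M$ is odd) is bounded trivially by $b$. Since $(q, b^3 - b) = 1$, the integer $a := h(b^2 - 1)$ is coprime to $q$, and a direct computation will give $\alpha(b^2-1)b^{2N-2i} = ab^{2N-2i}/q + k b^{2N-2i-1}$, whose second term is an integer for $1 \leq i \leq N - 1$. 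Hence $\|\alpha(b^2 - 1)b^{2N - 2i}\| = \|ab^{2N - 2i}/q\|$ in that range; the boundary case $i = N$ (if $K = N$) contributes at most an $O_b(1)$ factor. Since this norm is at most $1/2$, the ratio $\|ab^{2N - 2i}/q\|/(b+1) \leq 1/(2(b+1)) \leq 1/b$, so Lemma \ref{lem: exponential bound} applies and yields a constant $c_b > 0$ depending only on $b$ such that
$$
P_M(\alpha) \ll_b b^M \exp\left(-c_b \sum_{i=1}^{K} \left\|\dfrac{ab^{2N - 2i}}{q}\right\|^2\right).
$$
Setting $B \equiv b^2$ and $c \equiv a\, b^{2(N-K)} \pmod q$ (both coprime to $q$), the inner sum rewrites as $\sum_{j=0}^{K-1} \|cB^j/q\|^2$.

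The hard part will be to establish the Diophantine lower bound
$$
\sum_{j=0}^{K-1}\left\|\dfrac{cB^j}{q}\right\|^2 \gg_b \dfrac{K}{\log q}
$$
for $(c,q) = (B,q) = 1$, which immediately completes the proof since $K \asymp M$. The intuition is that the residues $\{cB^j \bmod q\}$ cannot cluster near $0$: for $j_1 < j_2$ both in $S_\delta := \{0 \leq j < K : \|cB^j/q\| \leq \delta\}$, the congruence $B^{j_2 - j_1} \equiv \epsilon_2 \epsilon_1^{-1} \pmod q$ must hold with $\epsilon_i \equiv cB^{j_i} \pmod q$ satisfying $|\epsilon_i| \leq \delta q$; a pigeonhole argument over pairs of small residues will then bound $|S_\delta|$, and the standard dyadic decomposition $\sum \|\cdot\|^2 = \int_0^{1/2} 2t(K - |S_t|)\,dt$ should convert this into the desired lower bound. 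The genuinely delicate point arises when the multiplicative order $T = \operatorname{ord}_q(B)$ is very small relative to $q$, in which regime one must exploit the arithmetic structure of the subgroup $\langle B \rangle$ in the unit group modulo $q$; this is precisely the step where Col's original argument requires additional care.
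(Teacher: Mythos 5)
Your first half is essentially the paper's argument: pairing the factors via Lemma \ref{lem: bound for product of pair}, observing that the $k/(b^3-b)$ contribution becomes an integer after multiplication by $(b^2-1)b^n$ with $n\ge 1$ (so the bound is uniform in $k$), and applying Lemma \ref{lem: exponential bound} to reach
$P_M(\alpha)\ll_b b^M\exp\bigl(-c_b\sum_i\|a b^{2N-2i}/q\|^2\bigr)$ with $a=h(b^2-1)$ coprime to $q$. Up to the cosmetic difference that the paper pairs overlapping adjacent factors and takes square roots while you pair disjoint consecutive factors, this part is correct and matches the paper.

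The genuine gap is the step you yourself label ``the hard part'': the Diophantine lower bound $\sum_{j=0}^{K-1}\|cB^j/q\|^2\gg_b K/\log q$ is never proved. Your sketch (bounding $|S_\delta|$ by a pigeonhole over pairs of small residues, then a dyadic integration) is not carried out, and you explicitly concede that the regime where $\operatorname{ord}_q(B)$ is small is left unresolved, so the proof is incomplete exactly at its crux. Note also that the inequality as you state it is false without a preliminary reduction: for $K=1$ and $(c,q)=1$ one only has $\|c/q\|^2\ge q^{-2}$, which is not $\gg 1/\log q$; the paper first disposes of the range $M<\log_b q$, where the claimed bound is trivial. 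Finally, the missing step does not require any structural analysis of the subgroup $\langle B\rangle$: the paper closes it with a short elementary argument (following Maynard). Since $(ab,q)=1$ and $q\ge 2$, one has $\|ab^n/q\|\ge 1/q$ for every $n$, and whenever $\|ab^n/q\|\le 1/(2b)$ the next term satisfies $\|ab^{n+1}/q\|=b\,\|ab^n/q\|$; hence in every window of $\lfloor\log_b q\rfloor$ consecutive exponents some term has $\|ab^n/q\|\ge 1/(2b^2)$, so the full sum over $1\le n\le M$ is $\ge \lfloor M/\log_b q\rfloor\cdot(4b^4)^{-1}\gg_b M/\log q$ once $M\ge \log_b q$, with no reference to the multiplicative order at all. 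Incorporating that argument (or an equivalent one) is what your proposal still needs to become a proof.
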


\begin{proof}
	We may assume that $M \geq \log_b q$ as otherwise the statement is trivial. 
	Now for the sake of brevity let $\alpha := \frac{h}{q} + \frac{k}{b^3-b}$. 
Grouping the factors in the product by pairs of adjacent factors, we have
	\begin{align*}
	P_M(\alpha) \leq b \prod_{1 \leq n <M} \sqrt{\phi_b\left(\alpha \left(b^n + b^{2N-n}\right)\right)\phi_b\left(\alpha\left(b^{n+1} + b^{2N - n-1}\right)\right)}.
	\end{align*}
	By Lemma \ref{lem: bound for product of pair} with $\beta = 2N - n - 1$ and $\gamma = n$, 
	$$
	\phi_b\left(\alpha \left(b^n + b^{2N-n}\right)\right)\phi_b\left(\alpha\left(b^{n+1} + b^{2N - n-1}\right)\right) \leq b\phi_b \left( \dfrac{\|\alpha(b^2-1)b^{n}\|}{b+1}\right).
	$$
	Thus
	$$
	P_M(\alpha) \leq b^{\frac{M-1}{2} + 1} \sqrt{\prod_{1 \leq n < M} \phi_b \left(\dfrac{\|\alpha (b^2-1)b^n\|}{b+1}\right)}.
	$$
	By Lemma \ref{lem: exponential bound} and the $1$-periodicity of $\|\cdot\|$, the above is
	\begin{align*}
		\ll b^M \exp\left(- \dfrac{\pi^2(b-1)}{12(b+1)}\sum_{1 \leq n \leq M} \left\| \dfrac{h(b^2-1)b^n}{q}\right\|^2\right)\end{align*}
	uniformly in $k,N$. Since $q \geq 2$ and $(q,h(b^3-b))=1$ by assumption, each term in the sum above is $\geq 1/q^2$. In particular the result holds if $q < b$. Consider now the case when $q \geq b$. Here we argue as done by Maynard \cite{Maynard} in Lemma 10.1 there. 
	Dividing the sum above into subsums over segments of length $\lfloor \log_b q\rfloor$, we claim it is
	$$
	\geq \left\lfloor \dfrac{M}{\log_b 	q}\right\rfloor \min_{(a,q)=1}\sum_{1 \leq n \leq \log_b q} \left\| \dfrac{a b^n}{q}\right\|^2.
	$$
	Indeed, with the possible exception of one subsum (coming from the tail with $n$ within a distance $ < \lfloor\log_b q\rfloor$ from $M$, and lower-bounded by zero) each subsum can be written as
	$$
	\sum_{1 \leq n \leq \log_b q} \left\| \dfrac{h(b^2-1)b^{m+n}}{q} \right\|^2 
	$$
	for some integer $m \geq 0$. There are at least $\lfloor M/\log_b q\rfloor$ subsums of this type. Since $(q,h(b^3-b))=1$ by assumption, the claim follows. 
	Now since $q \geq 2$ and $(q,ab)=1$, we have $\|ab^n/q\| \geq 1/q$ for each $n$. Moreover if $\|ab^n/q\| \leq 1/2b$, then $\|ab^{n+1}/q\| = b\|ab^n/q\|$. Consequently there exists an integer $1 \leq n \leq \log_b q$ for which $\|ab^n/q\| \geq 1/2b^2$. Hence 
	$$
	\sum_{1 \leq n \leq \log_b q} \left\| \dfrac{a b^n}{q}\right\|^2 \geq \dfrac{1}{4b^4}
	$$
	and the result follows. 
\end{proof}

The bound due to Col \cite{Col} in Proposition \ref{prop: Linfty bound} is non-trivial in the range $q \ll e^{o(M)}$ and is rather general in the sense of the lack of restrictions on the arithmetic shape of $q$. As such, it considerably improved over the corresponding general bound due to Banks-Hart-Sakata \cite{Banks-Hart-Sakata} (see Lemma 3.2 there) non-trivial only in the range $q = o(\sqrt{M})$ (strictly speaking, the bounds of Banks-Hart-Sakata were essentially given for products such as $\Phi_N$ and not the product over $1 \leq n \leq M$ in Proposition \ref{prop: Linfty bound}, but their work easily extends to this).

In what follows $\text{ord}_q(b)$ denotes the multiplicative order of $b$ modulo $q$ (whenever $(q,b)=1$). 
In the range $q \ll M^2$ with $P^-(q) > b$ satisfying $\tau(q)\sqrt{q} \ll \text{ord}_q(b) \ll M$ (with suitable implied constants) the arguments of Banks-Hart-Sakata \cite{Banks-Hart-Sakata} (see Lemma 3.1 there) yield power-saving bounds of the form $P_M(h/q)\ll b^{\delta M}$ with $0 < \delta < 1$ fixed or depending only on $b$. Although applicable only on the much smaller range $q \ll M^2$, this is superior in strength over that in Proposition \ref{prop: Linfty bound} for such $q$. Indeed, the latter attains (at best) power-savings only for $q \ll 1$. In few words, their argument can be explained as follows:

Assume $(h,q) = 1$. Squaring $P_M(h/q)$, applying the inequality of the arithmetic and geometric means, expanding the square and switching orders of summation, all in turn, one has
\begin{align}
P_M^{2/M}(h/q) &\leq  \sum_{0 \leq c_1, c_2 < b}\dfrac{1}{M}\sum_{1 \leq n \leq M} e_q\left(h(c_1-c_2)\left(b^n + b^{2N-n}\right)\right) \label{eqn: AGM bound}\\
&= 
 b + \sum_{0 \leq c_1 \neq c_2 < b}\dfrac{1}{M}\sum_{1 \leq n \leq M} e_q\left(h(c_1-c_2)\left(b^n + b^{2N-n}\right)\right).\nonumber
\end{align}
If we impose the restriction $P^-(q) > b$, then $(c_1 - c_2, q)=1$ for $0 \leq c_1 \neq c_2 < b$. In this case
\begin{equation}\label{eqn: bound for P_M}
P_M^{2/M}(h/q) \leq b + \dfrac{b^2}{M} \max_{a,k \in (\mathbb{Z}/q\mathbb{Z})^\times} S_q(M,a,k), 
\end{equation}
where
\begin{equation}\label{def: S}
S_q(M,a,k) := \left|\sum_{1 \leq n \leq M} e_q\left(ab^n + k\bar{b}^{n}\right)\right|.
\end{equation}
Here $\overline{b}$ denotes the multiplicative inverse of $b$ modulo $q$. We note that sums such as these also appear for instance in Banks-Shparlinski \cite{Banks-Shparlinski}, Bourgain \cite{Bourg} (see Theorem \ref{thm: Bourgain} below) Humphries \cite{Humphries}, Ostafe-Shparlinski-Voloch \cite{Ostafe} and Popova \cite{Popova}. 

Using well-known bounds for twisted Kloosterman sums, one derives (see Lemma 2.1 in \cite{Banks-Hart-Sakata})
$$
S_q(M,a,k) \leq \dfrac{M \tau(q) \sqrt{q} }{\text{ord}_q(b)} + \text{ord}_q(b).
$$
For this to be non-trivial, it is necessary that $\tau(q)\sqrt{q} \ll \text{ord}_q(b) \ll M$; hence their assumptions on $q$. One thus obtains
$$
P_M^{2/M}(h/q) \leq b + b^2 \left(\dfrac{\tau(q) \sqrt{q}}{\text{ord}_q(b)} + \dfrac{\text{ord}_q(b)}{M}\right).
$$

Banks-Shparlinski \cite{Banks-Shparlinski} also studied the sum $S_q(M,a,k)$ after the work of Banks-Hart-Sakata \cite{Banks-Hart-Sakata}
and obtained a bound for $P_M(h/p)$, with $p$ prime, essentially comparable to Col's result in Proposition \ref{prop: Linfty bound} for $q=p \ll M^2/\log^4 M$ with no restrictions on $\text{ord}_p(b)$; see their Theorem 6.

From the argument above, it is clear that improving bounds for $S_q(M,a,k)$ and/or enlarging the range of applicability of such bounds, may yield stronger results. However due to the presence of the $b$ term in (\ref{eqn: bound for P_M}) coming from the diagonal cases $c_1=c_2$ in (\ref{eqn: AGM bound}), one can never do better than $P_M \ll b^{M/2}$ via this argument, regardless of the quality of the bounds available for $S_q(M,a,k)$.  

Soon after the works of Banks-Hart-Sakata \cite{Banks-Hart-Sakata} and Banks-Shparlinski \cite{Banks-Shparlinski}, Bourgain \cite{Bourg} obtained the following result. First given a prime $p$, let $\mathbb{F}_p$ be the finite field with $p$ elements and let $\mathbb{F}_p^*$ be its subset of non-zero elements, endowed with the usual properties of $\mathbb{F}_p$. Given $\theta \in \mathbb{F}_p^*$, we use the notation $\text{ord}(\theta)$ to denote its multiplicative order. 

\begin{thm}[Bourgain \citelist{\cite{Bourg}, Theorem 2}]\label{thm: Bourgain}
	Let $\epsilon > 0$, let $p$ be a prime number and let $\theta_1, \ldots, \theta_r \in \mathbb{F}_p^*$ satisfying $\text{ord}(\theta_j) > p^\epsilon$ and $\text{ord}(\theta_i \theta_j^{-1}) > p^{\epsilon}$ for each $1 \leq i\neq j\leq r$. Then for any integer $N > p^\epsilon$ and any $a_1, \ldots, a_r \in \mathbb{F}_p^*$, 
	$$
	\left|\sum_{n=1}^N e_p\left(\sum_{j=1}^r a_j \theta_j^n \right)\right| < \dfrac{N}{p^\delta},
	$$
	where $0 < \delta = \delta(\epsilon) < 1$ is some value depending only on $\epsilon$. 
\end{thm}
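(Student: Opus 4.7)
The plan is to prove Theorem \ref{thm: Bourgain} by combining Weyl differencing with the Bourgain-Glibichuk-Konyagin (BGK) sum-product estimate for multiplicative subgroups of $\mathbb{F}_p^*$. The principal obstacle is that $N$ may be as small as a tiny power of $p$, placing us well beyond the Weil regime where the Riemann hypothesis for curves would yield square-root cancellation.

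First I would dispose of the single-generator case $r=1$. The sum $\sum_{n=1}^N e_p(a\theta^n)$ lives naturally on the cyclic subgroup $H = \langle \theta\rangle$ of order $t > p^\epsilon$. If $N \geq t$, this reduces (after unfolding periodicity) to a sum over the full subgroup $H$, to which one applies the BGK estimate
\[
\left|\sum_{x \in H} e_p(ax)\right| \ll |H|\,p^{-\delta}
\]
for some $\delta = \delta(\epsilon) > 0$. If instead $N < t$, I would complete the sum using Fourier inversion on $\mathbb{Z}/t\mathbb{Z}$: expand the indicator $\mathbf 1_{1 \leq n \leq N}$ in additive characters of $\mathbb{Z}/t\mathbb{Z}$ so that the problem reduces to sums over all of $H$, twisted by factors $\sum_{n=1}^N \chi(n)$ whose absolute values satisfy the standard $\min(N, 1/\|\cdot\|)$ bound. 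Since $N > p^\epsilon$, summing geometrically against the savings from BGK yields the claim.

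For the general case $r \geq 2$ the strategy is to apply Cauchy-Schwarz (Weyl differencing) $k$ times, obtaining
\[
|S|^{2^k} \leq N^{2^k - 1} \sum_{h_1, \ldots, h_k} \left|\sum_n e_p\!\left(\sum_{j=1}^r a_j \theta_j^n \prod_{i=1}^k (\theta_j^{h_i} - 1)\right)\right|,
\]
and to leverage the fact that the joint orbit $\{(\theta_1^n, \ldots, \theta_r^n)\}$ fills out a non-degenerate multiplicative subgroup of $(\mathbb{F}_p^*)^r$. The hypothesis $\mathrm{ord}(\theta_i\theta_j^{-1}) > p^\epsilon$ is crucial at this step: it forbids the partial collision $\theta_i^n \approx \theta_j^n$ (up to a root of unity of small order) that would otherwise collapse the multi-term exponential into a single-generator one with a possibly vanishing coefficient, destroying cancellation. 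After enough differencings, one can peel off an outer sum and apply a multivariable BGK-type bound to the inner sum to extract the required saving of $p^{-\delta}$ with $\delta$ depending only on $\epsilon$.

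The main obstacle, and the heart of Bourgain's method, lies in the underlying sum-product machinery: establishing that any multiplicative subgroup $H \subseteq \mathbb{F}_p^*$ with $|H| > p^\epsilon$ grows under addition, and that this growth translates into nontrivial Fourier bounds. Extending these scalar statements to the joint $r$-dimensional setting, where several independent orbits must be handled simultaneously, requires a delicate inductive bookkeeping of the exponents $h_1, \ldots, h_k$ and a careful choice of $k = k(\epsilon, r)$. Managing the dependence of all constants on $\epsilon$ alone (and not on $r$ or the specific $\theta_j$) is where the technical effort is concentrated, and is the step I would expect to be hardest to push through cleanly.
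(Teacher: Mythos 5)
You should first note that the paper does not prove this statement at all: it is imported verbatim as Theorem 2 of Bourgain \cite{Bourg} and used as a black box, so there is no internal proof to compare against. Judged on its own terms, your proposal is a reasonable roadmap of the relevant circle of ideas (completion, Weyl differencing, sum--product/Fourier estimates for multiplicative subgroups), but it is not a proof, because the step you repeatedly invoke --- a ``BGK estimate'' in the single-generator case and a ``multivariable BGK-type bound'' after differencing --- is precisely the hard content of the theorem. In the case $r \geq 2$ that invocation is essentially circular: extracting $p^{-\delta}$ cancellation for sums attached to several multiplicative orbits, uniformly in the coefficients and with $\delta$ depending only on $\epsilon$, is what Bourgain's paper establishes through its sum--product and energy (solution-counting) machinery, and nothing in your outline substitutes for it.

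There are also concrete gaps in the steps you do describe. In the $r=1$ completion argument, after expanding $\mathbf{1}_{1 \leq n \leq N}$ in characters of $\mathbb{Z}/t\mathbb{Z}$, the inner complete sums are of the form $\sum_{x \in H}\chi(x)e_p(ax)$ with $\chi$ a nontrivial multiplicative character of $H$; these twisted subgroup sums are not covered by the plain statement $\bigl|\sum_{x\in H}e_p(ax)\bigr| \ll |H|p^{-\delta}$ and require a separate (known but nontrivial) estimate. In the differencing step, the coefficients $a_j\prod_{i}(\theta_j^{h_i}-1)$ can vanish and the differenced frequencies can collide for distinct $j$; showing that the exceptional tuples $(h_1,\ldots,h_k)$ are few is exactly where the hypotheses $\mathrm{ord}(\theta_j) > p^\epsilon$ and $\mathrm{ord}(\theta_i\theta_j^{-1}) > p^\epsilon$ must be used quantitatively, and you acknowledge but do not carry out this bookkeeping, nor the choice of $k$ and the tracking of how $\delta$ survives it. As it stands the proposal defers all of the genuinely difficult analysis to an unproved multi-generator estimate, so it does not constitute a proof; for the purposes of this paper the correct course is the one the authors take, namely citing Bourgain's Theorem 2 directly.
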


We recognize that the sum, in Theorem \ref{thm: Bourgain}, with $r=2, a_1=a, a_2=k, \theta_1 = b, \theta_2 = \bar{b}$, $N=M$, is precisely $S_p(M,a,k)$. Thus we have the following. 

\begin{cor}\label{cor: cor of Bourgain}
	Let $M $ be an integer. Then for any prime $p$ satisfying $(p,b)=1$ and $p^{\epsilon} < \min(M, \text{ord}_p(b^2))$ for some $\epsilon > 0$, we have
$$
\max_{a,k \in (\mathbb{Z}/p\mathbb{Z})^\times} S_p(M,a,k) < \dfrac{M}{p^{\delta(\epsilon)}},
$$
where $\delta(\epsilon) > 0$ depends only on $\epsilon$, and $S_p(M,a,k)$ is defined as in (\ref{def: S}). 
\end{cor}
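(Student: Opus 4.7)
The plan is to apply Bourgain's Theorem \ref{thm: Bourgain} directly, specialized to $r = 2$ with parameters $a_1 = a$, $a_2 = k$, bases $\theta_1 = b$, $\theta_2 = \bar{b}$, and $N = M$. Here $\bar{b}$ denotes the multiplicative inverse of $b$ in $\mathbb{F}_p^*$, which exists because $(p,b) = 1$. With these choices the sum that appears in Bourgain's theorem is exactly $\sum_{n=1}^{M} e_p(a b^n + k \bar{b}^n)$, whose absolute value is $S_p(M,a,k)$ by definition.

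The bulk of the proof is therefore the verification that Bourgain's hypotheses are satisfied. The coefficients $a,k$ lie in $\mathbb{F}_p^*$ by assumption, and the condition $N > p^\epsilon$ holds since $N = M > p^\epsilon$ by hypothesis. For the order conditions, observe that $\theta_1 \theta_2^{-1} = b \cdot b = b^2$ and $\theta_2 \theta_1^{-1} = b^{-2}$, both of which have order equal to $\text{ord}_p(b^2) > p^\epsilon$ by hypothesis. Moreover $\text{ord}_p(\bar{b}) = \text{ord}_p(b)$, and since $\text{ord}_p(b^2)$ always divides $\text{ord}_p(b)$, one has $\text{ord}_p(b) \geq \text{ord}_p(b^2) > p^\epsilon$. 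Thus both $\theta_1$ and $\theta_2$ meet the required lower bound on their multiplicative orders.

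With all hypotheses confirmed, Theorem \ref{thm: Bourgain} yields
$$
\left|\sum_{n=1}^{M} e_p(a b^n + k \bar{b}^n)\right| < \frac{M}{p^{\delta(\epsilon)}}
$$
for some $\delta(\epsilon) > 0$ depending only on $\epsilon$. Taking the maximum over $a, k \in (\mathbb{Z}/p\mathbb{Z})^\times$ then gives the claim. There is no real obstacle here: the content of the corollary is simply the recognition that $S_p(M,a,k)$ is an instance of the two-term exponential sum covered by Bourgain's result, with the hypothesis on $\text{ord}_p(b^2)$ tailored precisely to ensure that the ``distinctness'' condition $\text{ord}(\theta_i \theta_j^{-1}) > p^\epsilon$ holds for the pair $\{b, \bar{b}\}$.
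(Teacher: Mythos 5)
Your proposal is correct and follows essentially the same route as the paper: apply Theorem \ref{thm: Bourgain} with $r=2$, $a_1=a$, $a_2=k$, $\theta_1=b$, $\theta_2=\bar b$, $N=M$, noting that $\theta_1\theta_2^{-1}=b^2$ and that $\mathrm{ord}_p(b^{\pm 1})\geq \mathrm{ord}_p(b^{\pm 2})>p^{\epsilon}$, while $M>p^{\epsilon}$ by hypothesis. The paper's proof is just a one-line version of exactly this verification.
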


\begin{proof}
	Suffices to note $\text{ord}_p(b^{\pm 1}) \geq \text{ord}_p(b^{\pm 2}) > p^\epsilon$, the last holding by assumption.
\end{proof}

From this and (\ref{eqn: bound for P_M}) we may then obtain, under the same assumptions of Corollary \ref{cor: cor of Bourgain} with $p > b$, 
$$
P_M\left(\dfrac{h}{p}\right) \leq b^M \left(\dfrac{1}{b} + \dfrac{1}{p^{\delta}}\right)^{M/2}.
$$
One may improve this considerably, in Corollary \ref{cor: bound using product of Bourgain sequence} below, by combining use of Lemma \ref{lem: bound for product using KH and ET}, the Erd\H{o}s-T\'{u}ran inequality and Bourgain's bound. More generally we have the following.

\begin{prop}\label{prop: product bound for modulus p}
With the same assumptions of Theorem \ref{thm: Bourgain},
$$
\prod_{1 \leq n \leq N} \phi_b\left(\dfrac{1}{p} \sum_{j=1}^r a_j \theta_j^n \right) \leq \exp\left( A(\epsilon) b\dfrac{ N  }{p^{\delta(\epsilon)}}\right),
$$
where $A(\epsilon), \delta(\epsilon) > 0$ are some values depending only on $\epsilon$.
\end{prop}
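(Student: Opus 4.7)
The plan is to first estimate the star-discrepancy $D_N^*$ of the sequence $\alpha_n := \frac{1}{p}\sum_{j=1}^r a_j \theta_j^n$ and then feed this estimate into the Weyl product bound of Lemma \ref{lem: bound for product using KH and ET}. Let $\delta_0 = \delta(\epsilon) > 0$ denote the constant supplied by Bourgain's Theorem \ref{thm: Bourgain}. Since $D_N^* \leq D_N$, the Erd\H{o}s-Tur\'{a}n inequality (Lemma \ref{lem: erdos-turan}) yields
$$
D_N^*(\alpha_1, \ldots, \alpha_N) \ll \frac{1}{H} + \sum_{h=1}^H \frac{1}{h}\left|\frac{1}{N}\sum_{n=1}^N e_p\!\left(\sum_{j=1}^r (ha_j)\theta_j^n\right)\right|
$$
for any integer $H \geq 1$.

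For every $1 \leq h < p$, the coefficients $ha_1, \ldots, ha_r$ remain in $\mathbb{F}_p^*$ while the hypotheses on the $\theta_j$ are unaffected; hence (using $N > p^\epsilon$) Bourgain's Theorem \ref{thm: Bourgain} bounds each of these inner averages by $p^{-\delta_0}$. Choosing $H \asymp p^{\delta_0/2} < p$ and summing the geometric-type expression, one obtains
$$
D_N^* \ll_{\epsilon} \frac{\log p}{p^{\delta_0/2}} \ll_{\epsilon} \frac{1}{p^{\delta_0/3}},
$$
where in the last step the logarithmic factor is absorbed at the cost of a slight reduction of the exponent.

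Substituting into Lemma \ref{lem: bound for product using KH and ET} gives
$$
\prod_{1 \leq n \leq N} \phi_b(\alpha_n) \leq \left(\frac{2}{D_N^*}\right)^{AbN D_N^*} = \exp\!\left(Ab N \cdot D_N^* \log(2/D_N^*)\right),
$$
and since $D_N^* \log(2/D_N^*) \ll \log(p)/p^{\delta_0/3} \ll_{\epsilon} 1/p^{\delta_0/4}$, the exponent is at most $A(\epsilon) b N / p^{\delta(\epsilon)}$ upon setting $\delta(\epsilon) := \delta_0(\epsilon)/4$. For the bounded range of $p$ in which the absorption of the logarithm is invalid (equivalently $p = O_\epsilon(1)$), the trivial bound $\phi_b \leq b$ combined with a sufficiently large choice of $A(\epsilon)$ settles the statement, since $1/p^{\delta(\epsilon)}$ is bounded below on that range.

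The only real obstacle is the mechanical bookkeeping of the three-way balance between the Erd\H{o}s-Tur\'{a}n cutoff $H$, the Bourgain savings $p^{-\delta_0}$, and the Weyl factor $\log(2/D_N^*)$; once one accepts a final exponent strictly smaller than $\delta_0$, no genuine difficulty remains, because the logarithmic losses at each optimization step can simply be absorbed by a further mild reduction of the exponent.
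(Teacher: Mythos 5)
Your argument is correct and follows essentially the same route as the paper's proof: bound the star-discrepancy of $\alpha_n = \frac{1}{p}\sum_j a_j\theta_j^n$ via Erd\H{o}s--Tur\'{a}n combined with Bourgain's bound, then feed it into the Weyl product bound of Lemma \ref{lem: bound for product using KH and ET}. The only (harmless) difference is your cutoff $H \asymp p^{\delta_0/2}$, which sidesteps the $p \mid h$ terms, whereas the paper takes $H = p$ and bounds those terms trivially by $\ll \log(H)/p$.
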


\begin{proof}
	We may view each $\sum_{j=1}^r a_j \theta_j^n$ as a real number in the natural way.
	We may also assume that $p \gg_\epsilon 1$ is arbitrarily large. 
	Now for each integer $1 \leq n \leq N$, let 
	$$
	\alpha_n = \dfrac{1}{p} \sum_{j=1}^r a_j \theta_j^n.
	$$
	By Lemma \ref{lem: bound for product using KH and ET}, 
	$$
	\prod_{1 \leq n \leq N} \phi_b(\alpha_n) \leq \exp\left(AbND_N^* \log\left(\dfrac{2}{D_N^*}\right)\right) 
	$$
	for some absolute constant $A > 0$, where $D_N^*$ is the star-discrepancy of $(\alpha_n)_{n=1}^N$. By the Erd\H{o}s-Tur\'{a}n inequality (Lemma \ref{lem: erdos-turan}) and the fact $D_N^* \leq D_N$, we have
	$$
	D_N^* \ll \dfrac{1}{H} + \sum_{1\leq h \leq H} \dfrac{1}{h} \left|\dfrac{1}{N} \sum_{1 \leq n \leq N}e(h\alpha_n)\right| 
	$$
	for any $H \geq 2$. The contribution of the terms with $p \mid h$ to the series above is trivially $\ll \log(H)/p$. By Bourgain's bound in Theorem \ref{thm: Bourgain}, the terms with $p \nmid h$ contribute $\ll \log(H)/p^{\delta}$ to the sum, where $0 < \delta = \delta(\epsilon) < 1 $ is some value depending only on $\epsilon$. Thus
	$$
	D_N^* \ll \dfrac{1}{H} + \dfrac{\log H}{p^\delta} \ll \dfrac{\log p}{p^{\delta}} 
	$$
	after letting $H = p$. Since the function $x \mapsto x\log(2/x)$ is increasing on $(0, 2/e)$, then for $p \gg_\epsilon 1$ sufficiently large (as assumed)
	$$
	D_N^* \log(2/D_N^*) \ll \dfrac{\log^2 p}{p^{\delta}} \ll_\delta p^{-\delta/2}
	$$
	and the result follows.
\end{proof}

Specializing to the palindromes we have the following immediate consequence.

\begin{cor}\label{cor: bound using product of Bourgain sequence}
	With the same assumptions of Corollary \ref{cor: cor of Bourgain} and $M\leq 2N$ there, we have
	$$
	\max_{(h,p)=1}\prod_{1 \leq n \leq M} \phi_b\left(\dfrac{h}{p} \left(b^n + b^{2N-n}\right)\right) \leq \exp\left( A(\epsilon)b\dfrac{ M  }{p^{\delta(\epsilon)}}\right),
	$$
	where $A(\epsilon), \delta(\epsilon) > 0$ are some values depending only on $\epsilon$.
	\end{cor}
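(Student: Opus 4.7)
The plan is to derive the corollary as a direct specialization of Proposition \ref{prop: product bound for modulus p}, after rewriting the argument of $\phi_b$ as a sum of two geometric terms. The key observation is that, modulo $p$, one has $b^{2N-n}\equiv b^{2N}\cdot (b^{-1})^n$, and consequently
$$
\frac{h}{p}\left(b^n + b^{2N-n}\right) \;\equiv\; \frac{1}{p}\left(h\cdot b^n + (h b^{2N})\cdot (b^{-1})^n\right) \pmod{1}.
$$
Since $\phi_b$ is $1$-periodic, this rewriting does not alter any factor of the product in the corollary. So I would set $r=2$, $\theta_1=b$, $\theta_2=b^{-1}$ (viewed in $\mathbb{F}_p^*$, using $(p,b)=1$), $a_1=h$ and $a_2=hb^{2N}$, both elements of $\mathbb{F}_p^*$ since $(h,p)=1$.

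Next I would verify the hypotheses of Proposition \ref{prop: product bound for modulus p}, with the integer $N$ there playing the role of $M$ in the corollary. The order conditions hold because
$$
\mathrm{ord}(\theta_1)=\mathrm{ord}(\theta_2)=\mathrm{ord}_p(b)\;\geq\;\mathrm{ord}_p(b^2)\;>\;p^{\epsilon},
$$
and $\mathrm{ord}(\theta_1\theta_2^{-1})=\mathrm{ord}_p(b^2)>p^{\epsilon}$, both being consequences of the assumption inherited from Corollary \ref{cor: cor of Bourgain}. The size constraint $M>p^{\epsilon}$ is likewise in place. Applying the proposition then yields the bound $\exp\!\bigl(A(\epsilon)\,b\,M/p^{\delta(\epsilon)}\bigr)$ uniformly in $h$ with $(h,p)=1$, which is exactly the statement of the corollary.

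Because the map $(n\mapsto \theta_1^n,\theta_2^n)$ reproduces the palindromic shape $b^n+b^{2N-n}$ after scaling, no additional work is needed; in particular, the dependence on $N$ disappears into the coefficient $a_2=hb^{2N}$, which the proposition tolerates as an arbitrary nonzero element of $\mathbb{F}_p^*$. I do not anticipate a genuine obstacle here: all of the analytic content (the Erd\H{o}s--Tur\'an smoothing, the logarithmic-variation estimate from Lemma \ref{lem: bound for product using KH and ET}, and Bourgain's bound Theorem \ref{thm: Bourgain}) is already encapsulated in Proposition \ref{prop: product bound for modulus p}, and the remaining work is simply to recognize that the palindromic phase fits the two-term exponential framework of that proposition.
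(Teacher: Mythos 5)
Your proposal is correct and matches the paper's intended argument: the corollary is stated there as an immediate specialization of Proposition \ref{prop: product bound for modulus p} with $r=2$, $\theta_1=b$, $\theta_2=\bar{b}$, $a_1=h$, $a_2=hb^{2N}$, using the $1$-periodicity of $\phi_b$ and the congruence $b^{2N-n}\equiv b^{2N}\bar{b}^{\,n}\ (p)$, with the order hypotheses checked exactly as you do (via $\mathrm{ord}_p(b^{\pm 1})\geq \mathrm{ord}_p(b^2)>p^{\epsilon}$). Nothing is missing.
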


\section{Bounding moments of $\Phi_N$}\label{sec: moments}

Here we prove Proposition \ref{prop:2K-th moment2} bounding moments of $\Phi_N$.

\begin{prop}[$2K$-th moment]\label{prop:2K-th moment2}
	For any integers $N,K,b \geq 2$, we have
\begin{equation}\label{eqn: 2K integral}
\int_{0}^1 \Phi_N^{2K}(\alpha) d\alpha \leq b^{2(K-1)N + 2} \left(1 + O\left(\dfrac{1}{\sqrt{K}} + \dfrac{b^2}{K}\right)\right)^{2N}.
\end{equation}
	
If $\alpha_1, \ldots, \alpha_R \in \mathbb{R}/\mathbb{Z}$ are $\delta$-spaced for some $0 < \delta \leq 1/2$, then
\begin{equation}\label{eqn: 2K sieve sum}
\sum_{r=1}^R \Phi_N^{2K}\left(\alpha_r\right) \leq \left(\delta^{-1} + Kb^{2N}\right)b^{2(K-1)N + 2}\left(1 + O\left(\dfrac{1}{\sqrt{K}} + \dfrac{b^2}{K}\right)\right)^{2N}.
\end{equation}
In particular for any $Q \geq 1$ and uniformly in $\beta \in \mathbb{R/\mathbb{Z}}$,
	\begin{equation}\label{eqn: sieve fractions}
	\sum_{q \leq Q} \sum_{h(q)}^* \Phi_N^{2K}\left(\dfrac{h}{q} + \beta\right) \leq \left(Q^2 + Kb^{2N}\right)b^{2(K-1)N + 2} \left(1 + O\left(\dfrac{1}{\sqrt{K}} + \dfrac{b^2}{K}\right)\right)^{2N}.
	\end{equation}
\end{prop}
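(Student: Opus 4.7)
The plan is to prove (\ref{eqn: 2K integral}) first, then derive (\ref{eqn: 2K sieve sum}) via the large sieve and (\ref{eqn: sieve fractions}) by specializing to Farey fractions. The starting point is the identity
$$\Phi_N(\alpha) = \bigl|\hat{\mathcal{S}}(\alpha)\bigr|, \qquad \hat{\mathcal{S}}(\alpha) := \sum_{p \in \mathcal{S}} e(\alpha p), \qquad \mathcal{S} := \Big\{\sum_{n=1}^{N-1} m_n(b^n + b^{2N-n}) : m_n \in [0,b)\cap\mathbb{Z}\Big\},$$
where $|\mathcal{S}| = b^{N-1}$, since distinct digit tuples $(m_n)$ occupy disjoint positions in the base-$b$ expansion without carries. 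Writing $\hat{\mathcal{S}}(\alpha)^K = \sum_s r_K(s)\,e(\alpha s)$ with $r_K(s) := \#\{(p_i) \in \mathcal{S}^K : \sum_i p_i = s\}$ supported on $\{0,1,\ldots,Kb^{2N}\}$, Parseval gives $\int_0^1 \Phi_N^{2K}(\alpha)\,d\alpha = \sum_s r_K(s)^2$. Parameterizing each element of $\mathcal{S}$ by its digit tuple and setting $u_n := \sum_i m_n^{(i)} - \sum_j \tilde{m}_n^{(j)}$, this rewrites as
$$\int_0^1 \Phi_N^{2K}(\alpha)\,d\alpha = \sum_{\substack{(u_n) \in \mathbb{Z}^{N-1},\ |u_n| \leq K(b-1) \\ L(u) := \sum_n u_n(b^n + b^{2N-n}) = 0}} \prod_{n=1}^{N-1} c(u_n),$$
with $c(u) := \sum_s r(s;K,b)\,r(s-u;K,b) \geq 0$ for $r$ as in (\ref{eqn: num of compo}).

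For (\ref{eqn: 2K integral}), I would invoke Lemma \ref{lem:comp2}, which shows $r(s;K,b)/b^K$ to be Gaussian of variance $\asymp b^2 K$ up to additive error $O(1/(bK^{3/2}))$; convolving yields a Gaussian-type estimate for $c(u)/b^{2K}$ with peak $c(0)/b^{2K} \leq \sqrt{3/(\pi(b^2-1)K)}\bigl(1 + O(1/\sqrt{K} + b^2/K)\bigr)$ and rapid decay of $c(u)/c(0)$ in $|u|$; raising to the $(N-1)$-st power produces the factor $(1+O(\cdot))^{2N}$. The equation $L(u) = 0$ with $|u_n| \leq K(b-1)$ sharply restricts $(u_n)$: writing $L = A + b^{N+1} B'$ with $A := \sum_n u_n b^n$ and $B' := \sum_n u_n b^{N-1-n}$, the bound $|A| < Kb^N$ forces $|B'| < K/b$, confining $(u_n)$ to a thin lattice slice whose weighted contribution is controlled by combining the Gaussian decay of $c$ with a base-$b$ carrying analysis of the solutions to $L(u) = 0$.

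For (\ref{eqn: 2K sieve sum}), apply Lemma \ref{lem: large sieve} to $(r_K(s))_s$ at the $\delta$-spaced points $(\alpha_r)$, obtaining
$$\sum_{r=1}^R \Phi_N^{2K}(\alpha_r) = \sum_r \bigl|\hat{\mathcal{S}}(\alpha_r)\bigr|^{2K} \leq (\delta^{-1} + Kb^{2N}) \sum_s r_K(s)^2 = (\delta^{-1} + Kb^{2N}) \int_0^1 \Phi_N^{2K}(\alpha)\,d\alpha,$$
and combine with (\ref{eqn: 2K integral}). For (\ref{eqn: sieve fractions}), the set $\{h/q + \beta : q \leq Q,\ (h,q) = 1\}$ is $Q^{-2}$-spaced modulo $1$, since $\|h/q - h'/q'\| \geq (qq')^{-1} \geq Q^{-2}$ for distinct reduced fractions; setting $\delta = Q^{-2}$ in (\ref{eqn: 2K sieve sum}) gives the claim. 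The main obstacle will be (\ref{eqn: 2K integral}): performing the weighted Diophantine count $\sum_{(u_n):L(u) = 0} \prod c(u_n)$ so that the peak at $u = 0$ dominates up to the prescribed error $(1 + O(1/\sqrt{K} + b^2/K))^{2N}$. Nontrivial solutions to $L(u) = 0$ emerge once $|u_n|$ exceeds $b/2$ via base-$b$ carries bridging the digit blocks $[1, N-1]$ and $[N+1, 2N-1]$; controlling these through the small parameter $B'$ (with $|B'| < K/b$) together with the Gaussian decay of $c(u)$ should prevent the appearance of factors growing with $N$, though the combinatorics must be handled carefully.
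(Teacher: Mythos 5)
Your outer skeleton coincides with the paper's: the identity $\Phi_N=|\hat{\mathcal S}|$, Parseval giving $\int_0^1\Phi_N^{2K}=\sum_s r_K(s)^2$, the large sieve applied to the coefficients $r_K(s)$ for the $\delta$-spaced points, and the $Q^{-2}$-spacing of the reduced fractions $h/q+\beta$ are all exactly the steps taken there, and these parts of your proposal are correct. The problem is that all of this is the easy half. The entire content of (\ref{eqn: 2K integral}) is the bound on $\sum_s r_K(s)^2=\sum_{L(u)=0}\prod_n c(u_n)$, and for this you propose a direct weighted lattice-point count which you do not carry out: the decomposition $L=A+b^{N+1}B'$ and the observation $|B'|<K/b$ only tell you that $B'$ ranges over $O(K/b)$ values; for each such value you still must bound the weighted number of solutions $u\in\mathbb{Z}^{N-1}$, $|u_n|\le K(b-1)$, to the \emph{system} $\sum_n u_n b^n=-b^{N+1}B'$, $\sum_n u_n b^{N-1-n}=B'$ (two ``digit-reversed'' equations coupled through the same $u$). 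Once $K\gtrsim b$, carries make the solution set of each single equation exponentially large in the number of nonzero coordinates, and showing that the Gaussian decay of $c(u_n)$ defeats this combinatorial growth uniformly in $N$ --- i.e.\ that the off-diagonal contribution stays within the factor $\left(1+O\!\left(\frac{1}{\sqrt K}+\frac{b^2}{K}\right)\right)^{2N}$ of the $u=0$ term --- is precisely the heart of the proposition. You flag this yourself (``the combinatorics must be handled carefully''), but no argument is given, so the key estimate remains unproved.

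For comparison, the paper does not count solutions at all at this point. It majorizes $r(\cdot;K,b)$ by a smooth compactly supported function $\eta_{K,b}$ (via Lemma \ref{lem:comp2}), re-expresses the constrained sum through the orthogonality identity as $\int_0^1\bigl|\prod_{n}\sum_{v}\eta_{K,b}(\cdot)\,e\bigl(\alpha v(b^n+b^{2N-n})\bigr)\bigr|^2d\alpha$, and then bounds this integral analytically: the smooth exponential-sum bounds of Lemma \ref{lem: smooth exp sums} give $\min(A,\,\csc^4)$-type factors, the adjacent-pair inequality of Lemma \ref{lem: bound for product of pair} converts the frequencies $b^n+b^{2N-n}$ into the geometric progression $\alpha(b^2-1)b^n$, and Lemmas \ref{lem: integral of product} and \ref{lem: vinogradov} finish the estimate. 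If you want to salvage your route, you would need to supply the missing Diophantine analysis (effectively a theta-series bound over the rank-$(N-2)$ lattice $\{L(u)=0\}$, requiring control of its short vectors); otherwise the cleaner path is the paper's reduction back to an integral, which avoids the carrying combinatorics entirely.
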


\begin{proof}
	%[{\bf Proof of Proposition \ref{prop:2K-th moment2}}]
	We first note that (\ref{eqn: sieve fractions}) follows from (\ref{eqn: 2K sieve sum}) since the points $h/q + \beta$ with $1 \leq h \leq q \leq Q$ and $(h,q)=1$ are $Q^{-2}$-spaced modulo $1$. Indeed, for any two distinct such fractions $h_1/q_1, h_2/q_2$,
	$$
	\left\| \dfrac{h_1}{q_1} - \dfrac{h_2}{q_2}\right\| \geq \dfrac{1}{q_1q_2} \geq \dfrac{1}{Q^2}.
	$$

	Let us now set $\psi_b(\alpha) := \sum_{0 \leq m < b}e(\alpha m)$. By definition,
	$$
	\Phi_N^{2K}(\alpha) = \left|\prod_{1 \leq n < N}\psi_b^K\left(\alpha \left(b^n + b^{2N-n}\right)\right)\right|^2.
	$$
	We have
	$$
	\prod_{1 \leq n < N} \psi_b\left(\alpha \left(b^n + b^{2N-n}\right)\right) = \sum_{0 \leq c_1, \ldots,c_{N-1} < b }e\left(\alpha  \sum_{1 \leq n < N}c_n \left(b^n + b^{2N-n}\right)\right)
	$$
	and one observes
	\begin{align*}
	&\prod_{1 \leq n < N} \psi_b^K\left(\alpha \left(b^n + b^{2N-n}\right)\right)\\
	&= \sum_{0 \leq v_1, \ldots, v_{N-1} \leq (b-1)K} \left(\prod_{1 \leq n < N} r(v_n; K,b) \right)e\left(\alpha \sum_{1 \leq m < N} v_m \left(b^m + b^{2N-m}\right)\right),
	\end{align*}
	where $r(v;K,b)$ is defined as in (\ref{eqn: num of compo}). We can rewrite this as 
	$$
	\prod_{1 \leq n < N} \psi_b^K\left(\alpha \left(b^n + b^{2N-n}\right)\right) = \sum_{0 \leq \ell \leq Kb^{2N}} a_\ell e(\alpha \ell),
	$$
	where
	$$
	a_\ell := \sum_{\substack{0 \leq v_1, \ldots, v_{N-1} \leq (b-1)K \\ \sum_{1 \leq m < N} v_m(b^m + b^{2N-m}) = \ell}}\prod_{1 \leq n <N} r(v_n;K,b).
	$$
	Thus 
	$$
	\Phi_N^{2K}\left(\alpha \right) = \left|\sum_{0\leq \ell \leq Kb^{2N}}a_\ell e(\alpha \ell)\right|^2.
	$$
	Parseval's identity then gives
	$$
	\int_0^1 \Phi_N^{2K}\left(\alpha \right)d\alpha = \sum_{0\leq \ell \leq Kb^{2N}} a_\ell^2
	$$
	while the large sieve inequality (Lemma \ref{lem: large sieve}) yields
	$$
	\sum_{r=1}^R \Phi_N^{2K}\left(\alpha_r \right)\leq \left(\delta^{-1} + Kb^{2N}\right)\sum_{0\leq \ell \leq Kb^{2N}} a_\ell^2
	$$
	for any $\delta$-spaced points $\alpha_1, \ldots, \alpha_R \in \mathbb{R}/\mathbb{Z}$. Thus to prove the proposition we must show that
	\begin{equation}\label{eqn: bound for sum of squares}
	\sum_{0\leq \ell \leq Kb^{2N}} a_\ell^2 \leq b^{2(K-1)N + 2} \left(1 + O\left(\dfrac{1}{\sqrt{K}} + \dfrac{b^2}{K}\right)\right)^{2N}.
	\end{equation}

	Expanding the square and switching orders of summation, we have
	$$
	\sum_{0 \leq \ell \leq Kb^{2N}}a_\ell^2 = \sum_{\substack{0\leq u_1, \ldots, u_{N-1} \leq (b-1)K \\ 0 \leq v_1, \ldots, v_{N-1} \leq (b-1)K \\ \sum_{1 \leq m < N}(u_m - v_m)(b^m + b^{2N-m}) = 0}} \prod_{1 \leq n < N} r(u_n; K,b)r(v_n;K,b).
	$$
	It will be convenient later on to introduce extra variables $u_N,v_N$ to the sum. To this end, we claim that 
	\begin{align*}
	&\sum_{\substack{0\leq u_1, \ldots, u_{N-1} \leq (b-1)K \\ 0 \leq v_1, \ldots, v_{N-1} \leq (b-1)K \\ \sum_{1 \leq m < N}(u_m - v_m)(b^m + b^{2N-m}) = 0}} \prod_{1 \leq n < N} r(u_n; K,b)r(v_n;K,b)\\
	& 
	\leq \sum_{\substack{0\leq u_1, \ldots, u_{N} \leq (b-1)K \\ 0 \leq v_1, \ldots, v_{N} \leq (b-1)K \\ \sum_{1 \leq m \leq N}(u_m - v_m)(b^m + b^{2N-m}) = 0}} \prod_{1 \leq n \leq N} r(u_n; K,b)r(v_n;K,b).
	\end{align*}
	Indeed, any solution $(u_1', \ldots, u_{N-1}') \times (v_1', \ldots, v_{N-1}')$ to the equation 
	$$\sum_{1 \leq m < N} (u_m - v_m)(b^m + b^{2N-m}) = 0$$ yields the solution $(u_1', \ldots, u_{N-1}', 0) \times (v_1', \ldots, v_{N-1}', 0)$ to the equation $$\sum_{1 \leq m \leq N}(u_m-v_m)(b^m + b^{2N - m})=0.$$ Since $r(0;K,b)=1$ additionally, the claim follows.

	For any integer $n$, Lemma \ref{lem:comp2} implies
	$$
	r(n;K,b) \leq  \dfrac{b^K\mathbf{1}_{0 \leq n \leq(b-1)K}}{\sqrt{(b^2-1)K}} \left(\sqrt{\dfrac{6}{\pi}}\exp\left(-\dfrac{6}{(b^2-1)K}\left(n - \dfrac{(b-1)K}{2}\right)^2 \right) + \dfrac{c}{K}\right) 
	$$
	for some absolute constant $c > 0$. We fix a smooth function $\nu : \mathbb{R} \to [0,1]$ compactly supported on $[-2,2]$ satisfying $\nu(t) = 1$ for $0 \leq t \leq 1$ and $\|\nu^{(j)}\|_\infty \ll_j 1$ for each $j\geq 0$. In particular $\mathbf{1}_{0 \leq n \leq (b-1)K} \leq \nu(n/(b-1)K)$ and
	$$
	r(n;K,b) \leq  \dfrac{b^K  \nu(\frac{n}{(b-1)K})}{\sqrt{(b^2-1)K}} \left(\sqrt{\dfrac{6}{\pi}}\exp\left(-\dfrac{6}{(b^2-1)K}\left(n - \dfrac{(b-1)K}{2}\right)^2 \right) + \dfrac{c}{K}\right).
	$$ 
	We can rewrite this as
	$$
	r(n;K,b) \leq  \dfrac{b^K}{\sqrt{(b^2-1)K}} \eta_{K,b} \left(\dfrac{n}{\sqrt{(b^2-1)K}}\right),
	$$
	where $\eta_{K,b} : \mathbb{R} \to \mathbb{R}^+_0$ is the smooth compactly supported function defined by 
	\begin{equation}\label{eqn: def2 of eta}
	\eta_{K,b}(t) = \left(\sqrt{\dfrac{6}{\pi}} \exp\left(-6\left(t - \dfrac{1}{2} \sqrt{\dfrac{(b-1)K}{b+1}}\right)^2\right) + \dfrac{c}{K} \right)\nu\left(t \sqrt{\dfrac{b+1}{(b-1)K}}\right).
	\end{equation}
	Then we have 
	$$
	\sum_{\substack{0\leq u_1, \ldots, u_{N} \leq (b-1)K \\ 0 \leq v_1, \ldots, v_{N} \leq (b-1)K \\ \sum_{1 \leq m \leq N}(u_m - v_m)(b^m + b^{2N-m}) = 0}} \prod_{1 \leq n \leq N} r(u_n; K,b)r(v_n;K,b) \leq \dfrac{b^{2KN}}{(b^2-1)^NK^N}J,
	$$
	where $J$ is the sum
	$$
	\sum_{\substack{ u_1, \ldots, u_{N} \in \mathbb{Z} \\ v_1, \ldots, v_{N} \in \mathbb{Z} \\ \sum_{1 \leq m \leq N}(u_m - v_m)(b^m + b^{2N-m}) = 0}} \prod_{1 \leq n \leq N} \eta_{K,b}\left(\dfrac{u_n}{\sqrt{(b^2-1)K}}\right) \eta_{K,b}\left(\dfrac{v_n}{\sqrt{(b^2-1)K}}\right).
	$$
	In view of (\ref{eqn: bound for sum of squares}) we complete the proof if we show that 
	\begin{equation}\label{eqn: desired bound}
	J \leq b^2 \dfrac{(b^2-1)^N K^N}{b^{2N}} \left(1 + O\left(\dfrac{1}{\sqrt{K}} + \dfrac{b^2}{K}\right)\right)^{2N}. 
	\end{equation}
	To this end we proceed as follows.
	
	An application of the identity 
	$$
	\mathbf{1}_{k=0} = \int_0^1 e(k\alpha) d\alpha
	$$
	valid for integers $k$, gives
	$$
	J = \int_{0}^1 \left|\prod_{1 \leq n \leq N}\sum_{v \in \mathbb{Z}} \eta_{K,b}\left(\dfrac{v}{\sqrt{(b^2-1)K}}\right)e\left(\alpha v \left(b^n + b^{2N-n}\right)\right)\right|^2d\alpha.
	$$
	Let us set
	\begin{equation}\label{def: A}
	A := \sqrt{(b^2-1)K}\|\eta_{K,b}\|_1 + \|\eta_{K,b}'\|_1.
	\end{equation}
	By the symmetry of $n \mapsto b^n + b^{2N-n}$ around $n = N$ and the bound 
	$$
	\left|\sum_{v \in \mathbb{Z}} \eta_{K,b}\left( \dfrac{v}{\sqrt{(b^2-1)K}} \right) e\left(2\alpha v b^N \right) \right| \leq A
	$$
	coming from (\ref{eqn: trivial bound}) in Lemma \ref{lem: smooth exp sums}, note
	$$
	J \leq A \int_0^1 \prod_{1 \leq n < 2N}\left|\sum_{v \in \mathbb{Z}}\eta_{K,b}\left(\dfrac{v}{\sqrt{(b^2-1)K}}\right)e\left(\alpha v\left(b^n + b^{2N-n}\right)\right)\right|d\alpha.
	$$
	Grouping by pairs of adjacent factors and bounding square roots of two endpoint factors, we have 
	\begin{align*}
	&\prod_{1 \leq n < 2N}\left|\sum_{v \in \mathbb{Z}}\eta_{K,b}\left(\dfrac{v}{\sqrt{(b^2-1)K}}\right)e\left(\alpha v\left(b^n + b^{2N-n}\right)\right)\right|\\
	&\leq
	A
	\prod_{1 \leq n \leq 2(N-1)}
	\sqrt{\left|
	\sum_{u \in \mathbb{Z}}\eta_{K,b}\left(\dfrac{u}{\sqrt{(b^2-1)K}}\right)e\left(\alpha u\left(b^n + b^{2N-n}\right)\right)\right|}\\
	&\qquad
	\times
	\sqrt{\left|\sum_{v \in \mathbb{Z}}\eta_{K,b}\left(\dfrac{v}{\sqrt{(b^2-1)K}}\right)e\left(\alpha v\left(b^{n+1} + b^{2N-n-1}\right)\right)
	\right|}.
	\end{align*}
	By Lemma \ref{lem: smooth exp sums} with $k=4$, the product over $1 \leq n \leq 2(N-1)$ is at most
	\begin{align*}
	&\prod_{1 \leq n \leq 2(N-1)} \sqrt{\min\left(A, \dfrac{\|\eta_{K,b}^{(4)}\|_1}{(b^2-1)^{3/2}K^{3/2}\sin^4(\pi \alpha(b^n + b^{2N-n}))}\right)}\\
	&\qquad 
	\times
	\sqrt{\min\left(A, \dfrac{\|\eta_{K,b}^{(4)}\|_1}{(b^2-1)^{3/2}K^{3/2}\sin^4(\pi \alpha(b^{n+1} + b^{2N-n-1}))}\right)}.
	\end{align*}
	By Lemma \ref{lem: bound for product of pair} with $\beta = 2N - n - 1$ and $\gamma = n$, the fact that $t \mapsto \sin(\pi t)$ is increasing on $[0,1/2]$ and $|\sin(\pi t)| = \sin(\pi \|t\|)$, we have either
	$$
	\left|\sin\left(\pi \alpha \left(b^n + b^{2N-n}\right)\right)\right| \geq \sin\left(\pi \dfrac{\|\alpha(b^2-1)b^n\|}{b+1}\right)
	$$
	or
	$$
	\left|\sin\left(\pi \alpha \left(b^{n+1} + b^{2N-n-1}\right)\right)\right| \geq \sin\left(\pi \dfrac{\|\alpha(b^2-1)b^n\|}{b+1}\right).
	$$
	Then the product above is
	\begin{align*}
	&\leq A^{N-1}
	\prod_{1 \leq n \leq 2(N-1)} \sqrt{\min\left(A, \dfrac{\|\eta_{K,b}^{(4)}\|_1}{(b^2-1)^{3/2}K^{3/2}\sin^4(\pi\|\alpha(b^2-1)b^n\|/(b+1))}\right)}\\
	&\leq
	A^{N-1}
	\prod_{1 \leq n \leq 2(N-1)} \sqrt{\min\left(A, \dfrac{(b+1)^4\|\eta_{K,b}^{(4)}\|_1}{(b^2-1)^{3/2}K^{3/2}\sin^4(\pi \alpha(b^2-1)b^n)}\right)}.
	\end{align*}
	Note in the last line we used the inequality $r \sin(\pi t/r) \geq \sin(\pi t)$ valid for $r \geq 1$ and $0 \leq t < 1$ (which may be shown to hold, say, via Euler's product formula). Thus
	\begin{align*}
	&\prod_{1 \leq n < 2N}\left|\sum_{v \in \mathbb{Z}}\eta_{K,b}\left(\dfrac{v}{\sqrt{(b^2-1)K}}\right)e\left(\alpha v\left(b^n + b^{2N-n}\right)\right)\right|\\
	&
	\leq
	A^{N}
	\prod_{1 \leq n \leq 2(N-1)} \sqrt{\min\left(A, \dfrac{(b+1)^4\|\eta_{K,b}^{(4)}\|_1}{(b^2-1)^{3/2}K^{3/2}\sin^4(\pi \alpha(b^2-1)b^n)}\right)}.
	\end{align*}
	Inserting this in the integral, substituting $\alpha b(b^2-1)$ with $\alpha$ and using the $1$-periodicity of the integrand, we obtain
	\begin{align}
	J &\leq A^{N+1}
	\int_0^1 \prod_{0 \leq n < 2(N-1)}\sqrt{\min\left(A, \dfrac{(b+1)^4\|\eta_{K,b}^{(4)}\|_1}{(b^2-1)^{3/2}K^{3/2}\sin^4(\pi \alpha b^n)}\right)}d\alpha. \label{eqn4}
	\end{align}
	By Lemma \ref{lem: integral of product} and the fact $\sqrt{\min(A,B)} = \min(\sqrt{A}, \sqrt{B})$ for $A,B \geq 0$, this integral (without the normalizing factor $A^{N+1}$) is
	$$
	\leq \left(\sup_{\theta \in \mathbb{R}} \dfrac{1}{b} \sum_{n (b)} \min\left(A^{1/2},
	\dfrac{(b+1)^2\|\eta_{K,b}^{(4)}\|_1^{1/2}}{(b^2-1)^{3/4}K^{3/4}\sin^2(\pi (n+\theta)/b)}
	\right)\right)^{2(N-1)}
	$$  
	while Lemma \ref{lem: vinogradov} implies this is
	$$
	\leq \left(\dfrac{A^{1/2}}{b} + \dfrac{b(b+1)^2\|\eta_{K,b}^{(4)}\|_1^{1/2}}{(b^2-1)^{3/4}K^{3/4}} \right)^{2(N-1)}.
	$$
	Factoring $\frac{1}{b}(b^2-1)^{1/4}K^{1/4}$ out from the inside of the brackets and factoring\\ $(b^2-1)^\frac{N+1}{2}K^{\frac{N+1}{2}}$ out from the normalizing factor $A^{N+1}$ in (\ref{eqn4}) it follows
	$$
	J \leq b^2\dfrac{(b^2-1)^N K^N}{b^{2N}}\mathcal{E}(N,K,b),
	$$
	where
	\begin{align*}
	&\mathcal{E}(N,K,b)\\
	&:= \left(\dfrac{A}{\sqrt{(b^2-1)K}}\right)^{N+1}\left(\dfrac{A^{1/2}}{(b^2-1)^{1/4}K^{1/4}} + 
	\dfrac{b^2(b+1)^2\|\eta_{K,b}^{(4)}\|_1^{1/2}}{(b^2-1)K}
	\right)^{2(N-1)}\\
	&\leq
	\left(\dfrac{A}{\sqrt{(b^2-1)K}}\right)^{N}\left(\dfrac{A^{1/2}}{(b^2-1)^{1/4}K^{1/4}} + 
	\dfrac{3b^2\|\eta_{K,b}^{(4)}\|_1^{1/2}}{K}
	\right)^{2N}\\
	&=
	\left(\|\eta_{K,b}\|_1 + \dfrac{\|\eta_{K,b}'\|_1}{\sqrt{(b^2-1)K}}
	+
	\dfrac{3b^2\|\eta_{K,b}^{(4)}\|_1^{1/2}}{K}\sqrt{\|\eta_{K,b}\|_1 + \dfrac{\|\eta_{K,b}'\|_1}{\sqrt{(b^2-1)K}}}
	\right)^{2N}.
	\end{align*}
	The inequality above follows from the facts $B  (\sqrt{B} + C)^{-2} \leq 1$ for any $B> 0$, $C \geq 0$, and $(b+1)^2 \leq 3(b^2-1)$ for $b\geq 2$. 
	The last equality holds by the definition of $A$ in (\ref{def: A}). From the definition of $\eta_{K,b}$ in (\ref{eqn: def2 of eta}) and the assumptions on $\nu$, we have $\|\eta_{K,b}'\|_1, \|\eta_{K,b}^{(4)}\|_1 \ll 1$ uniformly in $K,b$. Moreover the fact $\int_{\mathbb{R}}e^{-t^2}dt = \sqrt{\pi}$ and the assumption $\nu \leq 1$ imply
	$$
	\|\eta_{K,b}\|_1 \leq 1 +  \dfrac{c\|\nu\|_1}{\sqrt{K}} = 1 + O\left(\dfrac{1}{\sqrt{K}}\right). 
	$$
	Hence
	$$
	\mathcal{E}(N,K,b) \leq \left(1 + O\left(\dfrac{1}{\sqrt{K}} + \dfrac{b^2}{K}\right)\right)^{2N}
	$$
	and
	$$
	J \leq b^2 \dfrac{(b^2-1)^N K^N}{b^{2N}} \left(1 + O\left(\dfrac{1}{\sqrt{K}} + \dfrac{b^2}{K}\right)\right)^{2N}.
	$$
	We have shown that (\ref{eqn: desired bound}) holds and the proof is thus complete.
\end{proof}	

\section{Bounding the average}\label{sec: average bound}

Here we prove Proposition \ref{prop: average bound} below combining our moment bounds in Proposition \ref{prop:2K-th moment2} with Col's $L^\infty$-type bound in Proposition \ref{prop: Linfty bound}. As an application of Proposition \ref{prop: average bound}, we then give a proof of Theorem \ref{thm: equidistribution} in the following section. 

\begin{prop}
	[$L^1/L^2/L^{2K}/L^\infty$ hybrid bound]\label{prop: average bound}
	Let $N\geq 0$ be an integer and let $Q\geq 1$. For any $0 < \epsilon \leq \frac{1}{15}$ and $\frac{1}{3} \leq \delta \leq \frac{2}{5} - \epsilon$, 
	\begin{equation}\label{eqn: general L1 bound}
		\sup_{\beta \in \mathbb{R}}\sum_{\substack{q \leq Q \\ (q,b)=1}} \sum_{h(q)}^* \Phi_{N}\left(\dfrac{h}{q} + \beta\right) \ll_{b,\epsilon} Q^2b^{N(1 - \delta - \sigma_1(b,\epsilon))} + Q^{1 - \frac{\sigma_1(b,\epsilon)}{\delta}}b^N,
	\end{equation}
	where $\sigma_1(b,\epsilon) > 0$ is some value depending only on $b$ and $\epsilon$. 
	Moreover 
	\begin{equation}\label{eqn: L1 bound for r dividing b(b^2-1)}
		\sum_{\substack{2 \leq q \leq Q \\ (q,b^3-b)=1}} \sum_{h(q)}^* \Phi_{N}\left(\dfrac{h}{q} + \dfrac{k}{b^3-b}\right) \ll_{b,\epsilon} \left(Q^2b^{N(1 - \delta - \sigma_1(b,\epsilon))} + Q^{1 - \frac{\sigma_1(b,\epsilon)}{\delta}}b^N\right)e^{-\frac{\sigma_\infty(b)N}{\log Q}}
	\end{equation}
uniformly in $k \in \mathbb{Z}$, 
	where $\sigma_\infty(b) > 0$ is some value depending only on $b$.
\end{prop}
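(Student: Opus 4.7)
The plan is to carry out the H\"older-with-triple scheme sketched in the Setup. Fix an integer $M$ with $1 \le M < N/3$ and an integer $K \ge 2$ (both to be optimized later in terms of $b, \epsilon$ and $\delta$), and set $\ell := 4K/(2K-1)$ so that $2/\ell + 1/(2K) = 1$. Split
\begin{equation*}
\Phi_N(\alpha) = P_1(\alpha) P_2(\alpha) P_3(\alpha),
\end{equation*}
where $P_1, P_2, P_3$ are the subproducts of $\phi_b(\alpha(b^n + b^{2N-n}))$ over the index ranges $1 \le n \le M$, $M < n \le 2M$ and $2M < n < N$ respectively. H\"older's inequality then yields
\begin{equation*}
\sum_{\substack{q \le Q \\ (q,b)=1}}\sum_{h(q)}^* \Phi_N\!\left(\frac{h}{q} + \beta\right) \le T_1^{1/\ell}\, T_2^{1/\ell}\, T_3^{1/(2K)},
\end{equation*}
where $T_i := \sum_{q,h}^{*} P_i(h/q + \beta)^{e_i}$ with $e_1 = e_2 = \ell$ and $e_3 = 2K$. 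The reason for raising the tail $P_3$ (rather than $P_1$ or $P_2$) to the $2K$-th power is the algebraic identity $P_3(\alpha)^{2K} = \Phi_{N-2M}(\alpha b^{2M})^{2K}$, obtained by the shift $n \mapsto n + 2M$; this reduces $T_3$ to an average of $\Phi_{N-2M}^{2K}$ at the shifted argument $\beta b^{2M}$.

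Proposition~\ref{prop:2K-th moment2} (applied with $N$ replaced by $N - 2M$ and $\beta$ replaced by $\beta b^{2M}$) then delivers
\begin{equation*}
T_3 \ll_b \bigl(Q^2 + K b^{2(N-2M)}\bigr)\, b^{2(K-1)(N-2M)+2}\,\bigl(1 + O(K^{-1/2} + b^2 K^{-1})\bigr)^{2(N-2M)}.
\end{equation*}
For $T_1$ and $T_2$ I would interpolate using the pointwise bound $P_i \le b^M$: writing $P_i^{\ell} \le b^{M(\ell - 2)} P_i^2$ and invoking the direct $L^2$-type estimate (Lemma~\ref{lem: L2 bound}, obtained by expanding the square, interchanging orders of summation and exploiting orthogonality of additive characters modulo $q$), one arrives at
\begin{equation*}
T_i \ll_b b^{M(\ell - 2)} \bigl(Q^2 b^M + Q b^{2M + o(N)}\bigr) \qquad (i = 1, 2),
\end{equation*}
where the first term is the diagonal contribution (tight by Parseval, using that the support vectors $b^n + b^{2N-n}$ for $1 \le n \le M$ are linearly independent over integers of absolute value less than $b$ when $M < N$), and the second term is a divisor-type bound for the off-diagonal.

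Combining the three bounds and writing $M = \lfloor \delta N \rfloor$, the outcome splits into two regimes. In the ``large-$Q$'' regime, in which the $Q^2$ pieces dominate throughout, the bound takes the form $Q^2 b^{N(1-\delta) + N(5\delta - 2)/(2K) + O(N/K)}$; the coefficient $(5\delta - 2)/(2K)$ is strictly negative under the hypothesis $\delta \le 2/5 - \epsilon$, and for $K = K(b, \epsilon)$ chosen large enough to render the $O(b^2/K)$-correction in Proposition~\ref{prop:2K-th moment2} harmless, this produces the first term $Q^2 b^{N(1-\delta-\sigma_1)}$ with some $\sigma_1 = \sigma_1(b, \epsilon) > 0$. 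In the complementary regime, in which the $K b^{2(N-2M)}$ piece of $T_3$ wins, a parallel computation produces the second term $Q^{1-\sigma_1/\delta} b^N$. For the inequality (\ref{eqn: L1 bound for r dividing b(b^2-1)}), where $\beta = k/(b^3 - b)$ and $(q, b^3 - b) = 1$, I would factor off one additional short subproduct of $\Phi_N$ before the H\"older split and bound it uniformly via Col's pointwise estimate (Proposition~\ref{prop: Linfty bound}), contributing the advertised extra factor $\exp(-\sigma_\infty(b) N/\log Q)$ uniformly in $h$ and $k$.

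The hard part will be the joint optimization of $M$ and $K$: Proposition~\ref{prop:2K-th moment2} is only useful once $K \gg_b 1$ is large enough that $(1 + O(b^2/K))^{2N}$ is absorbed into $b^{\epsilon N/2}$, while the $L^2$-to-$L^\ell$ interpolation on $P_1, P_2$ demands $\ell$ close to $2$, which again forces $K$ large; taking $M$ larger helps $T_3$ but hurts $T_1, T_2$. Verifying that a strictly positive $\sigma_1(b, \epsilon)$ survives after balancing these competing constraints, uniformly across the entire range $\tfrac{1}{3} \le \delta \le \tfrac{2}{5} - \epsilon$, and in particular handling separately the two subregimes $Q \gg b^{(2/5 - \epsilon_0)N}$ and $Q \ll b^{(2/5 - \epsilon_0)N}$ noted in the Setup, is the technical heart of the argument.
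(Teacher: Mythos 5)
Your treatment of the regime $Q\gg_{b,\epsilon}b^{\delta N}$ is essentially the paper's: the $(\ell,\ell,2K)$ H\"older split with $M=\lfloor\delta N\rfloor$, the shift identity $P_3^{2K}=\Phi_{N-2M}^{2K}(\cdot\,b^{2M})$ fed into Proposition \ref{prop:2K-th moment2}, and $P_1,P_2$ via $P_i^{\ell}\le b^{(\ell-2)M}P_i^{2}$ together with Lemma \ref{lem: L2 bound}, with $K\asymp_{b,\epsilon}1$ large and the divisor-bound exponent small. The genuine gap is in the complementary regime. Your claim that, when the $Kb^{2(N-2M)}$ piece of the large-sieve bound dominates, ``a parallel computation produces the second term $Q^{1-\sigma_1/\delta}b^{N}$'' does not hold with $M=\lfloor\delta N\rfloor$: carrying the same computation out for small $Q$ one finds roughly $Q^{1-\frac{1}{2K}}\,b^{N+2\gamma N+O_b(NK^{-3/2})}$, i.e.\ a gain over $Qb^{N}$ of only a small fixed power of $Q$, against losses (the $b^{\gamma N}$ from the divisor bound in Lemma \ref{lem: L2 bound} and the correction in Proposition \ref{prop:2K-th moment2}) which are a fixed positive power of $b^{N}$ once $K$ and $\gamma$ are fixed in terms of $b,\epsilon$ (as they must be to produce a fixed $\sigma_1(b,\epsilon)$). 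For $Q=b^{o(N)}$ --- e.g.\ $Q=\exp(c\sqrt{\log x})$, precisely the range that matters in Section \ref{sec: equi estimate} --- the loss overwhelms the gain, so the second term cannot be extracted this way. The paper's mechanism is different and is the missing idea: when $Q\ll_{b,\epsilon}b^{\delta N}$ one chooses $M$ \emph{adaptively} with $b^{\delta(N-M)}\asymp_{b,\epsilon}Q$, peels off the subproduct over $1\le n\le M$ (bounded trivially by $b^{M}$ for (\ref{eqn: general L1 bound})), rewrites the tail as $\Phi_{N-M}(h/q+b^{M}\beta)$, and applies the already-proven large-$Q$ case at the critical scale $Q\asymp b^{\delta(N-M)}$, which gives $Q^{1-\sigma_1/\delta}b^{N-M}$ and hence $Q^{1-\sigma_1/\delta}b^{N}$; the proof is thus a bootstrap with a $Q$-dependent cut, not a second reading of the same H\"older estimate.

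The same issue affects your route to the factor $e^{-\sigma_\infty(b)N/\log Q}$ in (\ref{eqn: L1 bound for r dividing b(b^2-1)}): peeling ``one additional short subproduct'' and applying Proposition \ref{prop: Linfty bound} cannot produce it, since Col's bound on a subproduct of length $M'$ only yields $e^{-\sigma_\infty M'/\log Q}$, so one needs $M'\gg N$; and peeling a segment of length $\asymp N$ before the H\"older step in the large-$Q$ regime would forfeit the $b^{-\delta N}$-type saving on that segment, destroying (\ref{eqn: general L1 bound}). In the paper the exponential factor costs nothing when $Q\gg_{b,\epsilon}b^{\delta N}$, because then $\log Q\gg_{b}N$ (here $\delta\ge\frac13$ is used) and $e^{-\sigma_\infty N/\log Q}\asymp_{b,\epsilon}1$; in the small-$Q$ regime it is exactly Col's bound applied to the adaptively peeled segment, whose length $M=N-\frac1\delta\log_b Q+O_{b,\epsilon}(1)$ satisfies $N-M\ll_{b,\epsilon}\log Q$, so that $e^{-\sigma_\infty M/\log Q}\ll_{b,\epsilon}e^{-\sigma_\infty N/\log Q}$. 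Two smaller points: your opening restriction $M<N/3$ conflicts with $M=\lfloor\delta N\rfloor$ for $\delta>\frac13$ (all you need is $2M<N$); and absorbing the moment correction ``into $b^{\epsilon N/2}$'' is too weak a target, since the total saving is only $b^{-c\epsilon N/K}$ --- what rescues the argument is that the correction enters the same $N/K$-scaled exponent with an extra factor $O_b(K^{-1/2})$, so it is beaten by taking $K=K(b,\epsilon)$ large.
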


We will need the following bound. 

\begin{lem}[$L^2$-bound]\label{lem: L2 bound}
	Let $0 \leq L < M < N$ be integers and let $Q \geq 1$. Then
	$$
	\sup_{\beta \in \mathbb{R}}\sum_{q\leq Q} \sum_{h(q)} \prod_{L < n \leq M} \phi_b^2\left(\left(\dfrac{h}{q} + \beta\right)\left(b^n + b^{2N-n}\right)\right) \ll_\epsilon \left(Q + b^{M - L + \epsilon N}\right)Qb^{M-L}
	$$	
	for any $\epsilon > 0$.
\end{lem}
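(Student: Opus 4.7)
The plan is to expand $\phi_b^2$ via its Fourier representation and then appeal to orthogonality of the additive characters modulo $q$. Writing $\phi_b^2(\alpha) = \left|\sum_{0\leq m<b} e(\alpha m)\right|^2$ and collapsing the double sum according to the difference $c = m_1 - m_2$, one gets
\[
\phi_b^2(\alpha) = \sum_{|c|<b}(b-|c|)\,e(\alpha c).
\]
Distributing this over the product $L<n\leq M$ and setting $\alpha=h/q+\beta$, the whole product equals
\[
\sum_{(c_n)}\prod_{L<n\leq M}(b-|c_n|)\,e\!\left((h/q+\beta) W\right),
\]
where $(c_n)=(c_{L+1},\dots,c_M)\in\{-(b-1),\dots,b-1\}^{M-L}$ and $W=\sum_{L<n\leq M}c_n(b^n+b^{2N-n})\in\mathbb{Z}$. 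Summing over $h\pmod q$ and $q\leq Q$, using $\sum_{h(q)}e_q(hW)=q\mathbf{1}_{q\mid W}$, and bounding $|e(\beta W)|=1$, produces the $\beta$-independent majorant
\[
\sum_{q\leq Q}\sum_{h(q)}\prod_{L<n\leq M}\phi_b^2(\cdots)\ \leq\ \sum_{(c_n)}\prod_n(b-|c_n|)\sum_{\substack{q\leq Q\\ q\mid W}}q.
\]

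Next, I would split into the diagonal ($W=0$) and off-diagonal ($W\neq 0$) contributions. For the diagonal, I claim the strict inequality $M<N$ forces $c_n=0$ for every $n$. Writing $W = b^{L+1}B + b^{2N-M}A$ with $A=\sum_n c_nb^{M-n}$ and $B=\sum_n c_nb^{n-L-1}$, both of absolute value $<b^{M-L}$, the vanishing of $W$ gives $B=-b^{2N-M-L-1}A$. If $A\neq 0$ were an option, then $|B|\geq b^{2N-M-L-1}\geq b^{M-L}$ (since $M\leq N-1$), contradicting $|B|<b^{M-L}$. Hence $A=B=0$, and uniqueness of signed base-$b$ representations of $0$ with digits $|c|<b$ (proved inductively by reducing mod $b$) forces $c_n=0$ throughout. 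The diagonal contribution is therefore
\[
\prod_n b\cdot \sum_{q\leq Q}q\ \ll\ Q^2b^{M-L}.
\]

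For the off-diagonal contribution ($W\neq 0$), I would use $\sum_{q\leq Q,\,q\mid W}q \leq Q\tau(|W|)$ combined with the divisor bound $\tau(|W|)\ll_\epsilon |W|^\epsilon$. Since $|W|\leq(b-1)\sum_n(b^n+b^{2N-n})\ll_b b^{2N}$, this yields $\tau(|W|)\ll_\epsilon b^{\epsilon N}$ after rescaling $\epsilon$. Together with the elementary identity $\sum_{|c|<b}(b-|c|)=b^2$, which gives $\sum_{(c_n)}\prod_n(b-|c_n|)=b^{2(M-L)}$, the off-diagonal contribution is $\ll Qb^{2(M-L)+\epsilon N}$. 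Adding the two pieces yields
\[
\sup_\beta\sum_{q\leq Q}\sum_{h(q)}\prod_{L<n\leq M}\phi_b^2(\cdots)\ \ll_\epsilon\ Q^2b^{M-L}+Qb^{2(M-L)+\epsilon N}\ =\ (Q+b^{M-L+\epsilon N})Qb^{M-L},
\]
which is exactly the claimed bound. The one delicate point is the diagonal uniqueness, whose proof relies crucially on $M<N$ to prevent the ``low'' and ``high'' halves $b^n$ and $b^{2N-n}$ from interfering; the rest is routine bookkeeping with the divisor function.
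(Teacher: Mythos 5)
Your proof is correct and follows essentially the same route as the paper: expand the square (your Fej\'er-weight form $\sum_{|c|<b}(b-|c|)e(\alpha c)$ is just the paper's double sum over digit pairs grouped by differences), apply orthogonality of additive characters modulo $q$, and split into the diagonal case $W=0$ (giving $Q^2b^{M-L}$) and the off-diagonal case handled by the divisor bound (giving $Qb^{2(M-L)+\epsilon N}$). Your careful verification that $W=0$ forces all $c_n=0$ is simply a spelled-out version of the paper's appeal to uniqueness of base-$b$ representations, using $M<N$ so that the exponents $n$ and $2N-n$ never collide.
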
	

\begin{proof}
	Expanding the square, switching orders of summation, using the orthogonality of the additive characters modulo $q$ and taking absolute values, we have that the left hand side above is
	$$
	\leq Q\sum_{\substack{0 \leq u_{L+1}, \ldots, u_{M} < b \\ 0\leq v_{L+1}, \ldots, v_{M} < b}} \sum_{\substack{q \leq Q \\ q \mid S(\mathbf{u}, \mathbf{v})}}1,
	$$
	where
	$$
	S(\mathbf{u}, \mathbf{v}) := \sum_{L < n \leq M} \left(u_n - v_n\right)\left(b^n + b^{2N-n}\right). 
	$$
	By the uniqueness of the representation of integers in base $b$, we have $S(\mathbf{u}, \mathbf{v}) =0$ if and only if $u_n = v_n$ for each $L < n\leq M$. The contribution to the overall sum of such (diagonal) terms is $\leq Q^2b^{M-L}$. For the off-diagonal terms with $u_n \neq v_n$ for some $n$, the inner sum is $\leq \tau(S(\mathbf{u}, \mathbf{v})) \ll_\epsilon b^{\epsilon N}$ for any $\epsilon > 0$, where $\tau(m) := \sum_{d \mid m }1$ is the divisor function. Thus these contribute $\ll_\epsilon Q b^{2(M-L) + \epsilon N}$ for any $\epsilon > 0$. 
\end{proof}

\begin{proof}[{\bf Proof of Proposition \ref{prop: average bound}}]
	We may assume that $N\geq 100$, say, as otherwise the statement is trivial. 
	Let us treat the cases when $Q \ll_{b,\epsilon} b^{\delta N}$ and $Q \gg_{b,\epsilon} b^{\delta N}$ separately. 
	
	We begin by considering the case when $Q \gg_{b,\epsilon} b^{\delta N}$. Here one can check that both right hand sides of (\ref{eqn: general L1 bound}) and (\ref{eqn: L1 bound for r dividing b(b^2-1)}) are $\gg_{b,\epsilon} Q^2b^{N(1-\delta - \sigma_1(b,\epsilon))}$ (in particular the exponential factor in (\ref{eqn: L1 bound for r dividing b(b^2-1)}) is $\asymp_{b,\epsilon} 1$ since $\delta \geq 1/3$ by assumption). Thus to prove (\ref{eqn: general L1 bound}) and (\ref{eqn: L1 bound for r dividing b(b^2-1)}) when $Q \gg_{b,\epsilon} b^{\delta N}$ it suffices to show that the left hand side of (\ref{eqn: general L1 bound}) is $\ll_{b,\epsilon} Q^2b^{N(1-\delta - \sigma_1(b,\epsilon))}$ in this case. To this end we proceed as follows.
	
	Let $M = \lfloor \delta N \rfloor$ and split the product as  
	$$
	\Phi_N\left(\dfrac{h}{q} + \beta\right) = P_1\left(\dfrac{h}{q}\right)P_2\left(\dfrac{h}{q}\right)P_3\left(\dfrac{h}{q}\right),
	$$	
	where
	\begin{align*}
		P_1\left(\dfrac{h}{q}\right) 
		&=
		\prod_{1 \leq n \leq M} \phi_b\left(\left(\dfrac{h}{q} + \beta\right)\left(b^n + b^{2N-n}\right)\right)
		,\\
		P_2\left(\dfrac{h}{q}\right) 
		&=
		\prod_{M < n \leq 2M}\phi_b\left(\left(\dfrac{h}{q} + \beta\right)\left(b^n + b^{2N-n}\right)\right)
		,\\
		P_3\left(\dfrac{h}{q}\right) 
		&=
		\prod_{2M < n < N}
		\phi_b\left(\left(\dfrac{h}{q} + \beta\right)\left(b^n + b^{2N-n}\right)\right)
		.
	\end{align*}
	Let $K \geq 2$ be an integer to be specified later and 
	set $\ell := 4K/(2K-1) > 2$. Note that $2/\ell + 1/2K= 1$. Then by  H\"{o}lder's inequality with the triple $(\ell,\ell,2K)$, we have
	\begin{align*}
	&\sum_{\substack{q \leq Q \\ (q,b)=1}}\sum_{h(q)}^* \Phi_N\left(\dfrac{h}{q} + \beta\right)\\ 
		& \leq 
		\left(\sum_{\substack{q \leq Q }}\sum_{h(q)}P_1^\ell\left(\dfrac{h}{q}\right) \right)^{1/\ell}\left(\sum_{\substack{q \leq Q }}\sum_{h(q)}P_2^\ell\left(\dfrac{h}{q}\right)\right)^{1/\ell}\left(\sum_{\substack{q \leq Q \\ (q,b)=1}}\sum_{h(q)}^* P_3^{2K}\left(\dfrac{h}{q}\right) \right)^{1/2K}.
	\end{align*}
	Note $P_1^\ell(h/q) = P_1^{\ell-2}(h/q) P_1^{2}(h/q) \leq b^{(\ell-2)M} P_1^{2}(h/q)$ while Lemma \ref{lem: L2 bound} gives
	$$
	\sum_{q \leq Q}\sum_{h(q)} P_1^2\left(\dfrac{h}{q}\right) \ll_\gamma \left(Q + b^{M  }\right)Qb^{M + \gamma N}
	$$
	for any $\gamma > 0$.
	Thus
	$$
	\left(\sum_{\substack{q \leq Q }}\sum_{h(q)}P_1^\ell\left(\dfrac{h}{q}\right) \right)^{1/\ell} \ll_\gamma \left(Q + b^{M }\right)^{1/\ell}Q^{1/\ell}b^{M(1 - 1/\ell) + \gamma N}
	$$
	for any $\gamma > 0$. 
	Similarly we can show that 
	$$
	\left(\sum_{\substack{q \leq Q }}\sum_{h(q)}P_2^\ell\left(\dfrac{h}{q}\right) \right)^{1/\ell} \ll_\gamma \left(Q + b^{M }\right)^{1/\ell}Q^{1/\ell}b^{M(1 - 1/\ell) + \gamma N}.
	$$
	
	Consider now the third sum with $P_3$. We have
	\begin{align*}
		\sum_{h(q)}^*P_3^{2K}\left(\dfrac{h}{q}\right) &= \sum_{h(q)}^* \prod_{1 \leq n < N-2M} \phi_b^{2K} \left( \left(\dfrac{h}{q} + \beta \right)\left(b^{2M + n} + b^{2N - 2M - n}\right)\right)\\
		&=
		\sum_{h(q)}^* \prod_{1 \leq n < N-2M} \phi_b^{2K} \left( \left(\dfrac{hb^{2M}}{q} + \beta b^{2M}\right) \left(b^{ n} + b^{2(N - 2M) - n}\right)\right)\\
		&= 
		\sum_{h(q)}^*\Phi_{N-2M}^{2K}\left(\dfrac{hb^{2M}}{q} + \beta b^{2M}\right) .
	\end{align*}
	If $(q,b)=1$, we may substitute $hb^{2M}$ with $h$ above. Thus
	$$
	\sum_{\substack{q \leq Q \\ (q,b)=1}}\sum_{h(q)}^*P_3^{2K}\left(\dfrac{h}{q}\right)
	= 
	\sum_{\substack{q \leq Q \\ (q,b)=1}}\sum_{h(q)}^*\Phi_{N-2M}^{2K}\left(\dfrac{h}{q} + \beta b^{2M}\right).
	$$	
	Removing now the constraint $(q,b)=1$ via positivity, Proposition \ref{prop:2K-th moment2} implies the above is
	$$
	\leq \left(Q + \sqrt{K}b^{N - 2M}\right)^2b^{2(K-1)(N - 2M) + 2} \left(1 + \dfrac{c}{\sqrt{K}} + \dfrac{cb^2}{K}\right)^{2(N-2M)}
	$$
	for some absolute constant $c > 0$.
	Multiplying the three bounds together and recalling the assumptions $M = \lfloor \delta N \rfloor$ and $2/\ell + 1/2K=1$ gives
	\begin{align}
		&\sum_{\substack{q \leq Q \\ (q,b)=1}}\sum_{h(q)}^* \Phi_N\left(\dfrac{h}{q} + \beta\right)\nonumber\\
		& 
		\ll_{b, \gamma} \left(Q + b^{M }\right)^{\frac{2}{\ell}}\left(Q + b^{N- 2M}\right)^{\frac{1}{K}}Q^{\frac{2}{\ell}}b^{N - \frac{N}{K} - \frac{2M}{\ell} + \frac{2M}{K} + \gamma N}\left(1 + \dfrac{cb^2}{K} + \dfrac{c}{\sqrt{K}}\right)^{\frac{N-2M}{K}}\nonumber\\
		&\ll_b \left(Q + b^{\delta N}\right)^{1-1/2K}\left(Q + b^{N(1-2\delta)}\right)^{1/K}Q^{1-1/2K} b^{N(1 - \delta - \frac{1}{K}L(\delta, b,K,\gamma))}\label{eqn: last line},
	\end{align}
	where
	\begin{align*}
		L(\delta,b,K,\gamma) &:= 1-\dfrac{5\delta}{2} - \gamma K - (1-2\delta)\log_b\left(1 + \dfrac{cb^2}{K} + \dfrac{c}{\sqrt{K}}\right)\\
		&\geq \dfrac{5\epsilon}{2}
		- \gamma K - \dfrac{1}{3}\log_b\left(1 + \dfrac{cb^2}{K} + \dfrac{c}{\sqrt{K}}\right).
	\end{align*}
	The last line holds by the assumption $\frac{1}{3} \leq \delta \leq \frac{2}{5} - \epsilon$. We may choose $K \asymp_{b,\epsilon} 1$ large enough and $\gamma \ll_{b,\epsilon} 1$ small enough so that 
	$$
	\gamma K + \dfrac{1}{3}\log_b\left(1 + \dfrac{cb^2}{K} + \dfrac{c}{\sqrt{K}}\right) \leq \dfrac{3\epsilon}{2},
	$$
	say. For any such choice, $L(\delta,b,K,\gamma) \geq \epsilon$. This and the  assumption $Q \gg_{b,\epsilon} b^{\delta N} \geq b^{(1-2\delta)N}$ for $\delta \geq 1/3$ implies (\ref{eqn: last line}) is
	$
	\ll_{b,\epsilon} Q^2 b^{N(1 - \delta - \epsilon/K)}.
	$
	Thus if we take $\sigma_1(b,\epsilon) = \epsilon/K$ we see that (\ref{eqn: general L1 bound}) and hence (\ref{eqn: L1 bound for r dividing b(b^2-1)}) hold in our case of $Q \gg_{b,\epsilon} b^{\delta N}$.
	
	Next we treat the case when $Q\ll_{b,\epsilon} b^{\delta N}$. Here one observes that the right hand sides of (\ref{eqn: general L1 bound}) and (\ref{eqn: L1 bound for r dividing b(b^2-1)}) are $\gg_{b,\epsilon} b^N Q^{1 - \frac{\sigma_1(b,\epsilon)}{\delta}}$ and $\gg_{b,\epsilon} b^N Q^{1 - \frac{\sigma_1(b,\epsilon)}{\delta}}e^{-\sigma_\infty(b)N/\log Q}$, respectively. It thus suffices to show that the left hand sides of (\ref{eqn: general L1 bound}) and (\ref{eqn: L1 bound for r dividing b(b^2-1)}) are $\ll_{b,\epsilon} b^N Q^{1 - \frac{\sigma_1(b,\epsilon)}{\delta}}$ and $\ll_{b,\epsilon} b^N Q^{1 - \frac{\sigma_1(b,\epsilon)}{\delta}}e^{-\sigma_\infty(b)N/\log Q}$, respectively. We treat these two in similar fashion (but with a small difference) as follows.
	
	Since $Q \ll_{b,\epsilon} b^{\delta N}$ by assumption, we can find an integer $1 \leq M < N-1$ such that $b^{\delta(N-M)} \asymp_{b,\epsilon} Q$. For any such choice of $M$, we split the products according to whether $n \leq M$ or $M < n< N$. This yields that the left hand sides of (\ref{eqn: general L1 bound})
	and (\ref{eqn: L1 bound for r dividing b(b^2-1)}) are
	$
	\leq b^{M}
	\mathscr{Z}
	$ (with $\mathscr{Z}$ defined below in (\ref{def: definition of Z})) and
	$$
	\leq \mathscr{Z}\max_{\substack{h \in \mathbb{Z} \\ 2 \leq q \leq Q \\ (q,h(b^3-b))=1}}\prod_{1 \leq n \leq M} \phi_b\left(\left(\dfrac{h}{q} + \dfrac{k}{b^3-b}\right)\left(b^n + b^{2N-n}\right)\right)
	$$
	respectively, 
	where
	\begin{equation}\label{def: definition of Z}
	\mathscr{Z} := \sup_{\beta \in \mathbb{R}}\sum_{\substack{q \leq Q \\ (q,b)=1}} \sum_{h(q)}^* \prod_{M < n < N} \phi_b\left(\left(\dfrac{h}{q} + \beta \right)\left(b^n + b^{2N-n}\right)\right).
	\end{equation}
	With regards to the product over $1 \leq n \leq M$ above, Proposition \ref{prop: Linfty bound} implies it is
	$$
	\ll_b b^M \exp\left(-\sigma_\infty(b) \dfrac{M}{\log Q}\right) \ll_{b,\epsilon} b^M \exp\left(-\sigma_\infty(b) \dfrac{N}{\log Q}\right).
	$$ 
	Note we used the assumption $b^{\delta(N-M)} \asymp_{b,\epsilon} Q$ implying $M = N - \frac{1}{\delta} \log_b Q + O_{b,\epsilon}(1)$. 
	Similarly as done previously, one can show that 
	$$
	\sum_{\substack{q \leq Q \\ (q,b)=1}} \sum_{h(q)}^* \prod_{M < n < N} \phi_b\left(\left(\dfrac{h}{q} + \beta \right)\left(b^n + b^{2N-n}\right)\right) = \sum_{\substack{q \leq Q \\ (q,b)=1}} \sum_{h(q)}^* \Phi_{N-M}\left(\dfrac{h}{q} + b^M \beta \right).
	$$
	Recall that $ b^{\delta(N-M)}\asymp_{b,\epsilon} Q $ by assumption. Since we already showed above that (\ref{eqn: general L1 bound}) holds for arbitrary $N \geq 1$ when $Q \gg_{b,\epsilon } b^{\delta N}$ (and in particular when $Q \asymp_{b,\epsilon} b^{\delta N}$) we may apply this to our case of $Q \asymp_{b,\epsilon} b^{\delta(N-M)}$ (substituting $N$ there with $N-M$ here). This shows 
	$$
	\mathscr{Z} \ll_{b,\epsilon} Q^2 b^{( N -M)(1 - \delta - \sigma_1(b,\epsilon))} + Q^{1-\frac{\sigma_1(b,\epsilon)}{\delta}} b^{N-M} \asymp_{b,\epsilon} Q^{1-\frac{\sigma_1(b,\epsilon)}{\delta}} b^{N-M}.
	$$
	It follows that (\ref{eqn: general L1 bound}) and (\ref{eqn: L1 bound for r dividing b(b^2-1)}) also hold when $Q \ll_{b,\epsilon} b^{\delta N}$ and we thus conclude the proof. 
\end{proof}	

\section{Equidistribution estimate}\label{sec: equi estimate}

We now prove Theorem \ref{thm: equidistribution} as an application of Proposition \ref{prop: average bound}. 
First we need the following fact. 

\begin{lem}\label{lem: number of coprimes}
	For an integer $N\geq 0$, let $\Pi_b(2N)$ be as defined in (\ref{eqn: def of Pi}) and let 
	$$
	\Pi_b^*(2N) := \left\{ n \in \Pi_b(2N) \ : \ (n,b^3-b)=1\right\}. 
	$$
	We have $\#\Pi_b(2N) = (b-1)b^N$ and 
	\begin{equation}\label{bound: size of Pi star}
	\#\Pi_b^*(2N) = \gamma_2(b) \dfrac{\varphi(b^3-b)}{b^3-b} \#\Pi_b(2N) + O\left(b^2 \tau(b^2-1)\right), 
	\end{equation}
	where
	$$
	\gamma_2(b) := \begin{cases}
		b/(b-1) &\mbox{ if $b$ is even,} \\
		1 &\mbox{ otherwise.}
	\end{cases}
	$$
	Moreover $\#\mathscr{P}_b^*(x) \asymp_b \sqrt{x}$ for $x \geq 1$. 
\end{lem}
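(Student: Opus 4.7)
The plan is to prove the three claims in sequence. The count $\#\Pi_b(2N) = (b-1)b^N$ is purely combinatorial: every $n \in \Pi_b(2N) = \mathscr{P}_b \cap [b^{2N}, b^{2N+1})$ has a unique representation
\[
n = y_N b^N + \sum_{j=0}^{N-1} y_j\bigl(b^j + b^{2N-j}\bigr)
\]
with leading digit $y_0 \in \{1, \ldots, b-1\}$ and free digits $y_1, \ldots, y_N \in \{0, \ldots, b-1\}$, giving $(b-1) b^N$ palindromes in total.

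For $\#\Pi_b^*(2N)$, I would use $\gcd(b, b^2-1) = 1$ to decouple the condition $(n, b^3-b) = 1$ into $(n, b) = 1$ (equivalent to $(y_0, b) = 1$, since $n \equiv y_0 \pmod b$; this holds for exactly $\varphi(b)$ of the admissible $y_0$) together with $(n, b^2-1) = 1$. I would then apply Möbius inversion to the latter followed by orthogonality: for each squarefree $e \mid b^2-1$,
\[
\#\{(y_1, \ldots, y_N) : e \mid n(y)\} = \frac{1}{e}\sum_{h=0}^{e-1}\sum_{(y_j)} e_e\bigl(h\, n(y)\bigr),
\]
a sum that factorises over the digits. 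Since $b^2 \equiv 1 \pmod e$, one has $b^j + b^{2N-j} \equiv 2b^{j \bmod 2}$, $b^N \equiv b^{N \bmod 2}$, and $1 + b^{2N} \equiv 2 \pmod e$, so each inner geometric sum collapses to $(e_e(ab)-1)/(e_e(a)-1)$ with $(a,e)=1$; for primes dividing $b\pm 1$ this has modulus one. Summing the $h=0$ contributions over $y_0$ and $e$ via Möbius produces the main density, while the structured $h \neq 0$ contributions at $p = 2$ carry the $\gamma_2(b)$ correction.

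The $\gamma_2(b)$ factor arises from the placement of the parity condition at $p = 2$. For $b$ even, $2 \mid b$, so the condition falls on $y_0$: the density of $y_0 \in \{1, \ldots, b-1\}$ coprime to $b$ is $\tfrac{b}{b-1}\prod_{p \mid b}(1 - 1/p)$, giving the extra factor $\gamma_2(b) = b/(b-1)$. For $b$ odd, $2 \mid b^2-1$, and the condition becomes ``$y_N$ is odd'', which has density $(b-1)/(2b)$ in $\{0, \ldots, b-1\}$ rather than $1/2$; this $(b-1)/b$ exactly cancels the $b/(b-1)$ coming from the $y_0$-slot, yielding $\gamma_2(b) = 1$. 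The error $O(b^2 \tau(b^2-1))$ comes from bounding the remaining non-trivial character sums: uniformly in $N$, each squarefree $e$ contributes $O(b)$ after summing over $h$, and one sums over the $\varphi(b) = O(b)$ admissible $y_0$ and the $\tau(\mathrm{rad}(b^2-1)) \leq \tau(b^2-1)$ squarefree divisors of $b^2-1$.

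Finally, $\#\mathscr{P}_b^*(x) \asymp_b \sqrt x$ follows from $\mathscr{P}_b^* = \bigcup_{N \geq 0}\Pi_b^*(2N)$ and \eqref{bound: size of Pi star}: once $N$ is large enough that the main term dominates the error, $\#\Pi_b^*(2N) \asymp_b b^N$, and summing the geometric series over $2N \leq \log_b x$ gives $\#\mathscr{P}_b^*(x) \asymp_b b^{\lfloor \log_b x / 2 \rfloor} \asymp_b \sqrt x$. The main obstacle will be correctly identifying the $\gamma_2(b)$ factor: a naive Möbius argument only produces the density $\varphi(b^3-b)/(b^3-b)$, and surfacing the $\gamma_2(b)$ correction requires carefully tracking how the boundary constraint $y_0 \neq 0$ couples with the non-trivial $p = 2$ character contributions in each parity of $b$.
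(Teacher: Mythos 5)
Your count of $\#\Pi_b(2N)$, your $\gamma_2(b)$ bookkeeping (the density $\varphi(b)/(b-1)$ on the leading digit for $b$ even; the condition ``$n$ odd $\iff y_N$ odd'' of density $(b-1)/2b$ for $b$ odd), and your deduction of $\#\mathscr{P}_b^*(x) \asymp_b \sqrt{x}$ from \eqref{bound: size of Pi star} are all correct and consistent with what the paper's computation delivers. The middle step, however, is where you diverge from the paper and where there is a genuine gap. After M\"obius inversion you must treat \emph{every} squarefree $e \mid b^2-1$, not only primes, and your assertion that each inner geometric sum has modulus one fails for composite $e$: there $b$ is merely \emph{some} square root of $1$ modulo $e$, not $\pm 1$, and $\left|\sum_{0 \leq m < b} e_e(am)\right|$ can be of size comparable to $b$ (for instance $b=4$, $e=15$, $a=2$ gives modulus about $2.4$). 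Since such a factor occurs for roughly half of the $N-1$ middle digits, without further input the $h \neq 0$ terms could a priori grow exponentially in $N$, so your claim that each $e$ contributes $O(b)$ uniformly in $N$ is unjustified as written. The missing ingredient is a pairing of consecutive parities: writing $G_e(a) := \sum_{0 \leq m < b} e_e(am)$, the congruence $b^2 \equiv 1 \ (e)$ gives $|G_e(a)|\,|G_e(ab)| = 1$, so the product of your geometric factors over $1 \leq j \leq N-1$ is $O(b)$ uniformly in $N$; with that observation your argument closes, up to possibly a worse power of $b$ in the error term than the stated $O\left(b^2\tau(b^2-1)\right)$, which is harmless for the paper's application since $b$ is fixed there.

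For comparison, the paper avoids characters altogether: from $n = n_Nb^N + 2\sum_{j<N}n_jb^j + \sum_{j<N}n_j\left(b^{2N-j}-b^j\right)$ and the divisibility $(b^2-1) \mid \left(b^{2N-j}-b^j\right)$, the condition $(n,b^2-1)=1$ depends only on $n_Nb^N + 2n_0 + 2bn'$, where $n' = \sum_{1\leq j<N}n_jb^{j-1}$ runs over \emph{all} integers in $[0,b^{N-1})$; M\"obius inversion then reduces everything to counting an arithmetic progression over a full interval of consecutive integers, with error $O(1)$ per divisor. This yields the stated error term directly, and the restriction to odd divisors that it forces is exactly the source of the $\gamma_2(b)$ correction, with no exponential-sum estimates needed.
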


\begin{proof}
	Every integer $n \in \Pi_b(2N)$ can be written uniquely as 
	\begin{align*}
		n &= n_N b^N + \sum_{0 \leq j < N} n_j \left(b^j + b^{2N-j}\right)\\
		&=
		n_N b^N + 2\sum_{0 \leq j < N} n_j b^j
		+ \sum_{0 \leq j < N} n_j \left( b^{2N-j} - b^j\right)
	\end{align*}	
	for some unique digits $0 \leq n_j < b$ with $n_0 > 0$. Thus $\#\Pi_b(2N) = (b-1)b^{N}$. We note that every term of the right-most sum above is divisible by $b^2-1$. Then 
	$$(n,b^2-1) = \left(n_N b^N + 2\sum_{0 \leq j < N} n_j b^j, \ b^2-1\right).$$ 
	Since $(n,b)=(n_0,b)$ and $b^3-b = b(b^2-1)$, it follows
	\begin{align*}
		\#\Pi_b^*(2N) &= \sum_{\substack{0 \leq n_0, \ldots, n_N < b \\ (n_0,b) = (n_Nb^N + 2\sum_{0 \leq j < N} n_j b^j, b^2-1)=1}} 1\\ 
		&= \sum_{\substack{0 \leq n_0 < b \\ (n_0,b)=1}} \sum_{0 \leq n_N < b} \sum_{\substack{0 \leq n < b^{N-1} \\ (n_N b^N + 2n_0 + 2bn, b^2-1) = 1}}1 \\ 
		&= 
		\sum_{\substack{0 \leq n_0 < b \\ (n_0,b)=1}} \sum_{\substack{0 \leq n_N < b \\  (n_N,2)=1 \text{ if } (b,2)=1}} \sum_{\substack{0 \leq n < b^{N-1} \\ ( n_N b^N + 2n_0 +2bn, b^2-1) = 1}}1.
	\end{align*}
	The equality before the last follows from the unique representation of integers in base $b$.
	The last equality follows from the fact that if $b$ is odd, whence $b^2-1$ is even, then $$\left(n_N b^N + 2n_0 + 2bn, \ b^2-1\right) = 1$$ implies $n_N$ is odd. 
	
	The inner sum equals $S(n_Nb^N + 2n_0)$, where for an integer $a$, 
	$$
	S(a) := \sum_{\substack{0 \leq n < b^{N-1} \\ (a + 2bn, b^2-1) = 1}}1.
	$$
	If $b$ odd and $a$ is even, $S(a) = 0$. Thus we assume $a$ is odd if so is $b$. By the M\"{o}bius inversion formula,
	$$
	S(a) = \sum_{d \mid (b^2-1)} \mu(d)  \sum_{\substack{0 \leq n < b^{N-1} \\ 2bn \equiv -a (d)}}1.
	$$ 
	If $b$ is odd and $2bn \equiv -a(d)$, then $d$ is odd as so is $a$ by assumption. In the case when $b$ is even, $b^2-1$ is odd and so is any divisor $d$ of $b^2-1$. Thus $d$ runs over odd divisors of $b^2-1$ regardless. These are also coprime to $b$ as $(b^2-1,b)=1$. Letting $\overline{2b}$ denote the inverse of $2b$ modulo $d$, the above is 
	\begin{align*}
		S(a) &= \sum_{\substack{d \mid (b^2-1) \\ (d,2)=1}} \mu(d)  \sum_{\substack{0 \leq n < b^{N-1} \\ n \equiv -\overline{2b}a (d)}}1
		= \sum_{\substack{d \mid (b^2-1) \\ (d,2)=1}} \mu(d) \left(\dfrac{b^{N-1}}{d} + O(1)\right)\\
		&= \gamma(b) \dfrac{\varphi(b^2-1)}{b^2-1} b^{N-1}  + O(\tau(b^2-1)) 
	\end{align*}
	uniformly in $a$,
	where
	$$
	\gamma(b) := \begin{cases}
		1 &\mbox{ if $b$ is even}\\
		2 &\mbox{ if $b$ is odd}.
	\end{cases}
	$$ 
	Now (\ref{bound: size of Pi star}) follows after we insert this into the expression for $\#\Pi_b^*(2N)$ above, compute the resulting sum and recall that $\#\Pi_b(2N) = (b-1)b^N$.

	We now establish $\sqrt{x} \ll_b \#\mathscr{P}_b^*(x) \ll_b \sqrt{x}$ for $x \geq 1$. Since $\mathscr{P}_b^*(x) \subseteq \mathscr{P}_b(x) $ and $\#\mathscr{P}_b(x) \asymp_b \sqrt{x}$, the upper bound holds. To show $\#\mathscr{P}_b^*(x) \gg_b \sqrt{x}$, we first note that since $1 \in \mathscr{P}_b^*(x)$, the lower bound holds for $\log_b x$ bounded and we may assume then that $\log_b x 
	$ is arbitrarily large. In this case, let $N$ be the largest integer such that $\Pi_b(2N) \subseteq \mathscr{P}_b(x)$. 
	Clearly $b^N \asymp_b \sqrt{x}$ and $\#\mathscr{P}_b^*(x) \geq \#\Pi_b^*(2N) \asymp_b b^N$ for $N$ large.   
\end{proof}

\begin{proof}[{\bf Proof of Theorem \ref{thm: equidistribution}}]
	Let $y \leq x$. 
	Since every $b$-palindromic integer $n\geq 1$ with $\lfloor \log_b n\rfloor$ odd is divisible by $b+1$, and $b+1 > 1$ is a divisor of $ b^3-b$, it follows
	$$
	\mathscr{P}^*_b(y) = \left\{n \in \mathscr{P}_b^0(y) \ : \ (n,b^3-b)=1\right\}. 
	$$
	Then by this and the M\"{o}bius inversion formula $\mathbf{1}_{(b^3-b,n)=1} = \sum_{r \mid (b^3-b,n)}\mu(r)$, we have
	$$
	\#\mathscr{P}_b^*(y,a,q) - \dfrac{\#\mathscr{P}_b^*(y)}{q} =\sum_{r \mid (b^3-b)} \mu(r) \sum_{\substack{n \in \mathscr{P}_b^0(y) \\ r\mid n}} \left(\mathbf{1}_{n \equiv a (q)} - \dfrac{1}{q}\right). 
	$$	
	Note 
	$$\mathbf{1}_{r \mid n} = \dfrac{1}{r} \sum_{k (r)} e_r(nk)$$ and 
	$$\mathbf{1}_{n \equiv a (q)} - \dfrac{1}{q} = \dfrac{1}{q} \sum_{1 \leq h < q} e_q(-ah)e_q(nh).$$
	Inserting these expressions above, switching orders of summation and taking absolute values, we obtain
	\begin{align*}
		&\left|\#\mathscr{P}_b^*(y,a,q) - \dfrac{\#\mathscr{P}_b^*(y)}{q}\right| \leq \sum_{r \mid (b^3-b)} \dfrac{1}{r}\sum_{k(r)} \dfrac{1}{q}\sum_{1 \leq h < q} \left| \sum_{n \in \mathscr{P}_b^0(y)}e\left(\left(\dfrac{h}{q} + \dfrac{k}{r}\right)n\right)\right|\\
		&\leq b^2 \sum_{0 \leq N \leq \frac{1}{2}\log_b x} \sum_{0 \leq M \leq N} \sum_{r \mid (b^3-b)} \dfrac{1}{r}\sum_{k(r)} \dfrac{1}{q}\sum_{1 \leq h < q}\Phi_M \left(\dfrac{b^{N-M}h}{q} + \dfrac{ b^{N-M}k}{r}\right)
	\end{align*}
	uniformly in $a \in \mathbb{Z}$ and $y \leq x$, by Lemma \ref{lem: bound for exp sum of pals} and a switch in the order of summation. If $(q,b)=1$, we may substitute $b^{N-M}h$ with $h$ above. Thus for any $Q\geq 1$, 
	\begin{align}\label{eqn: average discrepancy}
		&\sum_{\substack{q \leq Q \\ (q,b^3-b)=1}} \sup_{ y \leq x}\max_{a \in \mathbb{Z}} \left|\#\mathscr{P}_b^*(y,a,q) - \dfrac{\#\mathscr{P}_b^*(y)}{q}\right|\nonumber\\
		&\leq 
		b^2 \sum_{0 \leq N \leq \frac{1}{2}\log_b x} \sum_{0 \leq M \leq N} \sum_{r \mid (b^3-b)} \dfrac{1}{r}\sum_{k(r)} \sum_{\substack{q \leq Q \\ (q,b^3-b)=1}}\dfrac{1}{q}\sum_{1 \leq h < q}\Phi_M \left(\dfrac{h}{q} + \dfrac{ b^{N-M}k}{r}\right)\nonumber\\
		&\leq
		b^2 \tau(b^3-b)\sum_{0 \leq N \leq \frac{1}{2}\log_b x} \sum_{0 \leq M \leq N} \max_{\substack{r \mid (b^3-b) \\ k \in \mathbb{Z}}} \sum_{\substack{q \leq Q \\ (q,b^3-b)=1}}\dfrac{1}{q}\sum_{1 \leq h < q}\Phi_M \left(\dfrac{h}{q} + \dfrac{ k}{r}\right)\nonumber\\
		&=
		b^2 \tau(b^3-b)\sum_{0 \leq N \leq \frac{1}{2}\log_b x} \sum_{0 \leq M \leq N} \max_{\substack{ k \in \mathbb{Z}}}S(M,Q,k),
	\end{align}
	where
	$$
	S(M,Q, k) := \sum_{\substack{q \leq Q \\ (q,b^3-b)=1}} \dfrac{1}{q}\sum_{1 \leq h < q} \Phi_M\left(\dfrac{h}{q} + \dfrac{k}{b^3-b}\right). 
	$$
	Splitting the sum according to the GCD of $h,q$ and substituting variables, we have
	$$
	S(M,Q, k) \leq \sum_{d \leq Q/2} \dfrac{1}{d} \sum_{\substack{2 \leq q \leq Q/d \\ (q,b^3-b)=1}} \dfrac{1}{q} \sum_{h(q)}^*\Phi_M\left(\dfrac{h}{q} + \dfrac{k}{b^3-b}\right). 
	$$
	We now split the sum over $2 \leq q \leq Q/d$ into $\ll \log Q/d$ sums over dyadic segments $(R/2, R]$ with $2 \leq R \leq Q/d$. Let $0 < \epsilon \leq 1/15$ and consider the contribution of those $q$ in any one such interval $(R/2, R]$.
	Bounding $1/q$ and applying Proposition \ref{prop: average bound} with $\delta = 2/5 - \epsilon$, we get
	\begin{align*}
		&\sum_{\substack{R/2 < q \leq R \\ (q,b^3-b)=1} }\dfrac{1}{q} \sum_{h(q)}^*\Phi_M\left(\dfrac{h}{q} + \dfrac{k}{b^3-b}\right) \leq \dfrac{2}{R} \sum_{\substack{2 \leq q \leq R \\ (q,b^3-b)=1}} \sum_{h(q)}^*\Phi_M\left(\dfrac{h}{q} + \dfrac{k}{b^3-b}\right)\\
		&\ll_{b,\epsilon} 
		\left(Rb^{(\frac{3}{5} + \epsilon - \sigma_1(b,\epsilon))M} + \dfrac{b^M}{R^{\sigma_1(b,\epsilon)}}\right)\exp\left(-\sigma_\infty(b)\dfrac{M}{\log R}\right)
		\\
		&\leq
		Rb^{(\frac{3}{5} + \epsilon)M} + b^M \exp\left(-c(b,\epsilon)\sqrt{M}\right)
	\end{align*}
	for some $\sigma_1(b,\epsilon),  c(b,\epsilon), \sigma_\infty(b) > 0$ depending at most on $b,\epsilon$. Note we used $e^{-\sigma_1(b,\epsilon)\log R - \sigma_\infty(b)M/\log R} \leq e^{-c(b,\epsilon)\sqrt{M}}$ for $R \geq 2$ and $M\geq 0$. Then the combined contribution of the $\ll \log (Q/d)$ dyadic pieces to the sum over $2 \leq q \leq Q/d$ is 
	$$
	\ll_{b,\epsilon} \dfrac{Q}{d}b^{(\frac{3}{5} + \epsilon )M} + b^M \exp\left(-c(b,\epsilon)\sqrt{M}\right) \log Q.
	$$
	Dividing this by $d$ and summing over $1 \leq d \leq Q/2$ we obtain
	$$
	S(M,Q, k) \ll_{b,\epsilon} Qb^{(\frac{3}{5} + \epsilon )M} + b^M \exp\left(-c(b,\epsilon)\sqrt{M}\right) \log^2 Q
	$$ 
	uniformly in $k \in \mathbb{Z}$. Finally summing over $M,N$ on the right hand side of (\ref{eqn: average discrepancy}) yields, for any $1 \leq Q \leq x^{1/5}$ and any $0 < \epsilon \leq 1/15$, 
	$$
	\sum_{\substack{q \leq Q \\ (q,b^3-b)=1}} \sup_{ y \leq x} \max_{a \in \mathbb{Z}} \left|\#\mathscr{P}_b^*(y,a,q) - \dfrac{\#\mathscr{P}_b^*(y)}{q}\right| \ll_{b,\epsilon} Qx^{\frac{3}{10} + \frac{\epsilon}{2}} + \sqrt{x} e^{-k_{b,\epsilon} \sqrt{\log x}},
	$$
	where $k_{b,\epsilon} > 0$ is some value depending only on $b,\epsilon$. Since the above holds for $0 < \epsilon \leq 1/15$, it also holds for $\epsilon > 1/15$. Now the result follows when we let $Q = x^{1/5 - \epsilon}$ and use the fact $\sqrt{x} \ll_b \#\mathscr{P}_b^*(x)$ from Lemma \ref{lem: number of coprimes}. 
\end{proof}	 

\section{Equidistribution with square moduli}\label{sec: square}

Before we can prove Theorem \ref{thm: at most 6 primes} we also need Proposition \ref{prop: square equidistribution} below.
Its proof is a direct application of the estimate of Baier-Zhao \cite{Baier-Zhao} for the large sieve with square moduli (see Lemma \ref{lem: Baier-Zhao} below) and the $L^\infty$-type bound of Col \cite{Col} in Proposition \ref{prop: Linfty bound}. Most definitely, it is possible to replace the $\log^A x$ below with something such as $\exp(k_{b,\epsilon} \sqrt{\log x})$ using Col's bound in Proposition \ref{prop: Linfty bound} and our argument in the proof. Nevertheless this is more than sufficient for our purposes. 

\begin{prop}\label{prop: square equidistribution}
	For any $x \geq 2$ and $\epsilon_0 > 0$,
	$$
	\sum_{\substack{q \leq x^{1/4-\epsilon_0} \\ (q,b^3-b)=1}}\mu^2(q) \sup_{y \leq x}\max_{a \in \mathbb{Z}} \left| \sum_{n\in \mathscr{P}_b^*(y)} \left(\mathbf{1}_{n \equiv a (q^2)} - \dfrac{1}{q^2}\right)\right| \ll_{A,b,\epsilon_0} \dfrac{\#\mathscr{P}_b^*(x)}{\log^A x}
	$$
	for any $A > 0$.
\end{prop}

We need the following two lemmas. The first is due to Baier-Zhao \cite{Baier-Zhao} and Lemma \ref{lem: average of Phi with square moduli} below is a direct application of it. 

\begin{lem}[Baier-Zhao \citelist{\cite{Baier-Zhao}, Theorem 1}]\label{lem: Baier-Zhao}
For any integers $M,Q,N$ with $Q,N \geq 1$ and any sequence $(\gamma_n)$ of complex numbers,
$$
\sum_{q \leq Q}\sum_{a(q^2)}^* \left|\sum_{M < n \leq M+N} \gamma_n e_{q^2}(a n)\right|^2 \ll_\epsilon \mathscr{K}_\epsilon(Q,N) \sum_{M <n \leq M+N}|\gamma_n|^2
$$
for any $\epsilon > 0$, where
$$
\mathscr{K}_\epsilon(Q,N) = (QN)^\epsilon \left(Q^3 + N + \min\left\{N\sqrt{Q}, \sqrt{N}Q^2\right\}\right).
$$	
\end{lem}

\begin{lem}\label{lem: average of Phi with square moduli}
For any $Q \geq 1$ and integer $N \geq 0$,
\begin{equation}\label{eqn: L2 bound with square moduli}
\sup_{\beta \in \mathbb{R}} \sum_{\substack{q \leq Q \\ (q,b)=1}} \sum_{a(q^2)}^* \Phi_N \left(\dfrac{a}{q^2} + \beta\right) \ll_{b,\epsilon} Q^{3 + \epsilon}b^{N/2} + Q^{15/8 + \epsilon} b^N
\end{equation}
for any $\epsilon > 0$.	
\end{lem}

\begin{proof}
We may assume that $N$ is arbitrarily large and set $M = \lfloor 4N/9 \rfloor > 1$.
Let us first consider the case when $Q \gg_b b^M$. Here we split the product as $\Phi_N = P_1 P_2$, where $P_1$ is the product over $1 \leq n \leq M$ and $P_2$ is the product over $M < n < N$. By the Cauchy-Schwarz inequality, the left hand side above is
$$
\leq \left(\sum_{q\leq Q} \sum_{a(q^2)}P_1^2\left(\dfrac{a}{q^2} + \beta\right)\right)^{1/2}\left(\sum_{\substack{q \leq Q \\ (q,b)=1}}\sum_{a (q^2)}^* P_2^2\left(\dfrac{a}{q^2} + \beta\right)\right)^{1/2}.
$$
Arguing similarly as done in the proof of Lemma \ref{lem: L2 bound} and using the assumption $Q \gg_b b^M$, one can show that
$$
\sum_{q\leq Q} \sum_{a(q^2)}P_1^2\left(\dfrac{a}{q^2} + \beta\right) \ll_\epsilon Q^{3+\epsilon}b^M
$$
for any $\epsilon > 0$. With regards to the second sum, as we have seen before,
$$
\sum_{\substack{q \leq Q \\ (q,b)=1}}\sum_{a (q^2)}^* P_2^2\left(\dfrac{a}{q^2} + \beta\right) = \sum_{\substack{q \leq Q \\ (q,b)=1}}\sum_{a (q^2)}^* \Phi_{N-M}^2\left(\dfrac{a}{q^2} + \beta b^M\right).
$$
We may write 
$$
\Phi_{N-M}\left(\dfrac{a}{q^2} + \beta b^M\right) = \left|\sum_{0 \leq n \leq b^{2(N-M)}}\gamma_n e_{q^2}(an)\right|,
$$
where the coefficients $\gamma_n$ are given by 
$$
\gamma_n = e\left(\beta b^M n\right)\mathbf{1}_{n \in \mathcal{S}}
$$
with
$$
\mathcal{S} = \left\{ \sum_{1 \leq m < N-M} c_m \left(b^m + b^{2(N-M) -m}\right) \ : \ 0 \leq c_1, \ldots, c_{N-M-1} < b\right\}.
$$
Then it follows from Lemma \ref{lem: Baier-Zhao} that
$$
\sum_{\substack{q \leq Q \\ (q,b)=1}}\sum_{a (q^2)}^* P_2^2\left(\dfrac{a}{q^2} + \beta\right) \ll_{\epsilon}\mathscr{K}_\epsilon\left(Q, b^{2(N-M)}\right) b^{N-M}.
$$
From the definition of $\mathscr{K}_\epsilon$ in Lemma \ref{lem: Baier-Zhao} and the assumption $Q \gg_b b^M$ with $M = \lfloor 4N/9 \rfloor$, one can show that
$$
\mathscr{K}_\epsilon\left(Q, b^{2(N-M)}\right) \ll_\epsilon Q^{\epsilon} \left(Q^3 + b^{2(N-M)}\sqrt{Q}\right) \ll_b Q^{3 + \epsilon}. 
$$
Now (\ref{eqn: L2 bound with square moduli}) follows after multiplying the two bounds involving $P_1,P_2$ and taking square roots. 

Consider now the case when $Q \ll_b b^M$. Here we may find an integer $L > 1$ such that $Q \asymp_b b^{4(N-L)/9}$. As done in a previous section, we split the product $\Phi_N$ according to whether $n \leq L$ or $L < n < N$. Bounding the product over $1 \leq n \leq L$ trivially by $b^L$, noting that 
\begin{align*}
\mathscr{Z} &:= \sum_{\substack{q \leq Q \\ (q,b)=1}}\sum_{a(q^2)}^*\prod_{L < n < N} \phi_b\left(\left(\dfrac{a}{q^2} + \beta\right)\left(b^n + b^{2N-n}\right)\right)\\ &= \sum_{\substack{q \leq Q \\ (q,b)=1}}\sum_{a(q^2)}^* \Phi_{N-L} \left(\dfrac{a}{q^2} + b^L \beta\right)
\end{align*}
and using (\ref{eqn: L2 bound with square moduli}) on $\mathscr{Z}$ with $N$ there substituted with $N-L$ (we may do this since now $Q \asymp_b b^{4(N-L)/9}$ by our assumption) we obtain
$$
\sum_{\substack{q \leq Q \\ (q,b)=1}} \sum_{a(q^2)}^* \Phi_N \left(\dfrac{a}{q^2} + \beta\right) \ll_{b,\epsilon} Q^{3+\epsilon} b^{\frac{N+L}{2}} + Q^{15/8 + \epsilon} b^N.
$$
The assumption $Q \asymp_b b^{4(N-L)/9}$ implies $b^{L/2} \asymp_b b^{N/2}/Q^{9/8}$, whence the left-most term, of the right hand side above, is $\ll_b Q^{15/8+\epsilon} b^N$. It follows that (\ref{eqn: L2 bound with square moduli}) also holds when $Q \ll_b b^M$.
\end{proof}	

\begin{proof}[{\bf Proof of Proposition \ref{prop: square equidistribution}}]
	For the sake of brevity let $U = x^{1/4-\epsilon_0}$ and let $E(U)$ denote the left hand side of the inequality in Proposition \ref{prop: square equidistribution}. Now
	the Fourier expansion 
	$$
	\mathbf{1}_{n \equiv a (q^2)} = \dfrac{1}{q^2} \sum_{h (q^2)}e\left(\dfrac{-ah}{q^2}\right) e\left(\dfrac{nh}{q^2}\right)
	$$
	gives
	$$
	E(U) \leq \sum_{\substack{q \leq U \\ (q,b^3-b)=1}} \dfrac{\mu^2(q)}{q^2} \sum_{1 \leq h < q^2} \sup_{y\leq x}\left|\sum_{n \in \mathscr{P}_b^*(y)}e\left(\dfrac{hn}{q^2}\right)\right|.
	$$
	Splitting the sum according to the GCD of $h,q$ and substituting variables gives
	$$
	E(U) \leq \sum_{\substack{qr \leq U \\ (qr,b^3-b)=1}} \dfrac{\mu^2(qr)}{q^2r^2} \sum_{\substack{1 \leq h < q^2 r \\ (h,q)=1}} \sup_{y\leq x} \left|\sum_{n \in \mathscr{P}_b^*(y)}e\left(\dfrac{hn}{q^2 r}\right)\right|.
	$$
	We now split the last sum according to the GCD of $h,r$ and substitute variables as before, obtaining
	$$
	E(U) \leq \sum_{\substack{dqr \leq U \\ qr > 1\\ (dqr,b^3-b)=1}} \dfrac{\mu^2(dqr)}{d^2q^2r^2} \sum_{\substack{h( q^2 r)}}^* \sup_{y\leq x}\left|\sum_{n \in \mathscr{P}_b^*(y)}e\left(\dfrac{hn}{q^2 r}\right)\right|.
	$$
	By the fact
	$$\mathscr{P}_b^*(y) = \{n \in \mathscr{P}_b^0(y) \ : \ (n,b^3-b)=1\},$$ 
	the M\"{o}bius inversion formula $\mathbf{1}_{(n,b^3-b)=1} = \sum_{s \mid (n,b^3-b)}\mu(s)$, the Fourier expansion $\mathbf{1}_{s \mid n} = \frac{1}{s} \sum_{k (s)} e_s(nk)$ and a trivial bound for the $d$-sum, it follows
	$$
	E(U) \ll \tau(b^3-b)\max_{ k \in \mathbb{Z}} \sum_{\substack{1 < qr \leq U \\ (qr,b^3-b)=(q,r)=1}} \dfrac{1}{q^2r^2} \sum_{\substack{h( q^2 r)}}^*\sup_{y\leq x} \left|\sum_{n \in \mathscr{P}_b^0(y)}e\left(\dfrac{hn}{q^2 r} + \dfrac{k n}{b^3-b}\right) \right|.
	$$
	By Lemma \ref{lem: bound for exp sum of pals}, the Chinese remainder theorem and dyadic decompositions of the intervals, we then have
	$$
	E(U) \ll_b (\log x)^4 \sup_{\substack{QR \ll U \\ 0 \leq N \leq \frac{1}{2}\log_b x \\ k \in \mathbb{Z}}} \mathscr{T}(N, Q,R,k) ,
	$$
	where
	$$
	\mathscr{T}(N, Q,R,k) = \dfrac{1}{Q^2 R^2}\sum_{\substack{ q \leq Q \\ r \leq R \\ qr > 1 \\ (qr,b^3-b)=(q,r)=1}} \sum_{g (q^2)}^* \sum_{h (r)}^*\Phi_N\left(\dfrac{g}{q^2} +  \dfrac{h}{r} + \dfrac{k}{b^3-b}\right).
	$$
	Let $B > 1000$ and $ C > 1000B$. We bound $\mathscr{T}(N, Q,R,k)$ according to the following three cases of $Q,R$.  
	
	{\bf Case 1} ($Q \ll \log^B x$ and $R \ll \log^C x$): Here Proposition \ref{prop: Linfty bound} implies 
	$$\mathscr{T}(N, Q,R,k) \ll_{b,C,D} \sqrt{x} \log^{-D}x$$ for any $D > 0$. 
	
	{\bf Case 2} ($Q \ll \log^B x$ and $R \gg \log^C x$): Taking the worst possible value of $g/q^2$ and applying Proposition \ref{prop: average bound} with $\delta = \frac{2}{5} -\epsilon$ there, one has
	\begin{align*}
	\mathscr{T}(N, Q,R,k) &\leq \dfrac{Q}{R^2} \sup_{\beta \in \mathbb{R}} \sum_{\substack{r \leq R \\ (r,b)=1}} \sum_{h (r)}^*\Phi_N\left( \dfrac{h}{r} + \beta\right) \ll_{b,\epsilon} Q x^{\frac{3}{10} + \epsilon} + \dfrac{Q\sqrt{x}}{R}\\
	&\ll_C \dfrac{\sqrt{x}}{\log^{C-B} x} 
	\end{align*}
	for $\epsilon > 0$ small and fixed.
	
	\begin{rmk}
	We employed Proposition \ref{prop: average bound} for the sake of convenience, but it is unnecessary here. It suffices instead to proceed via a Cauchy-Schwarz argument combined with Lemma \ref{lem: L2 bound}.
	\end{rmk} 

{\bf Case 3} ($Q \gg \log^B x$): Note
\begin{equation}\label{eqn: case 3}
\mathscr{T}(N, Q,R,k) \leq  \dfrac{1}{Q^2} \sup_{\beta \in \mathbb{R}} \sum_{\substack{q \leq Q \\ (q,b)=1}} \sum_{g (q^2)}^* \Phi_N\left(\dfrac{g}{q^2} + \beta\right)
\end{equation} 
while Lemma \ref{lem: average of Phi with square moduli} and the assumptions $2N \leq \log_b x$ and $\log^B x \ll Q \ll U = x^{1/4-\epsilon_0}$ imply this is
$$
\ll_{b,\epsilon} Q^{1+\epsilon} x^{1/4} + \dfrac{\sqrt{x}}{Q^{\frac{1}{8}-\epsilon}} \ll_{ B, \epsilon_0} \sqrt{x}\log^{-\frac{B}{8} + 1} x
$$ 
after a suitable choice of $\epsilon$ in terms of $B$ and $\epsilon_0$. Now Proposition \ref{prop: square equidistribution} follows after choosing $B,C,D$ sufficiently large in terms of $A$. 	
\end{proof}

\section{Palindromic almost-primes}\label{sec: final proof}

We conclude the work with a proof of Theorem \ref{thm: at most 6 primes}.
To this end we use the following version of the almost-prime sieve. 

\begin{lem}[Almost-prime linear sieve with Richert's weights]\label{lem: sieve lemma}
	Let $\mathcal{A}$ be a finite sequence of integers in $[1,x]$.
	For each integer $d \geq 1$, set
	$$
	A_d := \#\left\{n \in \mathcal{A} \ : \ d \mid n\right\}.
	$$
	Suppose for each such $d$ that 
	$$
	A_d = Xg(d) + r_d
	$$
	for some $r_d$, real number $X$ and some multiplicative function $g$ satisfying $0 \leq g(p) < 1$ for each prime $p$ and
	$$
	\prod_{u \leq p < v} (1 - g(p))^{-1} \leq K\dfrac{\log v}{\log u}
	$$
	for any $2 \leq u < v \leq x$, where $K > 1$ is some constant. 
	
	For an integer $r \geq 2$, set 
	$$
	u_r := 1 + 3^{-r}
	$$
	and
	$$
	\Delta_r := r + \log_3\left(\dfrac{3u_r}{4}\right).
	$$
	
	Suppose that 
	$$
	\sum_{d \leq D} |r_d| \ll \dfrac{X}{\log^3 x}
	$$
	and 
	$$
	\sum_{D^{1/4}  \leq p \leq D^{1/u_r}} A_{p^2} \ll \dfrac{X}{\log^3 x}
	$$ 
	for some $D$ satisfying  $D \geq x^{1/\Delta_r + \epsilon}$ for some $\epsilon > 0$. Then
	$$
	\sum_{\substack{n \in \mathcal{A}\\ P^-(n) \geq D^{1/4} \\ \Omega(n) \leq r}}1 \asymp_{r,\epsilon} X \prod_{p < x}(1 - g(p)).
	$$
\end{lem}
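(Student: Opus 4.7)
The plan is to derive Lemma \ref{lem: sieve lemma} by combining the Jurkat--Richert linear sieve (applicable in sieve dimension $1$ by virtue of the hypothesis $\prod_{u\leq p<v}(1-g(p))^{-1}\leq K\log v/\log u$) with Richert's weighted sieve device. The upper bound $\ll_{r,\epsilon} X\prod_{p<x}(1-g(p))$ is the routine direction: setting $z = D^{1/4}$ and applying the upper bound linear sieve to the sequence $(a_n)_{n\leq x}$ at sifting level $z$ yields
\begin{equation*}
S(\mathcal{A},z) := \sum_{\substack{n\leq x\\ P^-(n)\geq z}} a_n \;\leq\; XV(z)\bigl(F(4) + o(1)\bigr) + O\Bigl(\sum_{d\leq D}|r_d|\Bigr),
\end{equation*}
with $V(z) := \prod_{p<z}(1-g(p))$ and $F$ the upper linear sieve function. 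The remainder term is $\ll X/\log^3 x$ by hypothesis, and the product condition applied with $u=z$, $v=x$ together with $\log x/\log D \leq \Delta_r$ gives $V(z) \asymp_r \prod_{p<x}(1-g(p))$. Since the sum in the lemma is bounded above by $S(\mathcal{A},z)$, the $\ll$ direction follows.

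For the lower bound we use Richert's weighted sieve. For parameters $y \in (z,x^{1/2}]$ and $\lambda>0$ to be chosen in terms of $r$ and $\epsilon$, define the weight
\begin{equation*}
W(n) := 1 - \lambda \sum_{\substack{z\leq p<y\\ p\mid n}}\Bigl(1-\frac{\log p}{\log y}\Bigr).
\end{equation*}
Richert's classical argument shows that, with the sharp choice $y = x^{1/\Delta_r + \epsilon'}$ for a suitable $\epsilon'<\epsilon$ and corresponding $\lambda = \lambda_r$, one has $W(n) \leq 0$ whenever $n \leq x$, $P^-(n) \geq z$ and $\Omega(n) \geq r+1$; the verification splits the prime factors of $n$ into those in $[z,y)$ and those $\geq y$, and uses $n\leq x$ to bound the logarithmic contribution from the large primes. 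Consequently
\begin{equation*}
\sum_{\substack{n\leq x\\ P^-(n)\geq z\\ \Omega(n)\leq r}} a_n \;\geq\; \sum_{\substack{n\leq x\\ P^-(n)\geq z}} a_n\,W(n) \;=\; S(\mathcal{A},z) - \lambda\!\!\sum_{z\leq p<y}\!\Bigl(1-\frac{\log p}{\log y}\Bigr) S(\mathcal{A}_p,z),
\end{equation*}
where $\mathcal{A}_p := \{m \leq x/p : mp \in \mathcal{A}\}$.

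The main obstacle is showing that the right-hand side is bounded below by a positive multiple of $X\prod_{p<x}(1-g(p))$. To this end we apply the lower bound linear sieve to $S(\mathcal{A},z)$, giving $\geq XV(z)(f(4)+o(1))$ with $f(4)>0$, and the upper bound linear sieve to each $S(\mathcal{A}_p,z)$ (with modified density $g_p(d):=g(pd)/g(p)$), giving a main term of order $(X/p)V(z)\bigl(F(\log(D/p)/\log z)+o(1)\bigr)$ plus remainders at moduli $\leq D/p\leq D$. The hypothesis $\sum_{d\leq D}|r_d|\ll X/\log^3 x$ absorbs all these error contributions after summation over $p<y$. Converting the sum over $p$ to an integral via Mertens' theorem (in the form dictated by the product hypothesis on $g$), the bound becomes $XV(z)$ times an explicit $r$-dependent expression involving $f(4)$ and integrals of $F$ against the Richert weight $1-\log t/\log y$. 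The quantity $\Delta_r=r+(\log 3)^{-1}\log(\tfrac{3}{4}(1+3^{-r}))$ is precisely the threshold beyond which $(\lambda,y)$ can be tuned to make this expression strictly positive; the occurrence of $\log 3$ reflects the evaluation of $F$ at an optimum point arising in the classical analysis of Richert's weights. The hypothesis $D\geq x^{1/\Delta_r + \epsilon}$ is therefore exactly what the optimization demands, and it yields $\sum_{n\leq x,\,P^-(n)\geq z,\,\Omega(n)\leq r}a_n \gg_{r,\epsilon} XV(z) \asymp_r X\prod_{p<x}(1-g(p))$, matching the upper bound and completing the proof.
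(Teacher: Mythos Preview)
The paper does not prove this lemma at all: its entire proof is a citation to Chapter~25 of Friedlander--Iwaniec's \emph{Opera de Cribro} (specifically Theorem~25.1 and equations~25.23, 25.25 there). Your sketch is essentially an outline of the argument carried out in that reference---the upper bound from the Jurkat--Richert linear sieve and the lower bound from Richert's weighted sieve, with the threshold $\Delta_r$ emerging from optimizing the weight parameters---so you are aligned with the cited source, though there is no in-paper proof to compare against. A couple of your parameter choices (e.g.\ the precise specification of $y$ and $\lambda$, and the exact verification that $W(n)\le 0$ for $\Omega(n)\ge r+1$) are left somewhat loose, but these are exactly the details filled in by the reference the paper invokes.
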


\begin{proof}
	For the lower bound see for instance Corollaries 1.1 and 1.2 in Section 5 of Greaves \cite{Greaves}. For the upper bound see Corollary 1.1 in Section 1 of \cite{Greaves}.
\end{proof}	

\begin{proof}[{\bf Proof of Theorem \ref{thm: at most 6 primes}}]
	We may assume that $x \gg_b 1$ is large enough so that $z := x^{1/21}$ is much larger than $b^3-b$. 
	In this case, 
	$$
	S := \sum_{\substack{n \in \mathscr{P}_b(x) \\ P^-(n) \geq z\\ \Omega(n)\leq 6}} 1=
	\sum_{\substack{n \in \mathscr{P}_b^*(x) \\ P^-(n) \geq z \\ \Omega(n)\leq 6}} 1.
	$$	
	For any integer $d \geq 1$, set
	$$
	A_d := \sum_{\substack{n \in \mathscr{P}_b^*(x) \\ d \mid n }} 1 = \#\mathscr{P}_b^*(x,0,d).  
	$$ 
	We can write this as 
	$$
	A_d = g(d)\#\mathscr{P}_b^*(x) +r_d, 
	$$
	where
	$$
	g(d) = \dfrac{\mathbf{1}_{(d,b^3-b)=1}}{d}
	$$
	and
	$$
	r_d =  \#\mathscr{P}_b^*(x,0,d) - g(d) \#\mathscr{P}_b^*(x).
	$$
	Clearly $g$ is multiplicative, satisfies $0 \leq g(p) < 1$ for each prime $p$ and 
	$$
	\prod_{u \leq p < v} (1 - g(p))^{-1} \leq \prod_{u \leq p < v} \left(1 - \dfrac{1}{p}\right)^{-1} = \dfrac{\log v}{\log u}\left(1 + O\left(\dfrac{1}{\log u}\right)\right)
	$$
	for any $2 \leq u < v \leq x$,
	by the Mertens' theorems. 
	
	Set $D := z^4 = x^{4/21} = x^{1/5 - 1/105}$. Since every number in $\mathscr{P}_b^*$ is coprime to $b^3-b$, 
	$$
	\sum_{d \leq D}|r_d| = \sum_{\substack{d \leq D \\ (d,b^3-b)=1}} \left|\#\mathscr{P}_b^*(x,0,d) - \dfrac{ \#\mathscr{P}_b^*(x)}{d}\right| \ll_{b}  \dfrac{\#\mathscr{P}_b^*(x)}{\log^{10}x} 
	$$ 
	by Theorem \ref{thm: equidistribution}. It also follows from Proposition \ref{prop: square equidistribution} that
	$$
	\sum_{D^{1/4} \leq p \leq D} A_{p^2} \ll_b \dfrac{\#\mathscr{P}_b^*(x)}{\log^{10} x}.
	$$
	For $\Delta_6$ defined as in Lemma \ref{lem: sieve lemma}, one can check that $x^{1/\Delta_6 + 1/100} \leq D$. Then all the assumptions of Lemma \ref{lem: sieve lemma} are satisfied. It gives
	$$
	S \asymp_{b} \#\mathscr{P}_b^*(x) \prod_{p < x} \left(1 - g(p)\right) \asymp_b \dfrac{ \#\mathscr{P}_b(x)}{\log x}
	$$
	as required. 
\end{proof}

%%%%%%%%%%%%%%%%%%%%%%%%%%%%%%%%%%%%%%%%%%%%%%%%%%%%%%%%%%%%%%%%%%%%%%%%%%%%%%%%%%%%%%%%%%%%%%%%%%%%%%%%%%%%%%%%%%%%%%%%%%%%%%%%%%%%%%%%%%%%%%%%%%

\end{document}